\newskip\Einheit \Einheit=0.6cm
\newdimen\xdim \newdimen\ydim \newdimen\PfadD@cke \newdimen\Pfadd@cke
\def\PfadDicke#1{\PfadD@cke#1 \divide\PfadD@cke by2 \Pfadd@cke\PfadD@cke \multiply\PfadD@cke by2}
\long\def\LOOP#1\REPEAT{\def\BODY{#1}\ITERATE}
\def\ITERATE{\BODY \let\next\ITERATE \else\let\next\relax\fi \next}
\let\REPEAT=\fi
\def\Punkt{\hbox{\raise-2pt\hbox to0pt{\hss\scriptsize$\bullet$\hss}}}
\def\DuennPunkt(#1,#2){\unskip
  \raise#2 \Einheit\hbox to0pt{\hskip#1 \Einheit
          \raise-2.5pt\hbox to0pt{\hss\normalsize$\bullet$\hss}\hss}}
\def\NormalPunkt(#1,#2){\unskip
  \raise#2 \Einheit\hbox to0pt{\hskip#1 \Einheit
          \raise-3pt\hbox to0pt{\hss\large$\bullet$\hss}\hss}}
\def\DickPunkt(#1,#2){\unskip
  \raise#2 \Einheit\hbox to0pt{\hskip#1 \Einheit
          \raise-4pt\hbox to0pt{\hss\Large$\bullet$\hss}\hss}}
\def\Kreis(#1,#2){\unskip
  \raise#2 \Einheit\hbox to0pt{\hskip#1 \Einheit
          \raise-4pt\hbox to0pt{\hss\Large$\circ$\hss}\hss}}
\def\Diagonale(#1,#2)#3{\unskip\leavevmode
  \xcoord#1\relax \ycoord#2\relax
      \raise\ycoord \Einheit\hbox to0pt{\hskip\xcoord \Einheit
         \unitlength\Einheit
         \line(1,1){#3}\hss}}
\def\AntiDiagonale(#1,#2)#3{\unskip\leavevmode
  \xcoord#1\relax \ycoord#2\relax 
      \raise\ycoord \Einheit\hbox to0pt{\hskip\xcoord \Einheit
         \unitlength\Einheit
         \line(1,-1){#3}\hss}}
\def\Pfad(#1,#2),#3\endPfad{\unskip\leavevmode
  \xcoord#1 \ycoord#2 \thicklines\ZeichnePfad#3\endPfad\thinlines}
\def\ZeichnePfad#1{\ifx#1\endPfad\let\next\relax
  \else\let\next\ZeichnePfad
    \ifnum#1=1
      \raise\ycoord \Einheit\hbox to0pt{\hskip\xcoord \Einheit
         \vrule height\Pfadd@cke width1 \Einheit depth\Pfadd@cke\hss}%
      \advance\xcoord by 1
    \else\ifnum#1=2
      \raise\ycoord \Einheit\hbox to0pt{\hskip\xcoord \Einheit
        \hbox{\hskip-\PfadD@cke\vrule height1 \Einheit width\PfadD@cke depth0pt}\hss}%
      \advance\ycoord by 1
    \else\ifnum#1=3
      \raise\ycoord \Einheit\hbox to0pt{\hskip\xcoord \Einheit
         \unitlength\Einheit
         \line(1,1){1}\hss}
      \advance\xcoord by 1
      \advance\ycoord by 1
    \else\ifnum#1=4
      \raise\ycoord \Einheit\hbox to0pt{\hskip\xcoord \Einheit
         \unitlength\Einheit
         \line(1,-1){1}\hss}
      \advance\xcoord by 1
      \advance\ycoord by -1
    \else\ifnum#1=5
      \advance\xcoord by -1
      \raise\ycoord \Einheit\hbox to0pt{\hskip\xcoord \Einheit
         \vrule height\Pfadd@cke width1 \Einheit depth\Pfadd@cke\hss}%
    \else\ifnum#1=6
      \advance\ycoord by -1
      \raise\ycoord \Einheit\hbox to0pt{\hskip\xcoord \Einheit
        \hbox{\hskip-\PfadD@cke\vrule height1 \Einheit width\PfadD@cke depth0pt}\hss}%
    \else\ifnum#1=7
      \advance\xcoord by -1
      \advance\ycoord by -1
      \raise\ycoord \Einheit\hbox to0pt{\hskip\xcoord \Einheit
         \unitlength\Einheit
         \line(1,1){1}\hss}
    \else\ifnum#1=8
      \advance\xcoord by -1
      \advance\ycoord by +1
      \raise\ycoord \Einheit\hbox to0pt{\hskip\xcoord \Einheit
         \unitlength\Einheit
         \line(1,-1){1}\hss}
    \fi\fi\fi\fi
    \fi\fi\fi\fi
  \fi\next}
\def\hSSchritt{\leavevmode\raise-.4pt\hbox to0pt{\hss.\hss}\hskip.2\Einheit
  \raise-.4pt\hbox to0pt{\hss.\hss}\hskip.2\Einheit
  \raise-.4pt\hbox to0pt{\hss.\hss}\hskip.2\Einheit
  \raise-.4pt\hbox to0pt{\hss.\hss}\hskip.2\Einheit
  \raise-.4pt\hbox to0pt{\hss.\hss}\hskip.2\Einheit}
\def\vSSchritt{\vbox{\baselineskip.2\Einheit\lineskiplimit0pt
\hbox{.}\hbox{.}\hbox{.}\hbox{.}\hbox{.}}}
\def\DSSchritt{\leavevmode\raise-.4pt\hbox to0pt{%
  \hbox to0pt{\hss.\hss}\hskip.2\Einheit
  \raise.2\Einheit\hbox to0pt{\hss.\hss}\hskip.2\Einheit
  \raise.4\Einheit\hbox to0pt{\hss.\hss}\hskip.2\Einheit
  \raise.6\Einheit\hbox to0pt{\hss.\hss}\hskip.2\Einheit
  \raise.8\Einheit\hbox to0pt{\hss.\hss}\hss}}
\def\dSSchritt{\leavevmode\raise-.4pt\hbox to0pt{%
  \hbox to0pt{\hss.\hss}\hskip.2\Einheit
  \raise-.2\Einheit\hbox to0pt{\hss.\hss}\hskip.2\Einheit
  \raise-.4\Einheit\hbox to0pt{\hss.\hss}\hskip.2\Einheit
  \raise-.6\Einheit\hbox to0pt{\hss.\hss}\hskip.2\Einheit
  \raise-.8\Einheit\hbox to0pt{\hss.\hss}\hss}}
\def\SPfad(#1,#2),#3\endSPfad{\unskip\leavevmode
  \xcoord#1 \ycoord#2 \ZeichneSPfad#3\endSPfad}
\def\ZeichneSPfad#1{\ifx#1\endSPfad\let\next\relax
  \else\let\next\ZeichneSPfad
    \ifnum#1=1
      \raise\ycoord \Einheit\hbox to0pt{\hskip\xcoord \Einheit
         \hSSchritt\hss}%
      \advance\xcoord by 1
    \else\ifnum#1=2
      \raise\ycoord \Einheit\hbox to0pt{\hskip\xcoord \Einheit
        \hbox{\hskip-2pt \vSSchritt}\hss}%
      \advance\ycoord by 1
    \else\ifnum#1=3
      \raise\ycoord \Einheit\hbox to0pt{\hskip\xcoord \Einheit
         \DSSchritt\hss}
      \advance\xcoord by 1
      \advance\ycoord by 1
    \else\ifnum#1=4
      \raise\ycoord \Einheit\hbox to0pt{\hskip\xcoord \Einheit
         \dSSchritt\hss}
      \advance\xcoord by 1
      \advance\ycoord by -1
    \else\ifnum#1=5
      \advance\xcoord by -1
      \raise\ycoord \Einheit\hbox to0pt{\hskip\xcoord \Einheit
         \hSSchritt\hss}%
    \else\ifnum#1=6
      \advance\ycoord by -1
      \raise\ycoord \Einheit\hbox to0pt{\hskip\xcoord \Einheit
        \hbox{\hskip-2pt \vSSchritt}\hss}%
    \else\ifnum#1=7
      \advance\xcoord by -1
      \advance\ycoord by -1
      \raise\ycoord \Einheit\hbox to0pt{\hskip\xcoord \Einheit
         \DSSchritt\hss}
    \else\ifnum#1=8
      \advance\xcoord by -1
      \advance\ycoord by 1
      \raise\ycoord \Einheit\hbox to0pt{\hskip\xcoord \Einheit
         \dSSchritt\hss}
    \fi\fi\fi\fi
    \fi\fi\fi\fi
  \fi\next}
\def\Koordinatenachsen(#1,#2){\unskip
 \hbox to0pt{\hskip-.5pt\vrule height#2 \Einheit width.5pt depth1 \Einheit}%
 \hbox to0pt{\hskip-1 \Einheit \xcoord#1 \advance\xcoord by1
    \vrule height0.25pt width\xcoord \Einheit depth0.25pt\hss}}
\def\Koordinatenachsen(#1,#2)(#3,#4){\unskip
 \hbox to0pt{\hskip-.5pt \ycoord-#4 \advance\ycoord by1
    \vrule height#2 \Einheit width.5pt depth\ycoord \Einheit}%
 \hbox to0pt{\hskip-1 \Einheit \hskip#3\Einheit 
    \xcoord#1 \advance\xcoord by1 \advance\xcoord by-#3 
    \vrule height0.25pt width\xcoord \Einheit depth0.25pt\hss}}
\def\Gitter(#1,#2){\unskip \xcoord0 \ycoord0 \leavevmode
  \LOOP\ifnum\ycoord<#2
    \loop\ifnum\xcoord<#1
      \raise\ycoord \Einheit\hbox to0pt{\hskip\xcoord \Einheit\Punkt\hss}%
      \advance\xcoord by1
    \repeat
    \xcoord0
    \advance\ycoord by1
  \REPEAT}
\def\Gitter(#1,#2)(#3,#4){\unskip \xcoord#3 \ycoord#4 \leavevmode
  \LOOP\ifnum\ycoord<#2
    \loop\ifnum\xcoord<#1
      \raise\ycoord \Einheit\hbox to0pt{\hskip\xcoord \Einheit\Punkt\hss}%
      \advance\xcoord by1
    \repeat
    \xcoord#3
    \advance\ycoord by1
  \REPEAT}
\def\Label#1#2(#3,#4){\unskip \xdim#3 \Einheit \ydim#4 \Einheit
  \def\lo{\advance\xdim by-.5 \Einheit \advance\ydim by.5 \Einheit}%
  \def\llo{\advance\xdim by-.25cm \advance\ydim by.5 \Einheit}%
  \def\loo{\advance\xdim by-.5 \Einheit \advance\ydim by.25cm}%
  \def\o{\advance\ydim by.25cm}%
  \def\ro{\advance\xdim by.5 \Einheit \advance\ydim by.5 \Einheit}%
  \def\rro{\advance\xdim by.25cm \advance\ydim by.5 \Einheit}%
  \def\roo{\advance\xdim by.5 \Einheit \advance\ydim by.25cm}%
  \def\l{\advance\xdim by-.30cm}%
  \def\r{\advance\xdim by.30cm}%
  \def\lu{\advance\xdim by-.5 \Einheit \advance\ydim by-.6 \Einheit}%
  \def\llu{\advance\xdim by-.25cm \advance\ydim by-.6 \Einheit}%
  \def\luu{\advance\xdim by-.5 \Einheit \advance\ydim by-.30cm}%
  \def\u{\advance\ydim by-.30cm}%
  \def\ru{\advance\xdim by.5 \Einheit \advance\ydim by-.6 \Einheit}%
  \def\rru{\advance\xdim by.25cm \advance\ydim by-.6 \Einheit}%
  \def\ruu{\advance\xdim by.5 \Einheit \advance\ydim by-.30cm}%
  #1\raise\ydim\hbox to0pt{\hskip\xdim
     \vbox to0pt{\vss\hbox to0pt{\hss$#2$\hss}\vss}\hss}%
}
\numberwithin{equation}{section}
\newtheorem{theorem}{Theorem}
\newtheorem{proposition}[theorem]{Proposition}
\newtheorem{lemma}[theorem]{Lemma}
\newtheorem{corollary}[theorem]{Corollary}
\theoremstyle{remark}
\newtheorem*{remark}{Remark}
\def\fl#1{\left\lfloor#1\right\rfloor}
\def\Hom{\operatorname{Hom}}
\def\al{\alpha}
\def\si{\sigma}
\def\rh{\rho}
\def\ga{\gamma}
\def\Ga{\Gamma}
\def\la{\lambda}
\def\De{\Delta}
\newcommand{\Z}{\mathbb{Z}}
\newcommand{\N}{\mathbb{N}}
\newcommand{\Q}{\mathbb{Q}}
\begin{document}
\title[Parity patterns associated with lifts of Hecke groups]{Parity
patterns associated with lifts of Hecke groups} 
\author[C. Krattenthaler and T.\,W. M\"uller]{Christian Krattenthaler$^\dagger$
and Thomas W. M\"uller}

\address{Fakult\"at f\"ur Mathematik, Universit\"at Wien, Nordbergstra{\ss}e
15, A-1090 Vienna, Austria}

\address{School of Mathematical Sciences, Queen Mary \& Westfield College, 
University of London, Mile End Road, London E1\,4NS, 
United Kingdom}

\thanks{$^\dagger$Research partially supported 
by the Austrian Science Foundation FWF, grant S9607-N13,
in the framework of the National Research Network
``Analytic Combinatorics and Probabilistic Number Theory"}

\subjclass [2000]{
Primary 20E06;
Secondary 05A15 05E99 16W22 20E07}
\date{}
\keywords{modular group, Hecke groups, amalgamated products, 
generalized subgroup numbers, parity patterns, involution numbers,
Fermat primes}

\begin{abstract}
Let $q$ be an odd prime, $m$ a positive integer, and let $\Ga_m(q)$ be
the group generated by two elements $x$ and $y$ subject to the
relations $x^{2m}=y^{qm}=1$ and $x^2=y^q$; that is, $\Ga_m(q)$ is the
free product of two cyclic groups of orders $2m$ respectively $qm$,
amalgamated along their subgroups of order $m$. Our main result
determines the parity behaviour of the generalized subgroup numbers of
$\Ga_m(q)$ which were defined in [T.~W.~M\"uller, {\it Adv.\ in Math.\
{\bf 153}} (2000), 118--154], and which count all the
homomorphisms of index $n$ subgroups of $\Ga_m(q)$ into a given finite
group $H$, in the case when $\gcd(m,\vert H\vert)=1$. This computation
depends upon the solution of three counting problems in the Hecke group
$\mathfrak H(q)=C_2*C_q$: (i)~determination of the parity of the subgroup
numbers of $\mathfrak H(q)$; 
(ii)~determination of the parity of the number of index $n$
subgroups of $\mathfrak H(q)$ which are isomorphic to a free product of
copies of $C_2$ and of $C_\infty$;
(iii)~determination of the parity of 
the number of index $n$
subgroups in $\mathfrak H(q)$ which are isomorphic to a free product of
copies of $C_q$. The first problem has already been
solved in [T.~W.~M\"uller, in: {\it Groups: Topological, Combinatorial and
Arithmetic Aspects}, (T.~W.~M\"uller ed.), LMS Lecture Notes Series
311, Cambridge University Press, Cambridge, 2004, pp.~327--374]. 
The bulk of our paper deals with the solution of
Problems (ii) and (iii). 
\end{abstract}
\maketitle
\section{Introduction}
\label{Sec:Intro}
\subsection{}
The study of congruences for subgroup numbers and related numerical
quantities of groups is almost as old as group theory itself. The
reader might think for instance of Frobenius' refinement in
\cite{Frob1} of one of Sylow's fundamental theorems:     

{\leftskip1cm\rightskip1cm
(I) \textit{the number $N_{p,r}(G)$ of subgroups of order $p^r$ in a
finite group $G,$ whose order is divisible by $p^r,$ satisfies
$N_{p,r}(G)\equiv 1\,\,\mathrm{mod}\,\,p$.}  \par}

Related results concern the number of solutions of equations in finite
groups. The model result here is again due to Frobenius, \cite{Frob2}: 

{\leftskip1cm\rightskip1cm
(II) \textit{the number of solutions of the equation $x^m=1$ in a
finite group $G$ is a multiple of $\gcd(m, \vert G\vert)$.} \par}

The last result was considerably extended and sharpened by P.~Hall
\cite{PHall2}, building heavily on his groundbreaking work
\cite{PHall1} concerning the structure of finite $p$-groups.  

From a somewhat more abstract point of view, Frobenius' theorem (II)
provides congruences for the number of homomorphisms of a finite
cyclic group into a finite group $G$; and it is this point of view,
which explains the connection between results of types (I) and (II).
Certain sequences $\{G_n\}_{n\geq0}$ of finite groups, like the
sequence $G_n=S_n$ of symmetric groups or, more generally, a sequence
of full monomial groups $G_n=H\wr S_n$, have the power of relating the
enumeration of finite index subgroups by index (or similar counting
functions) in an arbitrary finitely generated group $\Gamma$ to the
counting of homomorphisms $\Gamma\rightarrow G_n$. On the level of
generating functions, this relationship is of exponential type; see
Formula (\ref{Eq:MonRepsGenFunc}) for a concrete example in this direction.

To some extent, divisibility properties of subgroup numbers of a
(finitely generated) infinite group may be viewed as a kind of
analogue to these classical results for finite groups. Further
motivation comes from \textit{subgroup growth theory}, which studies
growth, asymptotics, and more delicate number-theoretic properties of
subgroup counting functions; cf.\ the monograph \cite{LubSeg} for an
overview of results up to about 2002. As far as divisibility
properties are concerned, it was a, by now classical, result of
Stothers concerning the modular group $\Gamma=\mathrm{PSL}_2(\Z)$,
which prompted the quest that led to the delicate congruence patterns
exhibited by Hecke groups and other free products. Stothers
\cite{Stothers} showed that

{\leftskip1cm\rightskip1cm
(III) \textit{the number of index $n$ subgroups in the modular group
is odd if, and only if, $n$ is of the form $2$-power minus $3$ or
$2$-power minus $6$.}  \par}

The present paper continues this line of research by investigating the
parity of (generalized) subgroup numbers of certain free products with
amalgamation, which may be viewed as lifts of an underlying Hecke
group. We now turn to a more precise description of the contents of
this paper. 

\subsection{}
For an odd prime $q$, let
\begin{equation} \label{eq:Hecke} 
\mathfrak H(q)=\big\langle x,y\,\big\vert\, x^{2}=y^q=1\big\rangle\cong C_2*C_q
\end{equation}
be the standard Hecke group attached to $q$, and for a positive
integer $m$, let 
\[
\Gamma_m = \Gamma_m(q)=
  \big\langle x,y\,\big\vert\,x^{2m} = y^{qm} = 1,\, x^2=y^q\big\rangle
\cong C_{2m} \underset{C_m}{\ast} C_{qm}
\]
be the associated sequence of lifts in the sense of
\cite[Eq.~(40)]{MuInv}. Moreover, for a finitely generated group
$\Ga$, a finite group $H$, and a positive integer $n$, set 
$$
s_{\Gamma}^H(n):= {\sum_{(\Gamma:\Delta)=n}}\vert\Hom(\Delta,H)\vert.$$
The family $\{s_\Ga^H(n)\}_{n,H}$ is the collection of {\it
generalized subgroup numbers} of $\Ga$. For $H=\{1\}$, the quantity
$s_\Ga^H(n)$ is the number of index $n$ subgroups of
$\Ga$, 
which we denote by $s_\Ga(n)$.

The present paper focusses on the parity behaviour of the
numbers $s_\Ga^H(n)$ in the case when $\Ga=\Ga_m(q)$ for $m,q$ as
above.

The systematic investigation of divisibility properties of subgroup
counting functions begins with \cite{MuParheck} 
(which had circulated in the subgroup growth community for some years), 
where the parity of
$s_q(n):=s_{\mathfrak H(q)}(n)$ and of the number of free subgroups in $\mathfrak
H(q)$ of given finite index is determined. These results have been
generalized to larger classes of groups and primes not necessarily
equal to $2$ in \cite{MuFo1,MuFo2}. For a survey of these and related
developments up to 2001, the reader may consult \cite{MuSurv}.

The present paper is a first attempt to study 
divisibility properties of (generalized) subgroup
numbers for amalgamated products. Our main result (see
Theorem~\ref{thm:main} in Section~\ref{sec:main}) 
determines the parity behaviour of the generalized subgroup numbers 
$s_{\Ga_m(q)}^H(n)$
in the case when $\gcd(m,\vert H\vert)=1$. 
As Proposition~\ref{Prop:Reduction1} in the next section 
shows, this computation can be
reduced to the solution of three counting problems in the Hecke group
$\mathfrak H(q)=C_2*C_q$: 
\begin{enumerate}
\item[(i)]determination of the parity of the subgroup
numbers $s_q(n)$ of $\mathfrak H(q)$; 
\item[(ii)]determination of the parity of the numbers $M_q(n)$,
where, by definition, $M_q(n)$ is the number of index $n$
subgroups of $\mathfrak H(q)$ isomorphic to a free product of
copies of $C_2$ and of $C_\infty$;
\item[(iii)]determination of the parity of 
the numbers $N_q(n)$, where, by definition, $N_q(n)$ 
is the number of index $n$
subgroups of $\mathfrak H(q)$ isomorphic to a free product of
copies of $C_q$.
\end{enumerate}
The first problem has already been
solved in \cite[Cor.~4]{MuParheck}. The corresponding result is
recalled here as Theorem~\ref{Thm:sqn} in the next section. 
Problem~(ii) is solved in
Theorem~\ref{Thm:NqnParity} in Section~\ref{Sec:IsoType}, which, 
perhaps somewhat surprisingly, 
shows that the numbers $M_q(n)$ are always even, and Problem~(iii) is
solved in Theorem~\ref{Thm:SolvSmod2} in Section~\ref{Sec:GenParHecke}.

The purpose of Section~\ref{Sec:CosetDiagrams} is to develop in some detail
the language of coset diagrams for the groups $\mathfrak H(q)$, in order to
prepare for the proof of Theorem~\ref{Thm:NqnParity}.
This proof needs several partial results, which are the subject of
Section~\ref{Sec:IsoType}.
The first observation is that $M_q(n)=0$ if $q$ does not divide $n$,
see \eqref{eq:qnmidn}. 
It therefore suffices to investigate the parity of the
numbers $M_q(qk)$, where $k$ is some positive integer.
The enumerative relation between the coset diagrams and subgroup
numbers is made precise in Subsection~\ref{sec:4.1}. This is done in
greater generality than actually needed in our paper, in order to
record the corresponding facts for possible use elsewhere.
Lemma~\ref{Lem:Zus} expresses the numbers of index $n$ subgroups of
$\mathfrak H(q)$ which are isomorphic to 
$C_2^{\ast\lambda} \ast C_q^{\ast\mu} \ast F_{\nu}$ for fixed
$\lambda,\mu,\nu$ in terms of the number of certain mixed graphs. The
enumeration of these graphs is subsequently reduced in
Lemma~\ref{Lem:ZusN} to graphs where the number of vertices is
divisible by $q$. Then Corollary~\ref{Cor:SubgpMu=0Enum}
summarizes specifically the implications for the subgroup numbers
$M_q(qk)$ and $s_q(n)$. The main result of Subsection~\ref{sec:4.2}
is an explicit formula for the number of (mixed) graphs that appear
in the corollary, see Proposition~\ref{Prop:NqComp}. This formula
allows us to undertake a $2$-adic study of the numbers $M_q(qk)$ for
$q\ge5$ in Subsection~\ref{Subsec:NqnParity}, 
by using known results on the $2$-adic valuation of the
number of involutions in symmetric groups. 
The corresponding result, which we prove
in Proposition~\ref{Prop:Nq2Part}, is stronger than actually needed,
as it exhibits a rapidly growing lower bound for the $2$-adic valuation 
of the numbers $M_q(qk)$.
On the other hand, a similar result cannot be expected for $q=3$.
Moreover, our explicit formula from Subsection~\ref{sec:4.2} does
not seem to be suited for the $2$-adic analysis in this case.
Instead, we use results of Stothers \cite{Stothers} to prove in
Proposition~\ref{Prop:N3Parity}, also in Subsection~\ref{Subsec:NqnParity}, 
that $M_3(3k)$ is even as well.

Section~\ref{Sec:GenParHecke} puts together the ingredients for the 
proof of Theorem~\ref{Thm:SolvSmod2}. The starting point is the
observation that the parity of $N_q(n)$ is the same as the parity
of the generalized subgroup numbers $s_q^H(n):=s_{\mathfrak
H(q)}^H(n)$ with $\vert H\vert$ even, see \eqref{eq:s=N}.
Just as subgroup numbers are connected with the enumeration of
permutation representations, by \cite[Cor.~1]{MRepres}
the generalized subgroup numbers
$s_\Gamma^H(n)$ of a finitely generated group $\Gamma$ are related to
the function $\vert\Hom(\Gamma,H\wr S_n)\vert$, counting monomial
representations of $\Gamma$, via an identity of exponential type,
which is restated here in
\eqref{Eq:MonRepsGenFunc}. This identity is exploited 
in Subsections~\ref{sec:5.1}--\ref{sec:5.3} to
compute the generalized subgroup numbers $s_q^H(n)$ via a generating
function approach, and to determine their parity
in the case when $H$ has even order in Subsection~\ref{sec:5.5}, 
thereby also determining the parity of $N_q(n)$. 
As a side result, we actually obtain a formula leading
to an integral recurrence relation for the numbers $s_q^H(n)$,
$n=0,1,\dots$, for each fixed $q$; see \eqref{Eq:PrincEq}. This aspect
is illustrated in Subsection~\ref{sec:5.4}
with the modular group $\mathfrak H(3)$; 
see Proposition~\ref{Prop:ModularHRec}.

Since the proofs of some auxiliary results in Sections~\ref{Sec:IsoType}
and \ref{Sec:GenParHecke} are tedious, involving however relatively
straightforward calculations, we have put these proofs in an appendix,
so that they do not distract from or obscure the main line of arguments.

The final section, Section~\ref{sec:main}, contains
our main result, Theorem~\ref{thm:main}, determining the parity 
behaviour of the generalized subgroup numbers 
$s_{\Ga_m(q)}^H(n)$ in the case when\break 
$\gcd(m,\vert H\vert)=1$,
as well as its specialization to the case when $q$ is a Fermat prime,
in which case one can be much more precise (see
Corollary~\ref{cor:main}), together with further simplifications 
when $m$ and/or $n$ satisfy some more specific properties.

\section{A reduction result}
\label{Sec:Reduction}

For an odd prime $q$, let $\Gamma_m = \Gamma_m(q)$ 
be as in the introduction. We note that \linebreak $\Ga_m(q)\not\cong
\Ga_{m'}(q')$ for $(m,q)\ne(m',q')$. 
In this section we will show that the parity behaviour of the
generalized subgroup numbers of these groups can be computed in terms
of certain subgroup numbers pertaining to the base group $\mathfrak
H(q)$; see Proposition~\ref{Prop:Reduction1}.

Set
\[
\zeta:=x^2=y^q,
\] 
so that $\zeta$ is a primitive element for the centre of
$\Gamma_m$. For a positive integer $n$, a divisor $d$ of $m$, and a
finite group $H$, define 
\[
s_{\Gamma_m}^H(d,n):=\underset{\Delta\cap\langle\zeta\rangle =
\langle\zeta^{m/d}\rangle}{ \underset{(\Gamma_m:\Delta)=n}
{\sum_{\Delta}}}\vert\Hom(\Delta,H)\vert,
\] 
so that
\begin{equation}
\label{Eq:GSNDecomp}
s_{\Gamma_m}^H(n) = \sum_{d\mid m}s_{\Gamma_m}^H(d,n).
\end{equation}
In what follows, we shall find it necessary to suppose that $\gcd(m,\vert
H\vert)=1$. Under the natural projection 
\[
\pi_d:
\Gamma_m\longrightarrow\Gamma_m\big/\big\langle\zeta^{m/d}\big\rangle
\cong \Gamma_{m/d} 
\]
the subgroups $\Delta$ of index $n$ in $\Gamma_m$ having the property
that $\Delta\supseteq\big\langle\zeta^{m/d}\big\rangle$ correspond
bijectively to the totality of index $n$ subgroups in
$\Gamma_{m/d}$. Moreover, since $\gcd(m,\vert H\vert)=1$, every
homomorphism $\varphi: \Delta\rightarrow H$ factors through the
canonical projection $\pi_d^\Delta: \Delta
\rightarrow\Delta/\langle\zeta^{m/d}\rangle$. Hence, for $d\mid m$, 
\begin{align*}
s_{\Gamma_{m/d}}^H(n) &=
\underset{(\Gamma_{m/d}:\bar{\Delta})=n} {\sum_{\bar{\Delta}}}
\vert\Hom(\bar{\Delta},H)\vert \\[2mm] 
&= \underset{\Delta\supseteq\langle\zeta^{m/d}\rangle}
{\underset {(\Gamma_m:\Delta)=n}
{\sum_{\Delta}}}\vert\Hom(\Delta,H)\vert \\[2mm]
&= \underset{d\mid d'\mid
m}{\sum_{d'}}\,\,\underset{\Delta\cap\langle\zeta\rangle=
\langle\zeta^{m/d'}\rangle}{\underset {(\Gamma_m:\Delta)=n}
{\sum_{\Delta}}}
\vert\Hom(\Delta,H)\vert, 
\end{align*}
and thus
\begin{equation}
\label{Eq:Moebius1}
s_{\Gamma_{m/d}}^H(n) = \underset{d\mid d'\mid m}{\sum_{d'}}
s_{\Gamma_m}^H(d',n),\quad d\mid m. 
\end{equation}
Fixing $q,m,n,$ and $H$, and setting
\[
g(x):= \left.\begin{cases}
           s_{\Gamma_{m/x}}^H(n),&x\mid m\\[2mm]
           0,&\mbox{otherwise}
           \end{cases}\right\}\quad(x\in\N),
\]
\[
f(x):= \left.\begin{cases}
           s_{\Gamma_m}^H(x,n),& x\mid m\\[2mm]
           0,&\mbox{otherwise}
           \end{cases}\right\}\quad(x\in\N),
\]
and letting $P$ be the poset with ground-set $\N$ and partial order
defined by $x\le y$ if, and only if, $y\mid x$,
Equation (\ref{Eq:Moebius1}) translates into
\[
g(x) = \sum_{y\leq x} f(y),\quad x\in P,
\]
and, by definition of the function $f$, we have $f(x)=0$ unless $x\geq
m$. By M\"obius inversion (cf.\ \cite[Propositions~2--3]{Rota}), 
we find that 
\begin{equation*}
s_{\Gamma_m}^H(d,n) = \underset{d\mid d'\mid
m}{\sum_{d'}}\mu_P(d,d')\,s_{\Gamma_{m/d'}}^H(n),\quad d\mid m, 
\end{equation*}
where $\mu_P(x,y)$ is the M\"obius function in the poset $P$.
It is well-known that $\mu_P(x,y)=\mu(x/y)$ for $x\le y$ (that is,
$y\mid x$), where $\mu(\cdot)$ is the classical M\"obius function of
number theory. Hence, we have
\begin{equation}
\label{Eq:Moebius2}
s_{\Gamma_m}^H(d,n) = \underset{d\mid d'\mid
m}{\sum_{d'}}\mu(d'/d)\,s_{\Gamma_{m/d'}}^H(n),\quad d\mid m.
\end{equation}
On the other hand, if we set $d=1$ in \eqref{Eq:Moebius2} and 
apply M\"obius
inversion in the other direction, we obtain
\begin{equation}
\label{Eq:Reduction1}
s_{\Gamma_m}^H(n) = \sum_{d\mid m}\,s_{\Gamma_d}^H(1,n),\quad\gcd(m,\vert
H\vert)=1. 
\end{equation}

Our next task is to compute $s_{\Gamma_m}^H(1,n)$ modulo $2$. If there
exists a subgroup $\Delta\leq \Gamma_m$ with $(\Gamma_m:\Delta)=n$ and
$\Delta\cap\langle\zeta\rangle = 1$, then 
\[
\widetilde{\Delta}:= \Delta\cdot\langle\zeta\rangle =
\Delta\times\langle\zeta\rangle; 
\]
hence, $m$ must divide $n$. Thus,
\begin{equation}
\label{Eq:ZeroCrit}
s_{\Gamma_m}^H(1,n)=0,\quad m\nmid n.
\end{equation}
Suppose now that $m\mid n$. Given a subgroup $\Delta\leq\Gamma_m$ with
$(\Gamma_m:\Delta)=n$ and $\Delta\cap\langle\zeta\rangle = 1$,
consider the diagram 
\begin{equation}
\label{Eq:LiftDiagram}
\begin{CD}
\Delta @>{m}>> \widetilde{\Delta} = \Delta\cdot\langle\zeta\rangle
@>{n/m}>> \Gamma_m\\ 
@V{\pi_m}VV  @V{\pi_m}VV    @VV{\pi_m}V\\
\bar{\Delta} @>{\text{id}}>> \bar{\Delta} @>>{n/m}>
\hbox to 8pt{$\bar{\Gamma}\cong\Gamma_1=\mathfrak H(q)$\hss}
\end{CD}
\end{equation}
(a number next to an arrow indicates the index of the corresponding
embedding). Reading Diagram (\ref{Eq:LiftDiagram}) from bottom to top,
we can describe $\Delta$ as a complement to $\langle\zeta\rangle$ in
the lift $\widetilde{\Delta}=\pi_m^{-1}(\bar{\Delta})$ of a subgroup
$\bar{\Delta}$ in $\bar{\Gamma}\cong\mathfrak{H}(q)$ with
$(\bar{\Gamma}:\bar{\Delta})=n/m$. Hence, 
\begin{equation}
\label{Eq:Comp1}
s_{\Gamma_m}^H(1,n) =
\underset{(\bar{\Gamma}:\bar{\Delta})=n/m}
{\sum_{\bar{\Delta}}}\,\vert\Hom(\bar{\Delta},H)\vert\cdot
\vert\mathfrak{C}(\pi_m^{-1}(\bar{\Delta});\langle\zeta\rangle)\vert, 
\quad m\mid n,
\end{equation}
where
$\mathfrak{C}(\pi_m^{-1}(\bar{\Delta});\langle\zeta\rangle)$ is the
(possibly empty) set of complements of $\langle\zeta\rangle$ in
$\pi_m^{-1}(\bar{\Delta})$.

Given $\bar{\Delta}\leq\bar{\Gamma}$ of index $n/m$, when does
$\langle\zeta\rangle$ split in $\pi_m^{-1}(\bar{\Delta})$? To answer
this question, we make use of the long exact (Mayer--Vietoris)
sequence 
\begin{multline}
\label{Eq:MVSequence}
\cdots\rightarrow H^k(G;A)\underset{(res,res)}{\longrightarrow}
H^k(G_1;A)\oplus H^k(G_2;A)\\
\underset{(res,-res)}{\longrightarrow}
H^k(S;A)\underset{\delta}{\longrightarrow}
H^{k+1}(G;A)\rightarrow\cdots 
\end{multline} 
associated with an amalgamated product $G=G_1 \underset{S}{\ast} G_2$
and a left $RG$-module $A$; cf. \cite[Theorem~2.10]{Bieri} or
\cite[Theorem~2]{Chiswell}. By the Kurosh subgroup theorem,
$\bar{\Delta}$ is of the form 
\[
\bar{\Delta} \cong C_2^{\ast\lambda(\bar{\Delta})} \ast
C_q^{\ast\mu(\bar{\Delta})} \ast F_{\nu(\bar{\Delta})} 
\]
with non-negative integers
$\lambda(\bar{\Delta}),\mu(\bar{\Delta}),\nu(\bar{\Delta})$ (these
cardinal numbers are seen to be finite through a comparison of Euler
characteristics; cf. Equation~(\ref{Eq:Type/IndexRel}) below). 
Applying (\ref{Eq:MVSequence}) together with the fact that every
extension by a free group splits, we see that 
\begin{equation}
\label{Eq:H2Rel1}
H^2(\bar{\Delta};C_m) \cong \lambda(\bar{\Delta}) H^2(C_2;C_m) \oplus
\mu(\bar{\Delta}) H^2(C_q;C_m). 
\end{equation}
We now distinguish four cases.

(i) \textit{$2\nmid m$ and $q\mid m$.} Suppose that, for some subgroup
$\bar{\Delta}\leq\bar{\Gamma}$ of index $(\bar{\Gamma}:\bar{\Delta}) =
n/m$, $\langle\zeta\rangle$ splits in $\pi_m^{-1}(\bar{\Delta})$, and
that $\mu(\bar{\Delta})>0$. Then $C_m\times C_q$, an Abelian group of
rank $2$, would embed into $\Gamma_m$, which is impossible. Hence, if
$\langle\zeta\rangle$ splits in $\pi_m^{-1}(\bar{\Delta})$, then
$\mu(\bar{\Delta})=0$. Conversely, if $\mu(\bar{\Delta})=0$, then
every extension of $C_m$ by $\bar{\Delta}$ splits, since
$H^2(C_2;C_m)=0$ by the Schur--Zassenhaus theorem plus the fact that
$2\nmid m$; cf. for instance \cite[Chapter~3]{Wehr}. To summarize, we
have shown that 
\begin{equation}
\label{Eq:Splitting1}
\mbox{\textit{if $2\nmid m$ and $q\mid m,$ then $\langle\zeta\rangle$
splits in $\pi_m^{-1}(\bar{\Delta})$ if, and only if,
$\mu(\bar{\Delta})=0$.}} 
\end{equation}
(ii) \textit{$2\nmid m$ and $q\nmid m$.} In this case, again by the
Schur--Zassenhaus theorem and the isomorphism (\ref{Eq:H2Rel1}), we
find that $H^2(\bar{\Delta};C_m)=0$. Hence, 
\begin{multline}
\label{Eq:Splitting2}
\mbox{\textit{if $2\nmid m$ and $q\nmid m,$ then the lift
$\pi_m^{-1}(\bar{\Delta})$}}\\
\mbox{\textit {of every subgroup $\bar{\Delta}\leq_{n/m}
\bar{\Gamma}$ splits the centre $\langle\zeta\rangle$.}} 
\end{multline}
(iii) \textit{$2\mid m$ and $q\nmid m$.} Arguing as in (i), we find that
\begin{equation}
\label{Eq:Splitting3}
\mbox{\textit{if $2\mid m$ and $q\nmid m,$ then $\langle\zeta\rangle$
splits in $\pi_m^{-1}(\bar{\Delta})$ if, and only if,
$\lambda(\bar{\Delta})=0$.}} 
\end{equation}
(iv) $2q\mid m$. In this case, we find by similar arguments that
\begin{equation}
\label{Eq:Splitting4}
\mbox{\textit{if $2q\mid m,$ then $\langle\zeta\rangle$ splits in
$\pi_m^{-1}(\bar{\Delta})$ if, and only if, $\lambda(\bar{\Delta}) =
\mu(\bar{\Delta})=0$.}} 
\end{equation}
Given the information in (\ref{Eq:Splitting1})--(\ref{Eq:Splitting4}),
we can now evaluate the quantities $s_{\Gamma_m}^H(1,n)$ modulo $2$ in
terms of data associated with the base group $\bar{\Gamma}$
alone. Denote by $M_q(n)$ the number of index $n$ subgroups
$\bar{\Delta}$ in $\bar{\Gamma}=\mathfrak{H}(q)$ with the property that 
$\mu(\bar{\Delta})=0$, by 
$N_q(n)$ the number of subgroups of index $n$ in
$\bar{\Gamma}$ which are isomorphic to a free power of $C_q$, 
and recall that $s_q(n)$ denotes the number of index $n$ subgroups 
in $\mathfrak{H}(q)$. 
\begin{proposition}
\label{Prop:Reduction1}
For $m,n\geq1,$ an odd prime $q,$ and a finite group $H,$ we have 
\begin{equation*}
s_{\Gamma_m(q)}^H(1,n) \equiv \left\{\begin{matrix}
                             N_q(n/m);\hfill& 2\mid m\mid n     \mbox{ and
}q\nmid m \hfill\\[2mm]
                             s_q(n/m);\hfill& 2\nmid m, q\nmid m, m\mid
n,\mbox{ and }2\nmid\vert H\vert\hfill\\[2mm]
                             N_q(n/m);\hfill& 2\nmid m, q\nmid m, m\mid
n,\text{ and } 2\mid\vert H\vert\hfill\\[2mm]
                             M_q(n/m);\hfill& q\mid m\mid n, 2\nmid m,\mbox{
and }2\nmid\vert H\vert\hfill\\[2mm]
                             0;\hfill&\mbox{otherwise}\hfill
                             \end{matrix}\right\}\,\,\,\bmod{2}.
\end{equation*}
\end{proposition}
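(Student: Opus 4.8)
The plan is to combine the counting identity \eqref{Eq:Comp1} with a parity analysis of its two constituent factors. Since \eqref{Eq:ZeroCrit} already gives $s_{\Gamma_m}^H(1,n)=0$ whenever $m\nmid n$, I may assume $m\mid n$ and work with the sum in \eqref{Eq:Comp1} over the index-$(n/m)$ subgroups $\bar\Delta\leq\bar\Gamma\cong\mathfrak H(q)$. Writing $\bar\Delta\cong C_2^{\ast\lambda}\ast C_q^{\ast\mu}\ast F_\nu$ according to the Kurosh decomposition, the entire problem reduces to determining, modulo $2$, the two factors $\vert\Hom(\bar\Delta,H)\vert$ and $\vert\mathfrak C(\pi_m^{-1}(\bar\Delta);\langle\zeta\rangle)\vert$ as functions of $(\lambda,\mu,\nu)$, and then, in each of the four arithmetic regimes \eqref{Eq:Splitting1}--\eqref{Eq:Splitting4} for $m$, reading off which isomorphism types contribute an odd summand.

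The first key step is the complement count. As $\langle\zeta\rangle\cong C_m$ is central in $\pi_m^{-1}(\bar\Delta)$, whenever $\langle\zeta\rangle$ splits the complements are exactly the group-theoretic sections of $\pi_m^{-1}(\bar\Delta)\to\bar\Delta$; two sections differ by a map $\bar\Delta\to C_m$ which, $C_m$ being central, must be a homomorphism, and distinct sections have distinct images. Hence, when a complement exists, $\vert\mathfrak C(\pi_m^{-1}(\bar\Delta);\langle\zeta\rangle)\vert=\vert\Hom(\bar\Delta,C_m)\vert$, which by the universal property of free products factorises as $\gcd(2,m)^\lambda\gcd(q,m)^\mu\,m^\nu$. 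The same factorisation gives $\vert\Hom(\bar\Delta,H)\vert=\vert\Hom(C_2,H)\vert^\lambda\vert\Hom(C_q,H)\vert^\mu\vert H\vert^\nu$.

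The second step records the relevant parities. The involution $h\mapsto h^{-1}$ acts on the solution set of $h^q=1$ in $H$ with unique fixed point $h=1$ (here $q$ is odd), so $\vert\Hom(C_q,H)\vert$ is always odd; likewise $\gcd(q,m)$ is odd. By contrast $\vert\Hom(C_2,H)\vert$, the number of solutions of $h^2=1$, is even exactly when $2\mid\vert H\vert$ (Frobenius' theorem (II) for the even case, Cauchy for the odd), and $\gcd(2,m)^\lambda m^\nu$ is even exactly when $2\mid m$ and $(\lambda,\nu)\neq(0,0)$. Assembling the factorisations, $\vert\Hom(\bar\Delta,H)\vert$ is odd if and only if $2\nmid\vert H\vert$ or $\lambda=\nu=0$, and $\vert\mathfrak C\vert$ is odd if and only if $\langle\zeta\rangle$ splits and, in addition, $2\nmid m$ or $\lambda=\nu=0$.

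It then remains to intersect these two oddness criteria with the splitting rule in each regime and to match the outcome against the definitions of $s_q$, $M_q$, $N_q$. For $2\nmid m,\ q\nmid m$ the lift always splits \eqref{Eq:Splitting2}, so a summand is odd for every $\bar\Delta$ when $2\nmid\vert H\vert$, giving $s_q(n/m)$, and for exactly the types $C_q^{\ast\mu}$ when $2\mid\vert H\vert$, giving $N_q(n/m)$. For $2\mid m,\ q\nmid m$, splitting forces $\lambda=0$ \eqref{Eq:Splitting3} while oddness of $\vert\mathfrak C\vert$ forces $\nu=0$ as well, so only the types $C_q^{\ast\mu}$ survive (and then $\vert\Hom(\bar\Delta,H)\vert=\vert\Hom(C_q,H)\vert^\mu$ is automatically odd), giving $N_q(n/m)$ irrespective of $\vert H\vert$. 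For $2\nmid m,\ q\mid m$, \eqref{Eq:Splitting1} forces $\mu=0$: with $2\nmid\vert H\vert$ every surviving term is odd and one obtains $M_q(n/m)$, while with $2\mid\vert H\vert$ the extra demand $\lambda=\nu=0$ would leave only the trivial subgroup, which has infinite index, so the sum is $0$. Finally, when $2q\mid m$, \eqref{Eq:Splitting4} forces $\lambda=\mu=0$, whence $\bar\Delta\cong F_\nu$ with $\nu\geq1$ at finite index and $\vert\mathfrak C\vert=m^\nu$ is even, killing every term. I expect the genuine obstacle to lie not in any single estimate but in this last bookkeeping: one must repeatedly invoke that a finite-index subgroup of $\mathfrak H(q)$ is never trivial, so that the degenerate type $\lambda=\mu=\nu=0$ never occurs, in order to see that the ``otherwise'' rows vanish and that the surviving families are counted precisely by $M_q$ and $N_q$; securing the clean identity $\vert\mathfrak C\vert=\vert\Hom(\bar\Delta,C_m)\vert$, which turns the cohomological splitting data into an honest enumeration, is the other point requiring care.
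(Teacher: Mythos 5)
Your proof is correct and takes essentially the same route as the paper's: starting from \eqref{Eq:ZeroCrit} and \eqref{Eq:Comp1}, identifying the complement count with $\vert\Hom(\bar{\Delta},C_m)\vert$ in the split case (a fact the paper uses implicitly in its displayed computation), and then running the parity analysis of the factorised Hom-counts against the splitting criteria \eqref{Eq:Splitting1}--\eqref{Eq:Splitting4}. The paper only writes out the first case and leaves the rest to the reader, so your case-by-case bookkeeping (including the observation that the type $\lambda=\mu=\nu=0$ cannot occur at finite index, which settles the ``otherwise'' rows) is exactly the intended completion.
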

\begin{proof}
This is straightforward. For instance, in the first case ($2\mid m\mid
n\mbox{ and } q\nmid m$), the computation runs as follows: 
\begin{align*}
s_{\Gamma_m}^H(1,n) &=
\underset{(\bar{\Gamma}:\bar{\Delta})=n/m}{\sum_{\bar{\Delta}}}\,
\vert\Hom(\bar{\Delta},H)\vert\cdot
\vert\mathfrak{C}(\pi_m^{-1}(\bar{\Delta});
\langle\zeta\rangle)\vert\\[2mm] 
&=
\underset{\underset{\lambda(\bar{\Delta})=0}
{(\bar{\Gamma}:\bar{\Delta})=n/m}}{\sum_{\bar{\Delta}}}\,
\vert\Hom(\bar{\Delta},H)\vert\cdot\vert\Hom(\bar{\Delta},C_m)\vert\\[2mm]
&=
\underset{\underset{\lambda(\bar{\Delta})=0}
{(\bar{\Gamma}:\bar{\Delta})=n/m}}{\sum_{\bar{\Delta}}}\,
m^{\nu(\bar{\Delta})}\,\vert\Hom(\bar{\Delta},H)\vert\\[2mm]
&\equiv \underset{\underset{\lambda(\bar{\Delta})=\nu(\bar{\Delta})=0}
{(\bar{\Gamma}:\bar{\Delta})=n/m}}{\sum_{\bar{\Delta}}}\,\vert\Hom(\bar{\Delta},H)\vert\\[2mm]
&\equiv \underset{\underset{\lambda(\bar{\Delta})=\nu(\bar{\Delta})=0}
{(\bar{\Gamma}:\bar{\Delta})=n/m}}{\sum_{\bar{\Delta}}}\,1\\[2mm]
&= N_q(n/m),
\end{align*}
where $\bar{\Gamma}=\mathfrak{H}(q)$, and where we have made use of
(\ref{Eq:Comp1}), (\ref{Eq:Splitting3}), and the fact that 
\[
\vert\Hom(C_q,H)\vert = 1 \,+\, (q-1)\cdot\big\vert\big\{U\leq H:\,
U\cong C_q\big\}\big\vert 
\]
is always odd. Computations for the other cases are similar, and are
left to the reader. 
\end{proof}
Combining Equation (\ref{Eq:Reduction1}) with
Proposition~\ref{Prop:Reduction1}, the calculation of the generalized
subgroup numbers $s_{\Gamma_m(q)}^H(n)$ modulo $2$ has been reduced to
the mod $2$ calculation of the numbers $s_q(n)$, $M_q(n)$, and
$N_q(n)$. The first problem has already been solved in
\cite[Cor.~4]{MuParheck}. The result reads as follows.

\begin{theorem} \label{Thm:sqn}
Let $q$ be an odd prime. Then
\begin{gather} \label{eq:sqn}
s_q(n)\equiv1\ (\text{\em mod }2)\quad \text{if, and only if,}\quad
n=1+2(q-1)\eta\text{ or }n=2+4(q-1)\eta,
\text{ where }
\notag
\\
  \mathfrak{s}_2((q-1)\eta+1) = \mathfrak{s}_2(\eta)
+ \mathfrak{s}_2((q-2)\eta+1).
\end{gather}
Here, $\mathfrak{s}_2(x)$ denotes the sum
of digits in the binary expansion of the positive integer $x$.
In particular, if $q$ is a Fermat prime, then 
\begin{multline} \label{eq:sqnFermat}
s_q(n)\equiv1\ (\text{\em mod }2)\quad \text{if, and only if,}\\
n=\frac {2(q-1)^\si-q} {q-2}\text{ or }
n=\frac {4(q-1)^\si-2q} {q-2} 
\text{ for some $\si\ge1$.}
\end{multline}

\end{theorem}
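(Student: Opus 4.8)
The plan is to pass from subgroups to transitive permutation representations and then to reduce the parity question to a $2$-adic valuation computation. Writing $h_q(n):=\vert\Hom(\mathfrak H(q),S_n)\vert$ and $t_q(n)$ for the number of those homomorphisms $\mathfrak H(q)\to S_n$ whose image is transitive, the standard correspondence between index $n$ subgroups and transitive actions with a marked point gives $s_q(n)=t_q(n)/(n-1)!$, while decomposing an arbitrary action into the orbit of the marked point and its complement yields the integer recurrence
\begin{equation*}
t_q(n)=h_q(n)-\sum_{k=1}^{n-1}\binom{n-1}{k-1}t_q(k)\,h_q(n-k).
\end{equation*}
Since $\mathfrak H(q)=C_2\ast C_q$ is a free product, $h_q(n)=I_2(n)\,I_q(n)$, where $I_d(n):=\vert\{\pi\in S_n:\pi^d=1\}\vert$. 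Thus $s_q(n)$ is odd if, and only if, $v_2\big(t_q(n)\big)=v_2\big((n-1)!\big)=(n-1)-\mathfrak s_2(n-1)$, where $v_2(\cdot)$ denotes the $2$-adic valuation, and the task becomes the precise determination of $v_2(t_q(n))$.

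First I would simplify the input data $h_q(n)$ modulo powers of $2$. The recurrence $I_q(n)=I_q(n-1)+(q-1)!\binom{n-1}{q-1}I_q(n-q)$ shows, because $(q-1)!$ is even for $q\ge3$, that $I_q(n)$ is \emph{odd} for every $n$; hence $I_q(n)$ is a $2$-adic unit and $v_2(h_q(n))=v_2(I_2(n))$. Similarly $I_2(n)=I_2(n-1)+(n-1)I_2(n-2)$ gives $I_2(n)\equiv0\pmod2$ for $n\ge2$, so that $h_q(n)$ is odd exactly for $n\in\{0,1\}$. In this way the whole problem is anchored to the (classical) $2$-adic valuation of the involution numbers $I_2(n)$, for which sharp information is available.

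The heart of the argument is then the $2$-adic analysis of the convolution recurrence for $t_q(n)$. Tracking valuations alone will not suffice: terms of equal valuation occur in abundance and may cancel, so one must retain residues modulo higher powers of $2$, the odd units $I_q(n)$ now playing a genuine role. Carrying this through should show that $v_2(t_q(n))$ can meet the threshold $(n-1)-\mathfrak s_2(n-1)$ only when $n$ lies in the two arithmetic families $n=1+2(q-1)\eta$ and $n=2+4(q-1)\eta$, and that within these families equality holds exactly when the surviving contribution is a unit, a condition that by Kummer's theorem amounts to the vanishing of all carries in the binary addition $\eta+\big((q-2)\eta+1\big)=(q-1)\eta+1$, i.e.\ to $\mathfrak s_2((q-1)\eta+1)=\mathfrak s_2(\eta)+\mathfrak s_2((q-2)\eta+1)$. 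This cancellation-control step is where I expect the main difficulty to lie, and where the binary-digit condition of \eqref{eq:sqn} is actually produced; I would organise it around a functional equation for the generating series reduced modulo $2$, exploiting the Frobenius identity $F(z)^2\equiv F(z^2)$ to linearise the recurrence.

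Finally, for a Fermat prime $q$ one has $q-1=2^a$ with $a=2^k$, so the no-carry condition becomes purely dyadic: adding $\eta$ and $(2^a-1)\eta+1$ without carries forces $\eta$ to be the base-$2^a$ repunit $\eta=\frac{(q-1)^{\si-1}-1}{q-2}$ for some $\si\ge1$. Substituting into the two families collapses them to the closed forms $n=\frac{2(q-1)^\si-q}{q-2}$ and $n=\frac{4(q-1)^\si-2q}{q-2}$ of \eqref{eq:sqnFermat}; as a consistency check, $q=3$ returns Stothers' description of $n$ as a $2$-power minus $3$ or a $2$-power minus $6$.
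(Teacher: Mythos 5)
First, a point of reference: the paper does not prove Theorem~\ref{Thm:sqn} at all --- it is quoted from \cite[Cor.~4]{MuParheck}, and the paper only remarks (after Theorem~\ref{Thm:SolvSmod2}) that a detailed analysis of \eqref{Eq:PrincEq} with $\vert H\vert$ odd, via Lemma~\ref{Lem:DetMod2hunger}, would yield a \emph{new} proof, ``the case $q\equiv3\ (\mathrm{mod}\ 4)$ being particularly involved.'' So your proposal must be judged on its own. Your preparatory steps are all correct: the identity $s_q(n)=t_q(n)/(n-1)!$, the orbit-decomposition recurrence for $t_q(n)$, the factorization $h_q(n)=I_2(n)I_q(n)$, the oddness of $I_q(n)$ for all $n$ (and evenness of $I_2(n)$ for $n\ge2$), and the criterion $s_q(n)$ odd $\Leftrightarrow v_2(t_q(n))=(n-1)-\mathfrak{s}_2(n-1)$. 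Likewise your deduction of \eqref{eq:sqnFermat} from \eqref{eq:sqn} for Fermat primes is sound and is essentially the paper's own argument for Corollary~\ref{Cor:GenParHecke}: with $q-1=2^\lambda$ the digit-sum condition collapses to $\mathfrak{s}_2((q-2)\eta+1)=1$, forcing $\eta$ to be a base-$2^\lambda$ repunit.

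The genuine gap is that the entire content of the theorem --- the two progressions $n=1+2(q-1)\eta$, $n=2+4(q-1)\eta$ and the no-carry condition --- is never derived. Everything from ``Carrying this through should show that $v_2(t_q(n))$ can meet the threshold\dots'' onward is an announcement of an analysis you have not performed, and you say yourself that this is where the difficulty lies. Worse, the plan as formulated is unlikely to be workable: to decide whether $v_2(t_q(n))$ equals $v_2((n-1)!)$ you need $t_q(n)$ modulo $2^{(n-1)-\mathfrak{s}_2(n-1)+1}$, a precision growing linearly in $n$, while the convolution recurrence mixes terms of wildly different valuations; and reducing the series $\sum_n t_q(n)z^n$ modulo $2$ is useless, since almost all $t_q(n)$ are even, so that series carries essentially no information. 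The route that actually works --- the one in \cite{MuParheck}, and the one this paper executes for the companion result Theorem~\ref{Thm:SolvSmod2} --- is to work with the subgroup-counting series itself modulo $2$: use the exponential identity \eqref{Eq:MonRepsGenFunc} (with $H$ trivial this is $\sum_n h_q(n)z^n/n!=\exp\bigl(\sum_n s_q(n)z^n/n\bigr)$) to convert the homomorphism recurrences into a differential/functional equation for $\mathcal{S}(z)=\sum_n s_q(n)z^{n-1}$, reduce it modulo $2$ to a \emph{polynomial} equation (the analogue, for $\vert H\vert$ odd, of \eqref{Eq:Smod2}), solve by Lagrange inversion to obtain an explicit binomial-coefficient formula, and only then invoke Kummer's theorem to produce the digit-sum condition. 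Your sketch gestures at this (``functional equation \dots reduced modulo $2$''), but no such equation is written down, let alone solved; until that is done, the proposal proves nothing beyond its (correct) reformulation of the problem.
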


\section{Coset diagrams associated with Hecke groups}
\label{Sec:CosetDiagrams}

There is a well-known connection between the subgroups of index $n$ in
a group $\Gamma$ and the transitive permutation representations of 
$\Gamma$ of degree $n$, which is sometimes useful to obtain
information concerning various subgroup numbers of $\Gamma$. As the
details of this relationship are somewhat hard to find in the
literature, we provide a brief discussion of the general facts 
in Subsection~\ref{sec:3.1} for the 
convenience of the reader, after which we focus on the special case of 
Hecke groups and their diagrams 
in Subsection~\ref{sec:3.2}. 
This discussion will provide the
basis for the mod~$2$ determination of the numbers $M_q(n)$ in 
Section~\ref{Sec:IsoType}.

\subsection{Transitive permutation representations versus
finite-index subgroups} \label{sec:3.1}
Here, and in the rest of the paper,
for a non-negative integer $n$, the symbol
$[n]$ will always denote the standard set $\{1,2,\ldots,n\}$ of size $n$.
Let $S_n=\mathrm{Sym}([n])$ be the symmetric group of degree $n$ acting 
on the standard set $[n]$,
endowed with geometric multiplication, let
$U_n=\mathrm{stab}_{S_n}(1)$ be the stabilizer of the letter $1$ in
$S_n$, and, for a group $\Gamma$,  denote by $T_n(\Gamma)$ the set of
all transitive permutation representations $\varphi: \Gamma\rightarrow
S_n$. For a permutation $\sigma\in S_n$, let  $\iota_\sigma$ be the
inner automorphism of $S_n$ induced by $\sigma$, 
\[
\iota_\sigma (\pi) = \sigma\circ \pi\circ \sigma^{-1},\quad \pi\in S_n, 
\]
and define a left action of $U_n$ on $T_n(\Gamma)$ via the commutative 
diagram
\[
\begin{CD}
\Gamma @>\varphi>> S_n\\
@V{\mathrm{id}_\Gamma}VV  @VV{\iota_\sigma}V\\
\Gamma @>>{\sigma\cdot\varphi}> S_n
\end{CD}\qquad(\sigma\in U_n,\, \varphi\in T_n(\Gamma)),
\]
where $\mathrm{id}_\Gamma$ is the identity map on $\Gamma$. The basic
observation is now the following. 
\begin{proposition}
\label{Prop:TransReps}
{\em(i)} The action of $U_n$ on $T_n(\Gamma)$ is free; in particular,
each equivalence class of $T_n(\Gamma)$ modulo $U_n$ contains
precisely $(n-1)!$ elements.

{\em(ii)} There is a one-to-one correspondence between the subgroups
of index $n$ in $\Gamma$ and the equivalence classes of $T_n(\Gamma)$
under the action of $U_n$. 
\end{proposition}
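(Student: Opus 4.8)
The plan is to handle the two parts separately, both resting on elementary facts about transitive actions. Throughout I read $\varphi$ as giving a left action of $\Gamma$ on $[n]$ via $g\cdot i=\varphi(g)(i)$, so that $\varphi(gh)(i)=\varphi(g)\big(\varphi(h)(i)\big)$.

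For part~(i), I would first translate the stabilizer condition into a centralizer statement. A permutation $\sigma\in U_n$ fixes $\varphi\in T_n(\Gamma)$ precisely when $\sigma\cdot\varphi=\varphi$, i.e. $\iota_\sigma\circ\varphi=\varphi$, which says $\sigma\,\varphi(g)\,\sigma^{-1}=\varphi(g)$ for every $g\in\Gamma$; equivalently, $\sigma$ centralizes the image $\varphi(\Gamma)$. Since $\varphi$ is a \emph{transitive} representation, $\varphi(\Gamma)$ is a transitive subgroup of $S_n$, and the key fact is that the centralizer in $S_n$ of a transitive subgroup acts semiregularly on $[n]$. Concretely, if $\sigma$ commutes with all of $\varphi(\Gamma)$ and fixes $1$, then for any $j\in[n]$ I choose $g$ with $\varphi(g)(1)=j$ (transitivity) and compute $\sigma(j)=\sigma\big(\varphi(g)(1)\big)=\varphi(g)\big(\sigma(1)\big)=\varphi(g)(1)=j$, so $\sigma$ is the identity. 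This shows the action is free, and since $\vert U_n\vert=(n-1)!$, each orbit has exactly $(n-1)!$ elements.

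For part~(ii), I would attach to each $\varphi\in T_n(\Gamma)$ the point-stabilizer $\Delta_\varphi:=\varphi^{-1}(U_n)=\{g\in\Gamma:\varphi(g)(1)=1\}$; by the orbit--stabilizer theorem for the transitive action on $[n]$, this has index $n$ in $\Gamma$. The assignment is constant on $U_n$-orbits: if $\varphi'=\sigma\cdot\varphi$ with $\sigma\in U_n$, then $\varphi'(g)(1)=\sigma\varphi(g)\sigma^{-1}(1)=\sigma\varphi(g)(1)$, which equals $1$ if and only if $\varphi(g)(1)=1$, so $\Delta_{\varphi'}=\Delta_\varphi$. Surjectivity onto index $n$ subgroups is the coset construction: given $\Delta$ of index $n$, let $\Gamma$ act on the left cosets $\Gamma/\Delta$ and identify this $n$-element set with $[n]$ so that the coset $\Delta$ becomes the letter $1$; this yields a transitive $\varphi$ with $\Delta_\varphi=\Delta$.

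The only genuine obstacle, and the step I would spend most care on, is injectivity: if $\Delta_\varphi=\Delta_{\varphi'}=:\Delta$, I must produce $\sigma\in U_n$ with $\varphi'=\sigma\cdot\varphi$. I would define $\sigma$ directly by choosing, for each $i\in[n]$, an element $g_i\in\Gamma$ with $\varphi(g_i)(1)=i$ (possible by transitivity) and setting $\sigma(i):=\varphi'(g_i)(1)$. Well-definedness is the crux: if $\varphi(g)(1)=\varphi(h)(1)$ then $h^{-1}g\in\Delta_\varphi=\Delta_{\varphi'}$, whence $\varphi'(g)(1)=\varphi'(h)(1)$, so $\sigma(i)$ is independent of the choice of $g_i$; symmetrically $\sigma$ is a bijection, and $\sigma(1)=1$ because $g_1\in\Delta$, so $\sigma\in U_n$. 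Finally I would verify the intertwining relation $\sigma\circ\varphi(g)=\varphi'(g)\circ\sigma$ for all $g$: writing $j=\varphi(g)(i)$ with witness $gg_i$ (since $\varphi(gg_i)(1)=\varphi(g)(\varphi(g_i)(1))=\varphi(g)(i)=j$), the definition of $\sigma$ gives $\sigma(j)=\varphi'(gg_i)(1)=\varphi'(g)\big(\varphi'(g_i)(1)\big)=\varphi'(g)(\sigma(i))$, which is exactly $\varphi'=\iota_\sigma\circ\varphi=\sigma\cdot\varphi$. Together with surjectivity and invariance on orbits, this yields the claimed one-to-one correspondence.
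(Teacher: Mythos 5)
Your proof is correct and follows essentially the same route as the paper: the same freeness computation $\sigma(j)=\sigma(\varphi(g)(1))=\varphi(g)(\sigma(1))=j$ for part (i), and for part (ii) the same map $\varphi\mapsto\varphi^{-1}(U_n)$, the same coset-action construction for surjectivity, and the same explicit permutation $\sigma(i)=\varphi'(g_i)(1)$ with the same intertwining check for injectivity. Your write-up is in fact slightly more careful than the paper's in spelling out well-definedness, bijectivity, and $\sigma(1)=1$, which the paper leaves implicit.
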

\begin{proof}
(i) Suppose that $\sigma\cdot\varphi=\varphi$ for some $\sigma\in U_n$
and some $\varphi\in T_n(\Gamma)$. By transitivity, given $j\in[n]$,
there exists $\gamma_j\in\Gamma$ such that $\varphi(\gamma_j)(1)=j$,
and hence 
\begin{align*}
\sigma(j) &= \sigma(\varphi(\gamma_j)(\sigma^{-1}(1)))\\
&= (\sigma\cdot\varphi)(\gamma_j)(1)\\
&= \varphi(\gamma_j)(1)\\
&= j.
\end{align*}
Since $j$ was arbitrary, we conclude that $\sigma=\mathrm{id}_{[n]}$; thus, the
action of $U_n$ on $T_n(\Gamma)$ is free as claimed. The particular
statement follows from this and the fact that $U_n\cong S_{n-1}$.

(ii) Given $\varphi\in T_n(\Gamma)$, the subgroup
\[
\Delta_\varphi := \mathrm{stab}_\varphi(1) = \varphi^{-1}(U_n)
\]
has index $n$ in $\Gamma$, since a set of elements
$\{\gamma_1,\gamma_2,\ldots,\gamma_n\}$ chosen as in part~(i) forms a
system of (left) coset representatives for $\Gamma$ modulo
$\Delta_\varphi$. Further, if $\varphi_2=\sigma\cdot\varphi_1$ for
some $\sigma\in U_n$, then 
\begin{align*}
\Delta_{\varphi_2} &= \varphi_2^{-1}(U_n)\\
&= (\iota_\sigma\circ\varphi_1)^{-1}(U_n)\\
&= \varphi_1^{-1}(\iota_{\sigma^{-1}}(U_n))\\
&= \varphi_1^{-1}(U_n)\\
&= \Delta_{\varphi_1}.
\end{align*}
Hence, the assignment $\varphi\mapsto\Delta_\varphi$ induces a 
well-defined map
\[
\Phi: {U_n}\backslash{T_n(\Gamma)} \longrightarrow\big\{\Delta:\, 
\Delta\leq\Gamma,\, (\Gamma:\Delta)=n\big\}.
\]
A subgroup $\Delta\leq\Gamma$ of index $n$ in $\Gamma$ induces a
transitive $\Gamma$-action by left multiplication on the $n$-set
$\Gamma/\Delta$ of left cosets which, after suitable renaming, becomes
a transitive $\Gamma$-action on $[n]$ with the property that 
$\mathrm{stab}(1)=\Delta$. This shows that $\Phi$ is surjective.

To prove injectivity, suppose that $\varphi_1,\varphi_2\in
T_n(\Gamma)$ are two transitive permutation representations of
$\Gamma$ on $[n]$, such that $\Delta_{\varphi_1}=\Delta_{\varphi_2}$.
As before, choose elements $\gamma_1,\gamma_2,\ldots,\gamma_n\in
\Gamma$ such that 
\[
\varphi_1(\gamma_j)(1)=j,\quad 1\leq j\leq n,
\] 
and define a permutation $\sigma_0\in U_n$ via
\[
\sigma_0(j):= \varphi_2(\gamma_j)(1),\quad 1\leq j\leq n.
\]
Then, for $\gamma\in\Gamma$ and $j\in[n]$, and with $\gamma\gamma_j
\underset{\Delta_{\varphi_1}}{\sim} \gamma_k$, we have 
\begin{align*}
\sigma_0(\varphi_1(\gamma)(j)) &=
\sigma_0(\varphi_1(\gamma)(\varphi_1(\gamma_j)(1)))\\
&= \sigma_0(\varphi_1(\gamma\gamma_j)(1))\\
&= \sigma_0(\varphi_1(\gamma_k)(1))\\
&= \sigma_0(k)\\
&= \varphi_2(\gamma_k)(1)\\
&= \varphi_2(\gamma\gamma_j)(1)\\
&= \varphi_2(\gamma)(\varphi_2(\gamma_j)(1))\\
&= \varphi_2(\gamma)(\sigma_0(j)).
\end{align*}
Since $j$ and $\gamma$ were arbitrary, this shows that
$\sigma_0\cdot\varphi_1=\varphi_2$; that is, $\varphi_1$ and
$\varphi_2$ are equivalent under the action of $U_n$, as required. 
\end{proof}
\subsection{The case of Hecke groups}\label{sec:3.2}
Now let $q$ be an odd prime number, and let
\begin{equation}
\label{Eq:HeckeGenerators}
\mathfrak{H}(q) = \big\langle x,y\,\big\vert\, x^2=y^q=1\big\rangle
\end{equation}
be the standard Hecke group attached to $q$. By Kurosh's subgroup theorem, a
subgroup $\Delta\leq\mathfrak{H}(q)$ is of the form 
\begin{equation}
\label{Eq:IsoType}
\Delta\,\cong\,
C_2^{\ast\lambda(\Delta)}\,\ast\,C_q^{\ast\mu(\Delta)}\,\ast\,F_{\nu(\Delta)} 
\end{equation}
with cardinal numbers $\lambda(\Delta), \mu(\Delta)$, and $\nu(\Delta)$.
Moreover, if $\Delta$ has finite index in $\mathfrak{H}(q)$, comparing
the Euler characteristic of $\Delta$ with that of $\mathfrak{H}(q)$
shows that $\lambda(\Delta), 
\mu(\Delta),\nu(\Delta)$ are finite, and are connected to the index
$(\mathfrak{H}(q):\Delta)$ via the relation 
\begin{equation}
\label{Eq:Type/IndexRel}
q\lambda(\Delta) + 2(q-1)\mu(\Delta) + 2q(\nu(\Delta)-1) =
(q-2)(\mathfrak{H}(q):\Delta). 
\end{equation} 
Our next result which, in the case of the modular group, goes back to
Millington \cite[Theorem~1]{Millington}, provides a refinement of the
bijection $\Phi$ in the proof of Proposition~\ref{Prop:TransReps}(ii)
by taking into account the isomorphism type 
\[
{\bf t}(\Delta) = (\lambda(\Delta), \mu(\Delta),\nu(\Delta))
\]
of a finite-index subgroup $\Delta$ in $\mathfrak{H}(q)$.
\begin{proposition}
\label{Prop:TransRepsRefined}
Let $q$ be an odd prime, let $\Delta$ be a subgroup
of finite index $n$ in $\mathfrak{H}(q),$ and let
$\varphi:\mathfrak{H}(q)\rightarrow S_n$ be a transitive permutation
representation of $\mathfrak{H}(q)$ such that $\Delta_\varphi=\Delta$.
Then $\varphi(x)$ has precisely $\lambda(\Delta)$ fixed points, and 
$\varphi(y)$ has exactly $\mu(\Delta)$ fixed points, where $x,y$ are
as in {\em (\ref{Eq:HeckeGenerators})}. 
\end{proposition}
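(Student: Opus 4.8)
The plan is to establish a dictionary between the algebraic invariants $\lambda(\Delta),\mu(\Delta)$ of the subgroup $\Delta$ and the cycle structure of the generators under $\varphi$. The key geometric fact is that the stabilizer $\Delta=\Delta_\varphi=\varphi^{-1}(U_n)$ is conjugate (inside $\mathfrak H(q)$) to the fundamental group of the coset diagram of $\varphi$, and that the free-product decomposition \eqref{Eq:IsoType} can be read off directly from the action of the torsion generators $x,y$ on $[n]$. Concretely, I would invoke the standard Reidemeister--Schreier / Bass--Serre picture for $\mathfrak H(q)=C_2*C_q$: the coset diagram is a graph whose vertices are the $n$ cosets, with one $x$-edge-structure (an involution, so a union of transpositions and fixed points) and one $y$-edge-structure (a permutation of order dividing $q$, so a union of $q$-cycles and fixed points). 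Since $\mathfrak H(q)$ acts on its Bass--Serre tree, $\Delta$ acts on the same tree, and the Kurosh decomposition of $\Delta$ is governed by the conjugacy classes of $\Delta$-stabilizers of the edges/vertices, i.e.\ precisely by how $\Delta$ meets the conjugates of $\langle x\rangle\cong C_2$ and $\langle y\rangle\cong C_q$.

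First I would make the free factors $C_2^{*\lambda}$ visible. A conjugate $g\langle x\rangle g^{-1}$ meets $\Delta$ nontrivially exactly when $\Delta$ contains some conjugate of $x$; translating through the bijection $\Phi$, this corresponds to a coset fixed by $\varphi(x)$. More precisely, each fixed point of the involution $\varphi(x)$ on $[n]$ yields an element of order $2$ lying in $\Delta$ (up to conjugacy) and contributes exactly one free factor $C_2$ to the Kurosh decomposition; the transpositions of $\varphi(x)$, on the other hand, identify pairs of cosets and contribute nothing of order $2$. So $\lambda(\Delta)$ equals the number of fixed points of $\varphi(x)$. Symmetrically, I would argue that each fixed point of the order-$q$ permutation $\varphi(y)$ gives a conjugate of $\langle y\rangle\cong C_q$ meeting $\Delta$ and hence one free factor $C_q$, while each genuine $q$-cycle of $\varphi(y)$ contributes none; thus $\mu(\Delta)$ equals the number of fixed points of $\varphi(y)$. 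The free rank $\nu(\Delta)$ is then pinned down by the Euler-characteristic relation \eqref{Eq:Type/IndexRel}, which serves as a consistency check rather than something needing independent proof.

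For the cleanest write-up I would phrase the fixed-point count as an Euler-characteristic bookkeeping on the quotient graph of groups $\Delta\backslash T$, where $T$ is the Bass--Serre tree of $\mathfrak H(q)$: the vertices of $\Delta\backslash T$ with nontrivial $C_2$-stabilizer are in bijection with the $\varphi(x)$-fixed points, those with nontrivial $C_q$-stabilizer are in bijection with the $\varphi(y)$-fixed points, and the Kurosh theorem then reads the free factors directly off this quotient graph. The main obstacle is the bijection at the level of \emph{isomorphism types} rather than mere counts: one must verify that distinct fixed points really produce distinct (non-conjugate-into-a-common-factor) generators of $\Delta$, so that they contribute independent free factors and are not accidentally amalgamated. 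This is exactly where the free-product (rather than amalgamated) structure of $\mathfrak H(q)=C_2*C_q$ is essential, since the edge groups are trivial and no identifications among the vertex stabilizers can occur; I would spell out this triviality-of-edge-stabilizers step carefully, as it is what makes the fixed-point counts add up cleanly to $\lambda$ and $\mu$ with no correction terms.
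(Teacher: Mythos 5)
Your proposal is correct in substance, but it reaches the conclusion by a genuinely different route than the paper. You work through Bass--Serre theory: fixed points of $\varphi(x)$ are matched (via a double-coset computation) with the $\Delta$-orbits of $C_2$-type vertices of the tree having nontrivial stabilizer, and the structure theorem for groups acting on trees with trivial edge stabilizers then exhibits $\Delta$ as the free product of these vertex stabilizers with a free group; the counts $\lambda(\Delta)$ and $\mu(\Delta)$ then follow from uniqueness of the Kurosh decomposition. The paper never mentions trees or graphs of groups here: it constructs an explicit map
$j\mapsto \mathcal{C}_j:=\big\{\delta^{-1}x^{\gamma_j}\delta:\ \delta\in\Delta\big\}$
from fixed points of $\varphi(x)$ to conjugacy classes of elements of order $2$ in $\Delta$, proves surjectivity using the torsion theorem for free products (Theorem~1.6 in Chapter~IV of \cite{LS}), and proves injectivity using the fact that the centralizer of $x$ in $C_2\ast C_q$ is $\langle x\rangle$ (\cite[Cor.~4.1.6]{MKS}); since $\Delta$ has exactly $\lambda(\Delta)$ conjugacy classes of involutions (and $\mu(\Delta)$ classes of cyclic subgroups of order $q$), the proposition follows. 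One point of comparison: the step you flag as the ``main obstacle'' --- that distinct fixed points are not accidentally merged into a common free factor --- is automatic in the Bass--Serre formalism once you have the bijection with vertex orbits (distinct vertices of the quotient graph of groups contribute distinct factors); its hands-on counterpart in the paper is precisely the injectivity argument via centralizers, which is where real work happens in their approach. What your route buys is conceptual transparency and generality: it adapts immediately to arbitrary free products and, with edge-group correction terms, to amalgams such as the groups $\Gamma_m(q)$ themselves. What the paper's route buys is self-containedness: two classical citable facts about free products and elementary conjugacy manipulations, with no graph-of-groups machinery. Both arguments ultimately lean on the same standard uniqueness facts for free product decompositions, so neither is circular; yours is a legitimate alternative proof.
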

\begin{proof}
As in the proof of Proposition~\ref{Prop:TransReps}, choose elements
$\gamma_1,\gamma_2,\ldots,\gamma_n\in\mathfrak{H}(q)$ such that 
\[
\varphi(\gamma_j)(1) = j,\quad 1\leq j\leq n;
\]
i.e., $\{\gamma_1,\gamma_2,\ldots,\gamma_n\}$ is a left transversal
for $\mathfrak{H}(q)$ modulo $\Delta$. The equation $\varphi(x)(j)=j$
is equivalent to the condition that $x^{\gamma_j}\in\Delta$; in
particular, the assignment 
\[
j\mapsto\mathcal{C}_j:=\big\{\delta^{-1} x^{\gamma_j} \delta:\,
\delta\in\Delta\big\} 
\]
defines a mapping $\Phi_x$ from the set of fixed points of
$\varphi(x)$ to the set of conjugacy classes of elements of order $2$
in $\Delta$. If 
\[
\mathcal{C} = \big\{\delta^{-1} x' \delta:\,
\delta\in\Delta\big\}\subseteq\Delta 
\]
is such a conjugacy class, then $x'$ is in $\Delta$ and has order $2$;
thus, by the 
torsion theorem for free products 
(cf., for instance,
Theorem~1.6 in Chapter~IV of \cite{LS}), there exists an element
$\gamma=\gamma_j\delta$ such that 
\[
x' = \gamma^{-1} x \gamma = \delta^{-1} x^{\gamma_j} \delta.
\]
It follows that $x^{\gamma_j}\in\Delta$, i.e., $\varphi(x)(j)=j$, and
$\mathcal{C}=\mathcal{C}_j$; that is, $\Phi_x$ is surjective. To prove
injectivity, let $j$ and $k$ be fixed points of $\varphi(x)$, and
suppose that $\mathcal{C}_j=\mathcal{C}_k$; that is, 
\[
\delta^{-1} x^{\gamma_j} \delta = x^{\gamma_k}.
\]
Consequently, the element $\gamma_j\delta\gamma_k^{-1}$ centralizes
the generator $x$, implying $\gamma_j\delta\gamma_k^{-1}=x^\epsilon$
with $\epsilon\in\{0,1\}$ by \cite[Cor.~4.1.6]{MKS}. Hence, 
\[
j = \varphi(\gamma_j\delta\gamma_k^{-1})(k) = \varphi(x^\epsilon)(k) = k,
\]
as desired. We conclude that the fixed points of $\varphi(x)$ are in
one-to-one correspondence with the $\lambda(\Delta)$ conjugacy classes of
elements of order $2$ in $\Delta$. By a similar argument, the fixed
points of $\varphi(y)$ are in one-to-one correspondence with the
$\mu(\Delta)$ conjugacy classes of cyclic subgroups of order $q$ in
$\Delta$, completing the proof. 
\end{proof}
It is customary to translate these facts into a geometric language. To
every transitive permutation representation
$\varphi:\mathfrak{H}(q)\rightarrow S_n$ there corresponds a diagram
$D_\varphi$ consisting of $n$ labelled vertices, red undirected loops,
blue undirected loops, red undirected edges, and blue directed edges,
constructed as follows: the vertices of $D_\varphi$ are labelled with
the elements of the standard set $[n]$; for $i,j\in[n]$ such that
$\varphi(x)(i)=j$, the vertices labelled $i$ and $j$ are joined by an
undirected red edge (a loop if $i=j$); for $i,j\in[n]$ with $i\neq j$
and $\varphi(y)(i)=j$, we draw a directed blue edge from vertex $i$ to
vertex $j$, while, for $i=j$, we attach an undirected blue loop to
vertex $i$. In this way, the set $T_n(\mathfrak{H}(q))$ is in
bijective correspondence with the set $\mathcal{D}_{n,q}$ of diagrams
on $n$ vertices labelled with the elements of the standard set $[n]$,
such that 
\begin{enumerate}
\item[(a)] each vertex has a red loop, or is incident with exactly one 
red edge,
\item[(b)] each vertex has a blue loop, or is contained in precisely
one oriented blue $q$-gon,
\item[(c)] the red and blue edges together give a connected figure.
\end{enumerate}
The elements of $\mathcal{D}_{n,q}$ are the \textit{coset diagrams of
order $n$} associated with the Hecke group $\mathfrak{H}(q)$. A
permutation $\sigma\in S_n$ induces a map $\sigma:
\mathcal{D}_{n,q}\rightarrow \mathcal{D}_{n,q}$; two diagrams $D_1,
D_2$ are called equivalent, if $\sigma(D_1)=D_2$ for some permutation
$\sigma\in U_n$. The set of orbits ${U_n}\backslash{\mathcal{D}_{n,q}}$ is
in bijective correspondence with the subgroups of index $n$ in
$\mathfrak{H}(q)$, and each equivalence class of diagrams has $(n-1)!$
elements. Further, if $D$ is a diagram corresponding to the index $n$
subgroup $\Delta_D\leq\mathfrak{H}(q)$, then $D$ contains precisely
$\lambda(\Delta_D)$ red loops and $\mu(\Delta_D)$ blue loops. Our next
result, while spelling out certain numerical constraints, also
provides a geometric interpretation for the quantity $\nu(\Delta_D)$.

\begin{proposition}
\label{Prop:DiagramConstraints}
If $D\in\mathcal{D}_{n,q}$ is a diagram containing $k$ blue $q$-gons
and $e$ 
red edges, and with associated subgroup $\Delta_D,$ then 
\begin{enumerate}
\item $\frac{\displaystyle n}{\displaystyle q} \geq k \geq
\frac{\displaystyle n-e-1}{\displaystyle q-1},$ 
\vspace{2mm}
\item $\frac{\displaystyle n}{\displaystyle 2} \geq e\geq
n-(q-1)k-1,$
\vspace{2mm}
\item $e+(q-1)k-n+1 = \nu(\Delta_D)$.
\end{enumerate}
\end{proposition}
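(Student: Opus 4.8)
The plan is to reduce all three assertions to part~(iii), which is the one substantive identity, and then read off the two pairs of inequalities from the non-negativity of the structure constants $\lambda(\Delta_D)$, $\mu(\Delta_D)$, $\nu(\Delta_D)$ attached to $\Delta_D$ via~(\ref{Eq:IsoType}).

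First I would convert the edge counts of $D$ into fixed-point counts of the underlying permutations. Choose a transitive representation $\varphi$ with $\Delta_\varphi = \Delta_D$, so that $D = D_\varphi$. Since $x^2 = 1$, the permutation $\varphi(x)$ is an involution, and its cycle decomposition consists only of fixed points (the red loops) and transpositions (the $e$ red edges); counting the $n$ vertices gives $n = (\text{number of fixed points of }\varphi(x)) + 2e$. By Proposition~\ref{Prop:TransRepsRefined} the number of fixed points of $\varphi(x)$ is exactly $\lambda(\Delta_D)$, whence $\lambda(\Delta_D) = n - 2e$. Likewise, because $q$ is prime and $y^q = 1$, every cycle of $\varphi(y)$ has length $1$ or $q$, so $\varphi(y)$ splits into fixed points (blue loops) and the $k$ blue $q$-gons; counting vertices yields $n = \mu(\Delta_D) + qk$, i.e. $\mu(\Delta_D) = n - qk$.

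The core step is then part~(iii). Here I would substitute these two expressions for $\lambda(\Delta_D)$ and $\mu(\Delta_D)$, together with $(\mathfrak{H}(q):\Delta_D) = n$, into the Euler-characteristic relation~(\ref{Eq:Type/IndexRel}). Upon expanding, the $n$-terms collapse, and after dividing through by $2q$ the relation simplifies to $\nu(\Delta_D) = e + (q-1)k - n + 1$, which is precisely~(iii). This is the only place where any genuine computation occurs, and the single mild pitfall is keeping track of the coefficient of $n$ on both sides; everything else is formal.

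Finally, the four inequalities are immediate consequences of the non-negativity of the three (finite) structure constants. From $\mu(\Delta_D) = n - qk \geq 0$ I obtain the upper bound $k \leq n/q$ in~(i); from $\lambda(\Delta_D) = n - 2e \geq 0$ the upper bound $e \leq n/2$ in~(ii); and from $\nu(\Delta_D) = e + (q-1)k - n + 1 \geq 0$ both remaining lower bounds, namely $e \geq n - (q-1)k - 1$ and, after rearranging, $k \geq (n-e-1)/(q-1)$. Thus I expect no real obstacle: the whole proposition is essentially a repackaging of relation~(\ref{Eq:Type/IndexRel}) under the three non-negativity constraints $\lambda(\Delta_D),\mu(\Delta_D),\nu(\Delta_D)\geq 0$.
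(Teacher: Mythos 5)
Your proof is correct, and it differs from the paper's in a meaningful way. Both arguments share the same core step: translating red edges and blue $q$-gons into fixed-point counts via Proposition~\ref{Prop:TransRepsRefined} (so $\lambda(\Delta_D)=n-2e$, $\mu(\Delta_D)=n-qk$) and then substituting into the Euler-characteristic relation \eqref{Eq:Type/IndexRel} to get (iii). Where you diverge is in how the inequalities are obtained: you read \emph{all four} of them off the non-negativity of the Kurosh invariants $\lambda,\mu,\nu$, whereas the paper argues combinatorially for two of them --- disjointness of the $q$-gons gives $qk\le n$, and a connectivity argument (each blue-loop vertex must be red-joined to a $q$-gon, and a spanning-tree count forces at least $k-1$ further red edges between distinct $q$-gons) gives $e\ge n-(q-1)k-1$; only the remaining bounds are then deduced from $\lambda(\Delta_D),\nu(\Delta_D)\ge0$. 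Your purely algebraic route is cleaner and uniform in $n$: the paper's connectivity argument breaks down for $n\le2$, which is why its proof opens with a separate verification of the cases $n=1,2$, a case split you avoid entirely. What the paper's combinatorial argument buys in exchange is geometric insight into \emph{why} the lower bound holds at the level of diagrams; this is exactly what underlies the remark following the proposition, that conversely any parameters $(n,k,e)$ satisfying (i) and (ii) are realized by some diagram --- a statement your non-negativity argument cannot produce, since it only constrains diagrams that already exist.
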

\begin{proof}
If $n=1$, then $D$ consists of one vertex labelled $1$, with a red and
a blue loop attached to it. Thus, $k=e=0$ and, by
(\ref{Eq:Type/IndexRel}), $\nu(\Delta_D)=0$. Similarly, for $n=2$, the
diagram $D$ consists of two vertices labelled $1$ and $2$,
respectively, each having a blue loop attached to it, and joined by a
red edge. Thus, $k=0$, $e=1$, and, by (\ref{Eq:Type/IndexRel}),
$\nu(\Delta_D)=0$. One checks that in both cases assertions (i)--(iii)
hold true. Hence, for the rest of the proof, we may assume that $n>2$.

Since, by (b), the $q$-gons are disjoint, we must have $kq\leq n$; the
$n-qk$ vertices not involved in a $q$-gon must carry blue loops. By
(c), there exist at least $k-1$ red edges joining vertices of distinct
blue $q$-gons, and a further $n-qk$ red edges, each joining a vertex
with a blue loop to a vertex of a $q$-gon (here we need that $n>2$).
Thus, there are at least $n-(q-1)k-1$ red edges. Since
\[
\mu(\Delta_D) = n-qk
\]
and, in view of (a),
$$
\lambda(\Delta_D) = n - 2e.
$$
Equation (\ref{Eq:Type/IndexRel}) shows that indeed
$e+(q-1)k-n+1=\nu(\Delta_D)$. The remaining inequalities follow since
$\lambda(\Delta_D), \nu(\Delta_D)\geq0$. 
\end{proof}

It is not hard to see that, conversely, whenever inequalities (i) and
(ii) are satisfied, a diagram $D$ with specifications as described in
Proposition~\ref{Prop:DiagramConstraints} does indeed exist.

\section{Counting finite-index subgroups in Hecke groups via coset diagrams}
\label{Sec:IsoType}

The main purpose of this section is to establish
the fact that, for every odd prime $q$ and each integer
$n\geq1$, we have 
\begin{equation}
\label{Eq:NqnParity}
M_q(n) \equiv 0\ \mathrm{mod}\ 2,
\end{equation}
so that these numbers do in fact not enter into the mod~$2$
calculation of $s_{\Gamma_m(q)}^H(n)$, despite their appearance in
Proposition~\ref{Prop:Reduction1}; cf.\ Theorem~\ref{Thm:NqnParity}.

If $\Delta$ is a subgroup of index $n$ in $\mathfrak{H}(q)$ with
$\mu(\Delta)=0$, then we must have $q\mid n$ by
Equation~(\ref{Eq:Type/IndexRel}) plus the fact that $q$ is a
prime. Hence, 
\begin{equation} \label{eq:qnmidn}
M_q(n) = 0,\quad q\nmid n;
\end{equation}
in particular, $M_q(n)\equiv0$~(mod~$2$) in this case. We are thus reduced
to checking the case where $n=qk$ with $k\geq1$. Using coset
enumeration techniques, the numbers $M_q(qk)$ are expressed as 
a sum of certain combinatorially defined quantities $M_q(qk;e,k)$ divided 
by $q^{k-1}(k-1)!$, see
Corollary~\ref{Cor:SubgpMu=0Enum}(ii). As a next step, an
explicit formula for $M_q(qk;e,k)$ is found in
Proposition~\ref{Prop:NqComp}. Subsequently, this explicit formula is used
to show that $M_q(qk)$ is always even for $q\ge5$; see
Proposition~\ref{Prop:Nq2Part}. Finally, the corresponding fact for $q=3$ 
is established in Proposition~\ref{Prop:N3Parity}, building on the work
of Stothers \cite{Stothers}. A curious side result is
Proposition~\ref{Prop:GJ}, which provides a closed form product formula
for $M_q(qk;e,k)$ in the minimal case $e=k-1$.
As already mentioned in the introduction, the next subsection
addresses, in complete generality, the problem of enumerating the
index $n$ subgroups in $\mathfrak H(q)$ which are isomorphic to
$C_2^{\ast\lambda} \ast C_q^{\ast\mu} \ast F_{\nu}$ for fixed
$\lambda,\mu,\nu$.

\subsection{Enumerating finite-index subgroups of given type}
\label{sec:4.1}
Let $n,m_1,m_2$ be integers with $n>0$ and $m_1,m_2\ge0$. 
Suppose that we are given a set of $m_2$ disjoint oriented blue
$q$-gons, the vertices of the $i$-th one being labelled
$q(i-1)+1,\dots,qi-1,qi$ in order, as well as $n-qm_2$ additional
vertices labelled $qm_2+1,qm_2+2,\dots,n$. 
We consider (mixed) graphs resulting from the previously described
$q$-gons and additional vertices
by drawing $m_1$ undirected red edges, such that each vertex is
incident with at most one red edge, and such that a connected graph
is obtained. (Here, for connectivity, {\it both\/} the blue and red
edges are taken into account.) Let $\mathcal M_q(n;m_1,m_2)$ be
the set of all these graphs, and let $M_q(n;m_1,m_2)$ be the
cardinality of $\mathcal M_q(n;m_1,m_2)$.

Denote by $s_q(n;m_1,m_2)$ the number of index $n$ subgroups
$\Delta$ in $\mathfrak H(q)$ of representation type
\begin{equation} \label{Eq:rept}
{\bf m}(\Delta) = \begin{pmatrix}
              m_1&n-2m_1\\
              m_2&n-qm_2
              \end{pmatrix};
\end{equation}
that is, the generator $x$ in (\ref{Eq:HeckeGenerators}) 
acts as a product of $m_1$ $2$-cycles and
$n-2m_1$ fixed points on the coset space
$\mathfrak{H}(q)/\Delta$, while the generator $y$ in
(\ref{Eq:HeckeGenerators}) acts as a 
product of $m_2$ $q$-cycles and $n-qm_2$ fixed points on
$\mathfrak{H}(q)/\Delta$.
We note that in our situation the representation type 
${\bf m}(\Delta)$ and the isomorphism type ${\bf t}(\De)$ of a finite
index subgroup $\De$ determine each other. More precisely, 
if $\De$ is of index $n$ in $\mathfrak H(q)$, then ${\bf m}(\Delta)$
is given by \eqref{Eq:rept} if and only if ${\bf t}(\De)$ is given by
\begin{equation} \label{Eq:isot}
{\bf t}(\De)=(n-2m_1,n-qm_2,m_1+(q-1)m_2-n+1).
\end{equation}
The numbers $M_q(n;m_1,m_2)$ and $s_q(n;m_1,m_2)$ are related
in the following way.
\begin{lemma} \label{Lem:Zus}
For an odd prime $q$ and integers $n,m_1,m_2$ with $n>0$ and
$m_1,m_2\ge0,$ we have
\begin{equation} \label{Eq:Zus}
s_q(n;m_1,m_2)
=\frac {n M_q(n;m_1,m_2)} {q^{m_2}m_2!\,(n-qm_2)!}.
\end{equation}
\end{lemma}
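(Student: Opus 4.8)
The plan is to count in two ways the set $\mathcal D_{n,q}^{(m_1,m_2)}$ of all coset diagrams on $[n]$ whose associated subgroup has representation type $(m_1,m_2)$, i.e.\ whose red part consists of $m_1$ edges and $n-2m_1$ loops and whose blue part consists of $m_2$ oriented $q$-gons and $n-qm_2$ loops. First I would invoke Proposition~\ref{Prop:TransReps} together with the bijection $T_n(\mathfrak H(q))\leftrightarrow\mathcal D_{n,q}$ and Proposition~\ref{Prop:TransRepsRefined}. Since the representation type is constant on each $U_n$-orbit (relabelling by $\sigma\in U_n\subseteq S_n$ conjugates $\varphi(x)$ and $\varphi(y)$, preserving their cycle types), and since by Proposition~\ref{Prop:TransReps}(ii) the $U_n$-orbits corresponding to index $n$ subgroups are exactly $s_q(n;m_1,m_2)$ many among those of type $(m_1,m_2)$, each of size $(n-1)!$ by part~(i), one obtains
\[
\big\vert\mathcal D_{n,q}^{(m_1,m_2)}\big\vert=(n-1)!\,s_q(n;m_1,m_2).
\]

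Second, I would count the same set by first prescribing the blue part and then completing with red edges. The blue part of a diagram in $\mathcal D_{n,q}^{(m_1,m_2)}$ is a permutation $\varphi(y)\in S_n$ of cycle type $q^{m_2}1^{\,n-qm_2}$, and there are exactly $n!/\bigl(q^{m_2}m_2!\,(n-qm_2)!\bigr)$ such permutations. By construction, $\mathcal M_q(n;m_1,m_2)$ is precisely the set of those diagrams in $\mathcal D_{n,q}^{(m_1,m_2)}$ whose blue part is the \emph{canonical} one (the $i$-th $q$-gon on the vertices $q(i-1)+1,\dots,qi$ in order, blue loops on $qm_2+1,\dots,n$), so $M_q(n;m_1,m_2)$ is the number of admissible red completions of the canonical blue structure.

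The crux of the argument is that this number of red completions is \emph{independent of the chosen blue structure}. If $\pi\in S_n$ conjugates the canonical blue permutation to another blue permutation of the same cycle type, then $\sigma_x\mapsto\pi\sigma_x\pi^{-1}$ is a bijection between the admissible red parts for the two blue structures, because relabelling preserves the number of red edges, the condition that each vertex meet at most one red edge, and (crucially) the connectivity requirement~(c). Transitivity of $S_n$ on permutations of a fixed cycle type then shows that each of the $n!/\bigl(q^{m_2}m_2!\,(n-qm_2)!\bigr)$ blue structures admits exactly $M_q(n;m_1,m_2)$ red completions, whence
\[
\big\vert\mathcal D_{n,q}^{(m_1,m_2)}\big\vert=\frac{n!}{q^{m_2}m_2!\,(n-qm_2)!}\,M_q(n;m_1,m_2).
\]

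Finally, equating the two expressions for $\big\vert\mathcal D_{n,q}^{(m_1,m_2)}\big\vert$ and cancelling $(n-1)!$ yields \eqref{Eq:Zus}. The only genuinely delicate point is the independence of the red-completion count from the blue structure, which I expect to be the main obstacle; everything else is routine bookkeeping with the cycle-type count and the orbit size $(n-1)!$ supplied by Proposition~\ref{Prop:TransReps}.
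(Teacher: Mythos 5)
Your proof is correct and takes essentially the same route as the paper: both arguments double-count the coset diagrams of representation type $(m_1,m_2)$, obtaining $(n-1)!\,s_q(n;m_1,m_2)$ from Proposition~\ref{Prop:TransReps} on one side and $M_q(n;m_1,m_2)$ times the number of possible blue structures on the other. Your count of blue permutations of cycle type $q^{m_2}1^{n-qm_2}$, together with the conjugation argument showing the number of red completions is independent of the blue structure, is just a more explicit reformulation of the paper's count $\binom{n}{qm_2}\frac{(qm_2)!}{m_2!\,q^{m_2}}$ of labellings of the canonical $q$-gon configuration.
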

\begin{proof} 
Clearly, a graph $G\in \mathcal M_q(n;m_1,m_2)$ can be made into
a coset diagam $D\in \mathcal D_{n,q}$ by adding a red loop at each
vertex not incident with a red edge and a blue loop at each vertex
not incident with a blue $q$-gon. The subgroup $\Delta_D$ corresponding
to such a diagram has index $n$ and representation type 
$\left(\begin{smallmatrix}
m_1&n-2m_1\\m_2&n-qm_2\end{smallmatrix}\right)$.
Conversely, such a subgroup leads to a graph 
$G\in \mathcal M_q(n;m_1,m_2)$ for {\it some} labelling of the
vertices by deleting all red and blue loops in a corresponding coset diagram.

In order to construct all possible labellings, first choose labels
for the vertices involved in some $q$-gon in $\binom n{qm_2}$ ways.
These chosen labels can be used to label the $m_2$ $q$-gons in
$$\frac {1} {m_2!}\binom {qm_2}{q,\dots,q}(q-1)!^{m_2}=
\frac {(qm_2)!} {m_2!\,q^{m_2}}$$
ways. On the other hand, there is only one way to label the
additional $(n-qm_2)$ vertices by the remaining labels, these being
completely indistinguishable. Hence, the
number of coset diagrams corresponding to subgroups of
index $n$ and representation type 
$\left(\begin{smallmatrix}
m_1&n-2m_1\\m_2&n-qm_2\end{smallmatrix}\right)$ is
$$
\binom n{qm_2}\frac {(qm_2)!} {m_2!\,q^{m_2}}
M_q(n;m_1,m_2).
$$
Our claim follows now from Proposition~\ref{Prop:TransReps} upon
little simplification.
\end{proof}
Relation (\ref{Eq:Zus}) allows us to compute the subgroup numbers
$s_q(n;m_1,m_2)$ in terms of the geometrically defined quantities
$M_q(n;m_1,m_2)$. As our next result shows, it is enough to
consider the latter numbers in the case when $q\mid n$.

\begin{lemma} \label{Lem:ZusN}
For an odd prime $q$ and integers $n,m_1,m_2$ with $n>0$ and
$m_1,m_2\ge0,$ we have
\begin{equation} \label{Eq:ZusN}
M_q(n;m_1,m_2)
=\frac {(2n-2m_1-qm_2)!} {(n-2m_1)!} M_q(qm_2;m_1+qm_2-n,m_2) .
\end{equation}
\end{lemma}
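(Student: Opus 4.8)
The plan is to exhibit an explicit fibration
\[
\Psi : \mathcal M_q(n;m_1,m_2) \longrightarrow \mathcal M_q(qm_2;\, m_1+qm_2-n,\, m_2)
\]
obtained by deleting from a graph $G$ all vertices \emph{not} lying on a blue $q$-gon, together with the red edges incident to them, and then to show that every fibre of $\Psi$ has exactly $\frac{(2n-2m_1-qm_2)!}{(n-2m_1)!}$ elements. Write $m_1':=m_1+qm_2-n$. I assume $m_2\ge1$ throughout (the relevant case; with no $q$-gon the connectivity constraint is degenerate, and the out-of-range cases $n-2m_1<0$ or $m_1'<0$ make both sides vanish).

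The key structural observation is that in any $G\in\mathcal M_q(n;m_1,m_2)$ with $m_2\ge1$, each of the $n-qm_2$ vertices not lying on a $q$-gon is incident with \emph{exactly one} red edge, whose other endpoint lies on a $q$-gon. Indeed, such a vertex carries no blue edge, so if it were incident with no red edge it would be isolated, contradicting connectivity (note $n>1$ since $m_2\ge1$); and it is incident with at most one red edge by definition of $\mathcal M_q$. Moreover, if its unique red edge joined it to a second non-$q$-gon vertex, then those two vertices---each having spent its single red edge on the other and carrying no blue edge---would form a connected component disjoint from the $q$-gons, again contradicting connectivity. Hence every non-$q$-gon vertex is a red pendant attached to a \emph{distinct} $q$-gon vertex, the distinctness being forced because each $q$-gon vertex, too, carries at most one red edge. \textbf{This structural claim is the crux of the argument}, and it is precisely here that the connectivity hypothesis is used in an essential way.

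Granting this, $\Psi$ is well defined: deleting the $n-qm_2$ pendant vertices removes $n-qm_2$ red edges, leaving a connected graph (deletion of degree-one vertices preserves connectivity) on the $qm_2$ vertices of the $m_2$ $q$-gons, carrying $m_1-(n-qm_2)=m_1'$ red edges, with the $q$-gon labelling on $1,\dots,qm_2$ untouched; thus $\Psi(G)\in\mathcal M_q(qm_2;m_1',m_2)$. To compute the fibre over a fixed $\bar G\in\mathcal M_q(qm_2;m_1',m_2)$, observe that reconstructing a preimage $G$ amounts exactly to attaching the $n-qm_2$ labelled pendant vertices $qm_2+1,\dots,n$ to pairwise distinct \emph{red-free} vertices of $\bar G$, and that every such attachment yields a valid connected graph in $\mathcal M_q(n;m_1,m_2)$ with each preimage arising uniquely. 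The number of red-free vertices of $\bar G$ is $qm_2-2m_1'=2n-2m_1-qm_2$, a quantity independent of the particular $\bar G$.

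It remains to count the attachments. These are injections from the $n-qm_2$ pendant labels into the set of $2n-2m_1-qm_2$ red-free vertices, and their number is
\[
\frac{(2n-2m_1-qm_2)!}{\bigl((2n-2m_1-qm_2)-(n-qm_2)\bigr)!}
=\frac{(2n-2m_1-qm_2)!}{(n-2m_1)!},
\]
since $(2n-2m_1-qm_2)-(n-qm_2)=n-2m_1$. As this fibre size does not depend on $\bar G$, summing over all $\bar G\in\mathcal M_q(qm_2;m_1',m_2)$ gives $M_q(n;m_1,m_2)=\frac{(2n-2m_1-qm_2)!}{(n-2m_1)!}\,M_q(qm_2;m_1',m_2)$, which is \eqref{Eq:ZusN}.
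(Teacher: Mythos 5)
Your argument is correct and coincides with the paper's own proof: both delete the non-$q$-gon vertices together with their incident red edges to land in $\mathcal M_q(qm_2;m_1+qm_2-n,m_2)$, and both count the reconstructions as placements of the $n-qm_2$ labelled pendant vertices onto pairwise distinct red-free $q$-gon vertices, the paper writing this count as $\binom{2n-2m_1-qm_2}{n-qm_2}(n-qm_2)!$ where you count injections directly. Your explicit verification of the pendant structure and of the constant fibre size merely spells out what the paper leaves implicit, including the tacit restriction to $m_2\ge1$.
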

\begin{proof}
Given a graph in $\mathcal M_q(n;m_1,m_2)$, removal of all
vertices not involved in some $q$-gon together with all incident (red)
edges leaves a graph in $\mathcal M_q(qm_2;m_1+qm_2-n,m_2)$.
Conversely, starting from a graph in $\mathcal
M_q(qm_2;m_1+qm_2-n,m_2)$, among the vertices involved in a $q$-gon
there are $qm_2-2(m_1+qm_2-n)=2n-2m_1-qm_2$ vertices not incident
with a red edge. From these vertices we choose $n-qm_2$ vertices in $\binom
{2n-2m_1-qm_2}{n-qm_2}$ ways and, having chosen them, we connect each
of them by means of a red edge to exactly one of a new set of vertices
labelled $\{qm_2+1,qm_2+2,\dots,n\}$. This last step can be done in
$(n-qm_2)!$ different ways. Hence, in total we obtain the relation
\eqref{Eq:ZusN}.
\end{proof}

Since we shall make use of it later on, we point out that
a combination of Lemmas~\ref{Lem:Zus} and \ref{Lem:ZusN} yields the
relation
\begin{equation} \label{eq:ZussN}
s_q(n;m_1,m_2)
=\frac {n\,(2n-2m_1-qm_2)! } {q^{m_2}m_2!\,(n-qm_2)!\,(n-2m_1)!}
M_q(qm_2;m_1+qm_2-n,m_2) .
\end{equation}
\begin{corollary}
\label{Cor:SubgpMu=0Enum}
{\em(i)} Let $k$ and $e$ be integers with $k\ge1$ and $0\le e\leq qk/2$.
Then the number of subgroups $\Delta$ in $\mathfrak{H}(q)$ of index
$qk$ and type 
\begin{equation} \label{eq:typet}
{\bf t}(\Delta) = \big(qk-2e, 0, e-k+1\big)
\end{equation} 
is 
\[
\frac{M_q(qk;e,k)}{q^{k-1} (k-1)!}.
\]
In  particular, we have
\begin{multline} \label{Eq:Free}
M_q\Big(2q(e-k)/(q-2);e,\frac{2(e-k)}{q-2}\Big) 
= q^{\frac{2(e-k+1)-q}{q-2}}\,
\Big(\frac{2(e-k+1)-q}{q-2}\Big)!\,
f_q\Big(\frac{2q(e-k)}{q-2}\Big),\\
e\geq k+1,\ e\equiv k\ (\mathrm{mod}\ q-2), 
\end{multline}
where $f_q(n)$ is the number of free subgroups of index $n$ in
$\mathfrak{H}(q)$.

{\em(ii)} For $k\geq1,$ the number $M_q(qk)$ of subgroups
$\Delta\leq\mathfrak{H}(q)$ of index $qk$ and with $\mu(\Delta)=0$ is
given by 
\begin{equation}
\label{Eq:SubgpMu=0Enum}
M_q(qk) = \frac {1} {q^{k-1} (k-1)!}\sum_{0\leq e\leq\frac{qk}{2}}
{M_q(qk;e,k)}. 
\end{equation}

{\em(iii)} We have $s_q(1)=s_q(2)=1,$ and, for $n>2,$
\begin{equation}
\label{Eq:sqnComp}
s_q(n) = \sum_{1\leq k\leq\frac{n}{q}}
\,\sum_{0\leq e\leq\frac{n}{2}}\frac{n}{q^k k! }
\,\binom {qk-2e}{n-qk}\,M_q(qk;e,k). 
\end{equation}
\end{corollary}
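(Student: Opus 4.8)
The plan is to read all three identities off the combined relation \eqref{eq:ZussN}, which already fuses Lemmas~\ref{Lem:Zus} and \ref{Lem:ZusN} into a single expression for $s_q(n;m_1,m_2)$ in terms of the geometric quantities $M_q(qm_2;\,\cdot\,,m_2)$. For part~(i) I would first convert the prescribed isomorphism type into a representation type: by \eqref{Eq:isot}, a subgroup of index $n=qk$ has type $(qk-2e,0,e-k+1)$ exactly when $m_1=e$ and $m_2=k$, so the quantity to be counted is $s_q(qk;e,k)$. Substituting $n=qk$, $m_1=e$, $m_2=k$ into \eqref{eq:ZussN} and using $n-qm_2=0$ collapses three of the factorials and yields
\[
s_q(qk;e,k)=\frac{qk}{q^{k}k!}\,M_q(qk;e,k)=\frac{M_q(qk;e,k)}{q^{k-1}(k-1)!},
\]
which is the first assertion. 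The ``in particular'' identity \eqref{Eq:Free} is the \emph{free} specialization: such a subgroup is free precisely when its $C_2$-part vanishes, i.e.\ $\lambda=qk-2e=0$, forcing $e=qk/2$; these are exactly the $f_q(qk)$ free subgroups of index $qk$. Equating counts gives $M_q(qk;qk/2,k)=q^{k-1}(k-1)!\,f_q(qk)$, after which it remains only to rewrite $qk$ as $\frac{2q(e-k)}{q-2}$ (using $e=qk/2$) and to check the one-line simplification $\frac{2(e-k+1)-q}{q-2}=k-1$.

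Part~(ii) is then an immediate summation. A subgroup of index $qk$ has $\mu(\Delta)=0$ iff every vertex of its diagram lies on a $q$-gon, that is $m_2=k$, and its type is $(qk-2e,0,e-k+1)$ for some $e$. Summing the count from part~(i) over all admissible $e$ gives \eqref{Eq:SubgpMu=0Enum}; the terms added by extending the range down to $e=0$ vanish, since $M_q(qk;e,k)=0$ unless $k-1\le e\le qk/2$ (by connectivity of the $k$ $q$-gons and the matching bound on red edges).

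For part~(iii) I would dispose of $n\le 2$ by inspecting diagrams: for $n=1$ there is a single vertex carrying both loops, and for $n=2$ two blue-looped vertices joined by one red edge (a $q$-gon being impossible for odd $q$), whence $s_q(1)=s_q(2)=1$. For $n>2$ one has $m_2\ge 1$ (otherwise $\varphi(y)=\mathrm{id}$ and the image is cyclic of order at most $2$), and $s_q(n)=\sum_{m_1,m_2}s_q(n;m_1,m_2)$. Writing $k=m_2$ and setting $e=m_1+qk-n$ (the number of red edges left in the reduced diagram counted by $M_q(qk;\,\cdot\,,k)$), substitution into \eqref{eq:ZussN} turns the factorial ratio $\frac{(qk-2e)!}{(n-qk)!\,(2qk-n-2e)!}$ into $\binom{qk-2e}{n-qk}$ and reproduces exactly the summand of \eqref{Eq:sqnComp}. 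Summing over $1\le k\le n/q$ and $0\le e\le n/2$ is harmless: every term outside the genuine support contributes $0$, since the binomial vanishes once $e>qk-n/2$ while $M_q(qk;e,k)=0$ for $e<k-1$ or $e>qk/2$.

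The only genuinely delicate point is the bookkeeping behind \eqref{Eq:Free}: one must recognize the displayed arguments as the red-edge count $e=qk/2$ and the $q$-gon count $k$ of a free subgroup of index $qk$, so that the free-subgroup interpretation is legitimate, and then verify that the exponent collapses to $k-1$. Everything else reduces to substituting into \eqref{eq:ZussN} and to the elementary observation that enlarging the summation ranges only introduces vanishing terms.
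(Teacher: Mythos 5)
Your proof is correct and takes essentially the same route as the paper: the paper likewise obtains (i) by substituting $n=qk$, $m_1=e$, $m_2=k$ into \eqref{eq:ZussN} (using \eqref{Eq:Type/IndexRel} to match the type), treats \eqref{Eq:Free} and part (ii) as immediate consequences, and derives (iii) by setting $m_1=e+n-qk$, $m_2=k$ in \eqref{eq:ZussN} and summing over $e$ and $k$. Your reading of \eqref{Eq:Free} as the free specialization $e=qk/2$ (so that $k=2(e-k)/(q-2)$ and the exponent collapses to $k-1$) is exactly the intended interpretation, and your checks that the enlarged summation ranges only add vanishing terms fill in precisely what the paper leaves implicit.
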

\begin{proof}
Setting $(\mathfrak H(q):\Delta)=qk$, $\la(\De)=qk-2e$, and
$\mu(\De)=0$ in Equation~\eqref{Eq:Type/IndexRel}, we find that
$\nu(\De)=e-k+1$, so that ${\bf t}(\De)$ agrees with \eqref{eq:typet}.
Hence, setting $n=qk$, $m_1=e$, and $m_2=k$ in \eqref{eq:ZussN}, the
first part of (i) follows. The particular statement in (i) as well as
Assertion (ii) are immediate consequences. Finally, Assertion~(iii)
results upon setting $m_1=e+n-qk$ and $m_2=k$ in \eqref{eq:ZussN} and
summing over all $e$ and $k$.
\end{proof}

\subsection{Calculation of the numbers $M_q(qk;e,k)$}
\label{sec:4.2}
Our next result provides a rather complicated looking but explicit
formula for the quantities mentioned in the title. This formula in
turn will enable us to determine the parity of the numbers $M_q(n)$
for $q\ge5$.
\begin{proposition}
\label{Prop:NqComp}
For $k\geq1$ and $k-1\leq e\leq\frac{qk}{2},$ we have
\begin{equation}
\label{Eq:NqComp}
M_q(qk;e,k) =
\sum_{\gamma=1}^{k}\,\frac{(-1)^{\gamma-1}}{\gamma}
\underset{\alpha_1+\cdots+\alpha_\gamma=e}
{\sum_{\alpha_1,\ldots,\alpha_\gamma\geq0}}\,
\underset{\rho_1+\cdots+\rho_\gamma=k}
{\sum_{\rho_1,\dots,\rho_\gamma\geq1}}
\binom{k}{\rho_1,\ldots,\rho_\gamma}
\prod_{i=1}^\gamma
\frac {(q\rho_i)!}{2^{\al_i}\alpha_i!\,(q\rh_i-2\al_i)!}.
\end{equation}
\end{proposition}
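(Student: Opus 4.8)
The plan is to recognize $M_q(qk;e,k)$ as the number of \emph{connected} configurations built from $k$ labelled blue $q$-gons by adding $e$ red edges that form a partial matching, and to recover it from the count of \emph{all} (not necessarily connected) such configurations through an exponential-formula inversion in which the $k$ $q$-gons play the role of the labelled atoms. First I would record the unconstrained count: since in $\mathcal M_q(q\rho;e,\rho)$ all $q\rho$ vertices already lie on the $\rho$ prescribed $q$-gons, the only freedom is the placement of the $e$ red edges, a partial matching on $q\rho$ labelled vertices, of which there are exactly $\frac{(q\rho)!}{2^{e}\,e!\,(q\rho-2e)!}$. Accordingly I set
$$
A_\rho(x):=\sum_{e\ge0}\frac{(q\rho)!}{2^{e}\,e!\,(q\rho-2e)!}\,x^{e},
\qquad
C_\rho(x):=\sum_{e\ge0}M_q(q\rho;e,\rho)\,x^{e},
$$
so that $A_\rho(x)$ enumerates all red-edge placements on $\rho$ $q$-gons weighted by the number of red edges, while $C_\rho(x)$ enumerates the connected ones.

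The conceptual heart is that each blue $q$-gon is internally connected, so the whole graph is connected precisely when the multigraph obtained by contracting every $q$-gon to a point (keeping the red edges) is connected. Consequently a red-edge placement on $\rho$ $q$-gons decomposes uniquely along the connected components of this contracted multigraph: the $q$-gons split into blocks, on each block the induced placement is a connected configuration, and no red edge crosses blocks. Since the number of connected configurations on a block depends only on its size (by relabelling the $q$-gons together with their vertices), the exponential formula with the $q$-gons as labelled objects gives
$$
\sum_{k\ge0}A_k(x)\frac{t^{k}}{k!}
=\exp\!\Big(\sum_{\rho\ge1}C_\rho(x)\frac{t^{\rho}}{\rho!}\Big).
$$

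Writing $A(x,t):=\sum_{k\ge0}A_k(x)\,t^{k}/k!$ (with $A_0(x)=1$) and taking logarithms yields $\sum_{\rho\ge1}C_\rho(x)\,t^{\rho}/\rho!=\sum_{\gamma\ge1}\frac{(-1)^{\gamma-1}}{\gamma}\big(A(x,t)-1\big)^{\gamma}$. Extracting the coefficient of $t^{k}/k!$ from $\big(A(x,t)-1\big)^{\gamma}=\big(\sum_{\rho\ge1}A_\rho(x)\,t^{\rho}/\rho!\big)^{\gamma}$ produces the sum over compositions $\rho_1+\cdots+\rho_\gamma=k$ with $\rho_i\ge1$ of $\binom{k}{\rho_1,\dots,\rho_\gamma}\prod_{i}A_{\rho_i}(x)$, and then extracting the coefficient of $x^{e}$ from $\prod_iA_{\rho_i}(x)$ produces the inner sum over $\alpha_1+\cdots+\alpha_\gamma=e$ of $\prod_i\frac{(q\rho_i)!}{2^{\alpha_i}\alpha_i!\,(q\rho_i-2\alpha_i)!}$. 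Assembling these two coefficient extractions reproduces \eqref{Eq:NqComp} exactly.

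The step I expect to require the most care is the justification of the exponential formula: one must verify precisely that connectivity of the coset-diagram graph is equivalent to connectivity of the contracted super-node multigraph (using that each $q$-gon is connected and that $n=qk$ leaves no stray vertices), that the decomposition of a partial matching along its components is a genuine bijection, and that the per-block counts depend on block size alone. Everything downstream is routine coefficient bookkeeping; the stated ranges $k-1\le e\le qk/2$ enter only to delimit where $M_q(qk;e,k)$ is nonzero, since connecting $k$ $q$-gons needs at least $k-1$ red edges while the matching constraint forces $2e\le qk$.
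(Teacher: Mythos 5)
Your proof is correct, and its combinatorial core coincides with the paper's: both decompose a placement of red edges on the $k$ labelled $q$-gons according to the connected components of the multigraph obtained by contracting each $q$-gon to a point, and both use the partial-matching count $\frac{(q\rho)!}{2^{e}\,e!\,(q\rho-2e)!}$ for the unconstrained placements on $\rho$ $q$-gons. Where you differ is in how the inversion from ``all'' to ``connected'' is executed. The paper works by hand on the partition lattice $\Pi(k)$: it introduces the refined counts $M_{=\pi}$ and $M_{\le\pi}$, applies M\"obius inversion, inserts the known value $\mu(\sigma,\hat 1)=(-1)^{\vert\sigma\vert-1}(\vert\sigma\vert-1)!$, and only at the end converts sums over set partitions into sums over ordered compositions, which is where the factor $(-1)^{\gamma-1}/\gamma$ arises as $\mu(\sigma,\hat 1)/\gamma!$ combined with the $\gamma!$ orderings of the blocks. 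You instead package the identical decomposition into the exponential formula for the bivariate series $A(x,t)$ and expand the logarithm; the coefficients $(-1)^{\gamma-1}/\gamma$ of $\log(1+u)$ then play exactly the role of the (symmetrized) partition-lattice M\"obius function, and the multinomial coefficients and composition sums drop out of routine coefficient extraction. The two formalisms are equivalent --- the exponential formula is itself proved by the block decomposition the paper writes out --- so nothing essential changes; what your route buys is brevity and the avoidance of explicit lattice-theoretic input, while the paper's route stays self-contained at the level of finite sums and never invokes two-variable formal power series. The step you flag as delicate --- that connectivity of the configuration is equivalent to connectivity of the contracted multigraph, and that connected counts on a block depend only on its size via simultaneous relabelling of the $q$-gons and their vertices --- is precisely the content the paper verifies when computing $M_{\le\sigma}$ as a product over blocks, so your appeal to the exponential formula is legitimate once that check is made explicit.
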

\begin{proof}
According to the definition of $M_q(qk;e,k)$, we want to
enumerate the elements of the set $\mathcal M_q(qk;e,k)$; that
is, connected graphs consisting of $k$ oriented blue $q$-gons and $e$
(unoriented) red edges connecting vertices of the $q$-gons in 
such a way that no two red edges share a vertex.

Let the $q$-gons be denoted by $P_1,P_2,\dots,P_k$ in order (according
to their smallest vertex label), and 
let $\Pi(k)$ be the set of all partitions of the standard set 
$[k]$.

For a partition $\pi\in \Pi(k)$ with $\ga$ blocks,
we write $M_{=\pi}(k;e_1,e_2,\dots, e_\ga)$ for the number of ways to
draw $e=e_1+e_2+\dots+e_\ga$ red edges among the vertices of the
$q$-gons in such a way that no two edges share a vertex, and 
such that the connectivity
structure of the resulting graph corresponds to the partition $\pi$;
that is, if $\{i_1,i_2,\dots,i_j\}$ is the $l$-th block of
$\pi$ (the blocks can be canonically ordered with respect to their
smallest elements), the $q$-gons
$P_{i_1},P_{i_2},\dots,P_{i_j}$ form a connected component of the
graph and, moreover, there are exactly $e_l$ red edges in this
component. Analogously, let $M_{\le\pi}(k;e_1,e_2,\dots,e_\ga)$ be the
number of ways to draw $e=e_1+e_2+\dots+e_\ga$ red edges 
such that no two edges share a vertex and
such that the
connectivity structure of the resulting graph is described by a
partition which is equal to or finer than $\pi$
(in the usual partial order on
set partitions; cf., for instance, \cite[Example~3.1.1(d)]{StanleyI}), 
and such that the same condition is satisfied
with regard to the distribution of the red edges.

Clearly, for fixed $e$ and $\pi\in\Pi(k)$, and denoting by
$\vert\pi\vert$ the number of blocks of $\pi$, we have 
$$
\sum _{e_1+\dots+e_{\vert\pi\vert}=e}
M_{\le\pi}(k;e_1,e_2,\dots,e_{\vert\pi\vert})=
\sum _{\si\le \pi}
\sum _{f_1+\dots+f_{\vert\sigma\vert}=e} 
M_{=\si}(k;f_1,f_2,\dots,f_{\vert\sigma\vert}).$$
M\"obius inversion (cf.\ \cite[Propositions~2--3]{Rota}) yields
$$
\sum _{e_1+\dots+e_{\vert\pi\vert}=e} 
M_{=\pi}(k;e_1,e_2,\dots,e_{\vert\pi\vert})=
\sum _{\si\le \pi} 
\sum _{f_1+\dots+f_{\vert\sigma\vert}=e} ^{}
\mu(\si,\pi)\,M_{\le\si}(k;f_1,f_2,\dots,f_{\vert\sigma\vert}),$$
where $\mu$ denotes the M\"obius function of the partition lattice
$\Pi(k)$.
In particular, for
$$\pi=\{[k]\}=:\hat 1$$ 
(the maximum element in the partition lattice $\Pi(k)$), we have
\begin{equation} 
M_q(qk;e,k)=M_{=\hat1}(k;e)
=\sum _{\si \in\Pi(k)} ^{}
\sum _{f_1+\dots+f_{\vert\sigma\vert}=e}
  \mu(\si,\hat 1)\,
M_{\le\si}(k;f_1,f_2,\dots,f_{\vert\sigma\vert}).
\label{Eq:max}
\end{equation}
The numbers $M_{\le\si}(k;f_1,f_2,\dots,f_{\vert\si\vert})$ are easily
determined: if $\{i_1,i_2,\dots,i_j\}$ is the $l$-th block of
$\si$, then this means that $f_l$ edges are to be drawn {\it
arbitrarily} among the vertices of the $q$-gons 
$P_{i_1},P_{i_2},\dots,P_{i_j}$, subject only to the restriction that
no two edges are allowed to share a vertex. The number of 
ways to do this is
$$\frac {(jq)(jq-1)\cdots(jq-2f_l+1)} {2^{f_l}f_l!}.$$
Hence, if $\rh_1,\rh_2,\dots,\rh_{\vert\sigma\vert}$ are the block sizes of
$\si$, we have
$$M_{\le\si}(k;f_1,f_2,\dots,f_{\vert\sigma\vert})=
\prod_{i=1}^{\vert\sigma\vert}
\frac {(q\rho_i)!}{2^{f_i}f_i!\,(q\rh_i-2f_i)!}.
$$
The M\"obius function $\mu(\si,\hat 1)$ is known as well, 
namely one has
$$\mu(\si,\hat 1)=(-1)^{\vert\sigma\vert-1}(\vert\sigma\vert-1)!;$$
cf., for instance, \cite[Example~3.10.4]{StanleyI}. 
Finally, for fixed $\rh_1,\rh_2,\dots,\rh_\ga$ with
$\rh_1+\rh_2+\dots+\rh_\ga=k$, the number of {\it ordered\/} partitions 
of $\{1,2,\dots,k\}$ (here, ``ordered" means that the order of the
blocks matters), 
the block sizes of which are $\rh_1,\rh_2,\dots,\rh_\ga$, is given by
$\binom k{\rh_1,\rh_2,\dots,\rh_\ga}$. Every partition in $\Pi(k)$
with $\ga$ blocks giving rise to exactly $\ga!$ {\it ordered\/}
partitions of $\{1,2,\dots,k\}$ by permuting the blocks, we must in
the end divide by $\ga!$ in order to get rid of the overcounting. If
everything is put together, \eqref{Eq:max} transforms into
\eqref{Eq:NqComp}.
\end{proof}
\begin{remark}
A combination of \eqref{eq:ZussN} and Proposition~\ref{Prop:NqComp} 
yields an explicit formula for the number
$s_q(n;m_1,m_2)$ of index $n$ subgroups in $\mathfrak H(q)$ of
representation type 
$\left(\begin{smallmatrix}
m_1&n-2m_1\\m_2&n-qm_2\end{smallmatrix}\right)$.
\end{remark}
Although this is not apparent from (\ref{Eq:NqComp}), the number
$M_q(qk;e;k)$ admits a simple product formula representation in
the case when $e=k-1$; that is, when the underlying graph formed by
the red edges and the $q$-gons (collapsed to vertices) is a
tree. Rather embarrassingly, we have not been able to deduce
Formula~(\ref{Eq:GJ}) below directly from the formula of
Proposition~\ref{Prop:NqComp}. 
\begin{proposition}
\label{Prop:GJ}
We have $M_q(q;0,1)=1$, and for $k\geq2$,
\begin{equation}
\label{Eq:GJ}
M_q(qk;k-1,k) = q^k ((q-1)k)((q-1)k-1)\cdots (qk-2k+3),
\end{equation}
where an empty product must be interpreted as $1$.
\end{proposition}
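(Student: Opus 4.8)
The plan is to bypass the explicit formula of Proposition~\ref{Prop:NqComp} entirely and instead return to the underlying combinatorial model of $\mathcal M_q(qk;k-1,k)$. First I would dispose of the trivial case: for $k=1$ the set $\mathcal M_q(q;0,1)$ consists of a single blue $q$-gon with no red edges, so $M_q(q;0,1)=1$. For $k\ge2$, the key structural observation is that, with exactly $k-1$ red edges available and $k$ blue $q$-gons to be joined into a connected figure, no red edge can lie inside a single $q$-gon: contracting each $q$-gon to a point produces a connected (multi)graph on $k$ vertices with at most $k-1$ non-loop edges, so a loop would force the remaining non-loop edges to be too few for connectivity. Hence every red edge joins two \emph{distinct} $q$-gons, and the incidence pattern of the red edges induces a labelled spanning tree $T$ on the vertex set $[k]$ indexing the $q$-gons.

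Next I would count, for a fixed tree shape $T$ with degree sequence $(d_1,\dots,d_k)$, the number of admissible ways to realise it by actual red edges. Since no two red edges may share a vertex, the $d_i$ edges incident to the $i$-th $q$-gon must attach to $d_i$ \emph{distinct} vertices of that $q$-gon; choosing these vertices in the order prescribed by the incident tree-edges gives exactly $q(q-1)\cdots(q-d_i+1)=q!/(q-d_i)!$ possibilities. As the choices in different $q$-gons are independent, the number of elements of $\mathcal M_q(qk;k-1,k)$ inducing the tree $T$ equals $\prod_{i=1}^k q!/(q-d_i)!$, a quantity depending only on the degree sequence of $T$. Summing over all trees and invoking the classical Pr\"ufer-sequence count
\[
\#\{\text{trees }T\text{ on }[k]\text{ with degrees }(d_1,\dots,d_k)\}=\binom{k-2}{d_1-1,\dots,d_k-1},
\]
I obtain
\[
M_q(qk;k-1,k)=\sum_{\substack{d_1,\dots,d_k\ge1\\ d_1+\cdots+d_k=2k-2}}\binom{k-2}{d_1-1,\dots,d_k-1}\prod_{i=1}^k\frac{q!}{(q-d_i)!}.
\]

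The final step is a generating-function evaluation of this sum. Writing $e_i=d_i-1$ (so $\sum e_i=k-2$) and extracting the multinomial as an exponential-generating-function coefficient, the sum becomes $(k-2)!\,[x^{k-2}]\,f(x)^k$, where $f(x)=\sum_{j\ge0}\frac{q!}{(q-j-1)!}\,\frac{x^j}{j!}$. The crucial simplification is that, by the binomial theorem, $f(x)=q\sum_{j\ge0}\binom{q-1}{j}x^j=q(1+x)^{q-1}$, so that $f(x)^k=q^k(1+x)^{k(q-1)}$ and therefore
\[
M_q(qk;k-1,k)=q^k\,(k-2)!\binom{k(q-1)}{k-2}=q^k\,\frac{(k(q-1))!}{(k(q-1)-k+2)!},
\]
which is precisely the claimed product $q^k\,((q-1)k)((q-1)k-1)\cdots(qk-2k+3)$ of $k-2$ consecutive integers (an empty product when $k=2$). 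The main obstacle is conceptual rather than computational: one must resist manipulating the formula of Proposition~\ref{Prop:NqComp} — the source of the authors' stated embarrassment — and instead recognise the tree model together with the falling-factorial generating function $q(1+x)^{q-1}$, after which everything follows from the weighted enumeration of labelled trees by degree sequence.
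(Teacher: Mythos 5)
Your proof is correct, and it completes the enumeration by a genuinely different route than the paper. Both arguments begin with the same structural observation --- with only $k-1$ red edges available, no red edge can join two vertices of the same $q$-gon, so contracting the $q$-gons yields a labelled spanning tree on $[k]$ --- but they diverge at the counting stage. The paper does not count the tree realizations directly: it embeds each configuration canonically in the plane, marks the vertex labelled $1$, reduces the problem to a set $\widetilde{\mathcal M}(q,k)$ of rooted plane structures via $M_q(qk;k-1,k)=q^{k-1}(k-1)!\,\big\vert\widetilde{\mathcal M}(q,k)\big\vert$, bijects $\widetilde{\mathcal M}(q,k)$ with factorizations $(1,2,\dots,qk)=\pi_1\circ\pi_2$ in which $\pi_1$ has cycle type $(q^k)$ and $\pi_2$ has type $(2^{k-1},1^{qk-2k+2})$, and then quotes Goulden and Jackson's result \cite[Theorem~3.2]{GoJaAS} on such connection coefficients. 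You instead finish the computation in place: the number of realizations of a fixed tree with degrees $(d_1,\dots,d_k)$ is $\prod_{i}q!/(q-d_i)!$ (correctly vanishing when some $d_i>q$), the number of labelled trees with that degree sequence is the Pr\"ufer multinomial $\binom{k-2}{d_1-1,\dots,d_k-1}$, and the resulting sum collapses because
\begin{equation*}
\sum_{j\ge0}\frac{q!}{(q-1-j)!}\,\frac{x^j}{j!}=q(1+x)^{q-1},
\end{equation*}
giving $q^k(k-2)!\binom{k(q-1)}{k-2}$, which is exactly the right-hand side of \eqref{Eq:GJ}. What each approach buys: yours is elementary and self-contained, resting only on Cayley--Pr\"ufer enumeration and the binomial theorem, and so removes the dependence on an external theorem about factorizations in the symmetric group; the paper's detour through rooted planar maps situates the formula within the theory of connection coefficients and cacti, at the price of the marked-vertex bookkeeping and the cited result. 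Note that neither proof deduces \eqref{Eq:GJ} from Proposition~\ref{Prop:NqComp}, which is the derivation the authors describe as having eluded them.
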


\begin{proof} 
The proof consists in converting the problem of counting the
elements of\break $\mathcal M_q(qk;k-1,k)$ into a counting problem for
certain planar maps. The latter problem has already been solved by
Goulden and Jackson in \cite{GoJaAS} in connection with the
computation of connection coefficients for the symmetric group.

Indeed, recall that the set $\mathcal M_q(qk;k-1,k)$ consists
only of connected graphs.
Thus, the only way to generate an 
element of $\mathcal M_q(qk;k-1,k)$ out of 
$k$ blue $q$-gons and $k-1$ red edges is by starting from a tree with
vertices labelled $v_1,v_2,\dots,v_k$ (and, hence, $k-1$ edges, which we
assume to be red), blowing up the vertices of
the tree to $q$-gons, and gluing one end of a red edge originally
connecting $v_i$ and $v_j$ to a vertex of the polygon corresponding
to $v_i$, the other to a vertex of the polygon corresponding
to $v_j$, in such a way that no two red edges share a vertex.
Finally, we label the vertices of the polygon corresponding to $v_i$
by $qi-q+1,\dots,qi-1,qi$ in circular order.

Such an object can be embedded canonically in the plane 
without crossings of edges by
requiring that all polygons are embedded with clockwise circular
labelling. Deleting all labels 
and marking the vertex originally labelled by $1$,
we obtain a certain set $\widetilde{\mathcal M}(q,k)$ of graphs in
which one vertex is marked. 
Figure~\ref{fig:1}.a shows such a graph in $\widetilde{\mathcal
M}(3,5)$; there, the marked vertex is indicated by a black square, 
red edges are indicated as undirected edges,
while the blue edges are the directed edges.
We observe that
$$M_q(qk;k-1,k)=q^{k-1}(k-1)!\,\big\vert\widetilde{\mathcal M}(q,k)\big\vert,$$
since, starting with an object from $\widetilde{\mathcal M}(q,k)$, we have
$(k-1)!$ possibilities to decide from which set of the form
$\{qi-q+1,\dots,qi-1,qi\}$ to take the labels for a given unmarked
$q$-gon, and subsequently, for each of the $k-1$ unmarked $q$-gons, 
we have $q$ possibilities where to start the clockwise labelling.
The problem of counting the elements of $\mathcal M_q(qk;k-1,k)$
has thus been reduced to the problem of finding the cardinality
of the set $\widetilde{\mathcal M}(q,k)$.

\begin{figure}
$$
\Einheit=.35cm
\setlength{\unitlength}{.35cm}
\raise10 \Einheit\hbox to0pt{\hskip0 \Einheit
           \raise-4pt\hbox to0pt{\hss\Large$\blacksquare$\hss}\hss}
\DickPunkt(4,0)
\DickPunkt(2,6)
\DickPunkt(8,4)
\DickPunkt(6,10)
\DickPunkt(2,14)
\DickPunkt(6,16)
\DickPunkt(12,14)
\DickPunkt(10,18)
\DickPunkt(12,0)
\DickPunkt(18,4)
\DickPunkt(14,6)
\DickPunkt(18,10)
\DickPunkt(14,12)
\DickPunkt(16,16)
\Pfad(18,4),222222\endPfad
\unskip\leavevmode
\raise0 \Einheit\hbox to0pt{\hskip4 \Einheit \thicklines
          \vector(-1,3){1}\hss}
\raise3 \Einheit\hbox to0pt{\hskip3 \Einheit \thicklines
          \line(-1,3){1}\hss}
\raise6 \Einheit\hbox to0pt{\hskip2 \Einheit \thicklines
          \vector(3,-1){3}\hss}
\raise5 \Einheit\hbox to0pt{\hskip5 \Einheit \thicklines
          \line(3,-1){3}\hss}
\raise4 \Einheit\hbox to0pt{\hskip8 \Einheit \thicklines
          \vector(-1,-1){2}\hss}
\raise2 \Einheit\hbox to0pt{\hskip6 \Einheit \thicklines
          \line(-1,-1){2}\hss}
\raise10 \Einheit\hbox to0pt{\hskip0 \Einheit \thicklines
          \vector(1,2){1}\hss}
\raise12 \Einheit\hbox to0pt{\hskip1 \Einheit \thicklines
          \line(1,2){1}\hss}
\raise14 \Einheit\hbox to0pt{\hskip2 \Einheit \thicklines
          \vector(1,-1){2}\hss}
\raise12 \Einheit\hbox to0pt{\hskip4 \Einheit \thicklines
          \line(1,-1){2}\hss}
\raise10 \Einheit\hbox to0pt{\hskip6 \Einheit \thicklines
          \vector(-1,0){3}\hss}
\raise10 \Einheit\hbox to0pt{\hskip3 \Einheit \thicklines
          \line(-1,0){3}\hss}
\raise16 \Einheit\hbox to0pt{\hskip6 \Einheit \thicklines
          \vector(2,1){2}\hss}
\raise17 \Einheit\hbox to0pt{\hskip8 \Einheit \thicklines
          \line(2,1){2}\hss}
\raise18 \Einheit\hbox to0pt{\hskip10 \Einheit \thicklines
          \vector(1,-2){1}\hss}
\raise16 \Einheit\hbox to0pt{\hskip11 \Einheit \thicklines
          \line(1,-2){1}\hss}
\raise14 \Einheit\hbox to0pt{\hskip12 \Einheit \thicklines
          \vector(-3,1){3}\hss}
\raise15 \Einheit\hbox to0pt{\hskip9 \Einheit \thicklines
          \line(-3,1){3}\hss}
\raise12 \Einheit\hbox to0pt{\hskip14 \Einheit \thicklines
          \vector(1,2){1}\hss}
\raise14 \Einheit\hbox to0pt{\hskip15 \Einheit \thicklines
          \line(1,2){1}\hss}
\raise16 \Einheit\hbox to0pt{\hskip16 \Einheit \thicklines
          \vector(1,-3){1}\hss}
\raise13 \Einheit\hbox to0pt{\hskip17 \Einheit \thicklines
          \line(1,-3){1}\hss}
\raise10 \Einheit\hbox to0pt{\hskip18 \Einheit \thicklines
          \vector(-2,1){2}\hss}
\raise11 \Einheit\hbox to0pt{\hskip16 \Einheit \thicklines
          \line(-2,1){2}\hss}
\raise0 \Einheit\hbox to0pt{\hskip12 \Einheit \thicklines
          \vector(1,3){1}\hss}
\raise3 \Einheit\hbox to0pt{\hskip13 \Einheit \thicklines
          \line(1,3){1}\hss}
\raise6 \Einheit\hbox to0pt{\hskip14 \Einheit \thicklines
          \vector(2,-1){2}\hss}
\raise5 \Einheit\hbox to0pt{\hskip16 \Einheit \thicklines
          \line(2,-1){2}\hss}
\raise4 \Einheit\hbox to0pt{\hskip18 \Einheit \thicklines
          \vector(-3,-2){3}\hss}
\raise2 \Einheit\hbox to0pt{\hskip15 \Einheit \thicklines
          \line(-3,-2){3}\hss}
\raise6 \Einheit\hbox to0pt{\hskip2 \Einheit \thicklines
          \line(-1,2){2}\hss}
\raise10 \Einheit\hbox to0pt{\hskip6 \Einheit \thicklines
          \line(2,-1){8}\hss}
\raise14 \Einheit\hbox to0pt{\hskip2 \Einheit \thicklines
          \line(2,1){4}\hss}
\hbox{\hskip8cm}%
\DickPunkt(4,0)
\DickPunkt(2,6)
\DickPunkt(8,4)
\DickPunkt(0,10)
\DickPunkt(6,10)
\DickPunkt(2,14)
\DickPunkt(6,16)
\DickPunkt(12,14)
\DickPunkt(10,18)
\DickPunkt(12,0)
\DickPunkt(18,4)
\DickPunkt(14,6)
\DickPunkt(18,10)
\DickPunkt(14,12)
\DickPunkt(16,16)
\Pfad(18,4),222222\endPfad
\unskip\leavevmode
\raise0 \Einheit\hbox to0pt{\hskip4 \Einheit \thicklines
          \vector(-1,3){1}\hss}
\raise3 \Einheit\hbox to0pt{\hskip3 \Einheit \thicklines
          \line(-1,3){1}\hss}
\raise6 \Einheit\hbox to0pt{\hskip2 \Einheit \thicklines
          \vector(3,-1){3}\hss}
\raise5 \Einheit\hbox to0pt{\hskip5 \Einheit \thicklines
          \line(3,-1){3}\hss}
\raise4 \Einheit\hbox to0pt{\hskip8 \Einheit \thicklines
          \vector(-1,-1){2}\hss}
\raise2 \Einheit\hbox to0pt{\hskip6 \Einheit \thicklines
          \line(-1,-1){2}\hss}
\raise10 \Einheit\hbox to0pt{\hskip0 \Einheit \thicklines
          \vector(1,2){1}\hss}
\raise12 \Einheit\hbox to0pt{\hskip1 \Einheit \thicklines
          \line(1,2){1}\hss}
\raise14 \Einheit\hbox to0pt{\hskip2 \Einheit \thicklines
          \vector(1,-1){2}\hss}
\raise12 \Einheit\hbox to0pt{\hskip4 \Einheit \thicklines
          \line(1,-1){2}\hss}
\raise10 \Einheit\hbox to0pt{\hskip6 \Einheit \thicklines
          \vector(-1,0){3}\hss}
\raise10 \Einheit\hbox to0pt{\hskip3 \Einheit \thicklines
          \line(-1,0){3}\hss}
\raise16 \Einheit\hbox to0pt{\hskip6 \Einheit \thicklines
          \vector(2,1){2}\hss}
\raise17 \Einheit\hbox to0pt{\hskip8 \Einheit \thicklines
          \line(2,1){2}\hss}
\raise18 \Einheit\hbox to0pt{\hskip10 \Einheit \thicklines
          \vector(1,-2){1}\hss}
\raise16 \Einheit\hbox to0pt{\hskip11 \Einheit \thicklines
          \line(1,-2){1}\hss}
\raise14 \Einheit\hbox to0pt{\hskip12 \Einheit \thicklines
          \vector(-3,1){3}\hss}
\raise15 \Einheit\hbox to0pt{\hskip9 \Einheit \thicklines
          \line(-3,1){3}\hss}
\raise12 \Einheit\hbox to0pt{\hskip14 \Einheit \thicklines
          \vector(1,2){1}\hss}
\raise14 \Einheit\hbox to0pt{\hskip15 \Einheit \thicklines
          \line(1,2){1}\hss}
\raise16 \Einheit\hbox to0pt{\hskip16 \Einheit \thicklines
          \vector(1,-3){1}\hss}
\raise13 \Einheit\hbox to0pt{\hskip17 \Einheit \thicklines
          \line(1,-3){1}\hss}
\raise10 \Einheit\hbox to0pt{\hskip18 \Einheit \thicklines
          \vector(-2,1){2}\hss}
\raise11 \Einheit\hbox to0pt{\hskip16 \Einheit \thicklines
          \line(-2,1){2}\hss}
\raise0 \Einheit\hbox to0pt{\hskip12 \Einheit \thicklines
          \vector(1,3){1}\hss}
\raise3 \Einheit\hbox to0pt{\hskip13 \Einheit \thicklines
          \line(1,3){1}\hss}
\raise6 \Einheit\hbox to0pt{\hskip14 \Einheit \thicklines
          \vector(2,-1){2}\hss}
\raise5 \Einheit\hbox to0pt{\hskip16 \Einheit \thicklines
          \line(2,-1){2}\hss}
\raise4 \Einheit\hbox to0pt{\hskip18 \Einheit \thicklines
          \vector(-3,-2){3}\hss}
\raise2 \Einheit\hbox to0pt{\hskip15 \Einheit \thicklines
          \line(-3,-2){3}\hss}
\raise6 \Einheit\hbox to0pt{\hskip2 \Einheit \thicklines
          \line(-1,2){2}\hss}
\raise10 \Einheit\hbox to0pt{\hskip6 \Einheit \thicklines
          \line(2,-1){8}\hss}
\raise14 \Einheit\hbox to0pt{\hskip2 \Einheit \thicklines
          \line(2,1){4}\hss}
\Label\u{3}(4,0)
\Label\lu{\hbox{4\hskip5pt}}(2,6)
\Label\ru{\hbox{\hskip5pt 2}}(8,4)
\Label\l{1}(0,10)
\Label\u{9}(6,10)
\Label\lo{\hbox{5\hskip5pt}}(2,14)
\Label\u{8}(6,16)
\Label\ro{\hbox{\hskip7pt 7}}(12,14)
\Label\o{6}(10,18)
\Label\u{14}(12,0)
\Label\u{10}(18,4)
\Label\ro{\hbox{\hskip5pt 15}}(14,6)
\Label\r{13}(18,10)
\Label\lu{\hbox{11\hskip5pt}}(14,12)
\Label\o{12}(16,16)
\hskip7cm
$$
\vskip10pt
\centerline{\Small
a. An element of $\widetilde{\mathcal M}(3,5)$\hskip3cm
b. The labelling of vertices}
\caption{}
\label{fig:1}
\end{figure}

To make the link with \cite{GoJaAS}, given an element of $\widetilde{\mathcal
M}(q,k)$, we translate it into a factorization 
\begin{equation} \label{eq:al1al2} 
(1,2,\dots,qk)=\pi_1\circ \pi_2
\end{equation}
of the ``long" cycle $(1,2,\dots,qk)$ into the product of two
permutations in $S_{qk}$, $\pi_1$ consisting of $k$ cycles of length
$q$, and $\pi_2$ consisting of $k-1$ cycles of length $2$ and fixed
points otherwise. To explain this translation, consider 
Figure~\ref{fig:1}, which illustrates an example for $q=3$ and $k=5$. 
Given an element of $\widetilde{\mathcal M}(q,k)$, 
we determine labels for all vertices in the
following way: the marked vertex is labelled $1$. Now we suppose that we
already have labelled $i$ vertices by $1,2,\dots,i$. Placing ourselves
in the vertex labelled $i$, $v_i$ say, 
there are two possibilities: either this
vertex is incident to a red edge or not. In the first case, we move
from $v_i$ along the red edge, arriving in the vertex $u$, say, and
then continue along the blue edge emanating from $u$. The vertex which
we reach at the other end of this blue edge is labelled $i+1$. In the
second case, we simply move along the blue edge emanating from $v_i$,
and we label the vertex which we reach at the other end of this blue
edge by $i+1$. Figure~\ref{fig:1}.b shows the
resulting labelling in our example. From the labelling we can read off
a factorization \eqref{eq:al1al2} by interpreting a $q$-gon
with vertices labelled $j_1,j_2,\dots,j_q$ in clockwise order as the
cycle $(j_1,j_2,\dots,j_q)$ and letting $\pi_1$ be the product of all
these cycles, and by interpreting a red edge with end vertices
$j_1,j_2$ as the transposition $(j_1,j_2)$ and letting $\pi_2$ be the product
of all these transpositions. In this way, our example in
Figure~\ref{fig:1} corresponds to the factorization
$$(1,2,\dots,15)=\pi_1\circ \pi_2,$$
where 
$$\pi_1=(1,5,9)(2,3,4)(6,7,8)(10,14,15)(11,12,13)$$
and
$$\pi_2=(1,4)(5,8)(9,15)(10,13).$$
It is not difficult to see that this translation defines a bijection
between elements of $\widetilde{\mathcal M}(q,k)$ and factorizations
\eqref{eq:al1al2} where the disjoint cycle factorization of 
$\pi_1$ consists of $k$ cycles of length $q$, and 
where the disjoint cycle factorization of 
$\pi_2$ consists of $k-1$ cycles of length $2$ and fixed points otherwise.
The solution of the enumeration 
problem for these objects is then found in \cite[Theorem~3.2]{GoJaAS}
by specializing $n=qk$, $m=2$, $\al_1=(q^k)$, and
$\al_2=(2^{k-1},1^{qk-2k+2})$. 
\end{proof}

\subsection{The parity of the numbers $M_q(n)$}
\label{Subsec:NqnParity}
Let $I_n$ denote the number of solutions of the equation $x^2=1$ in
the symmetric group $S_n$. It is well known that 
\begin{equation} \label{Eq:In}
I_n=\sum _{\al\geq0}\frac {n!} {2^\al \al!\,(n-2\al)!},
\end{equation}
if we set $\frac{1}{n!}=0$ for integers $n<0$ in accordance with the
behaviour of the gamma function; cf. \cite[Equation~(4)]{CHM}. 
The exact value of the $2$-adic valuation of $I_n$ has been determined by
Ochiai \cite[Sec.~3.2]{Ochiai}. The result is that
$$
v_2(I_n)=\begin{cases} 
n/4,&\text {if }n\equiv 0\ (\mathrm{mod}\ 4),\\[2mm]
(n-1)/4,&\text {if }n\equiv 1\ (\mathrm{mod}\ 4),\\[2mm]
(n+2)/4,&\text {if }n\equiv 2\ (\mathrm{mod}\ 4),\\[2mm]
(n+5)/4,&\text {if }n\equiv 3\ (\mathrm{mod}\ 4),
\end{cases}
$$
where, as usual, $v_2(\al)$ stands for the $2$-adic valuation of
$\al$. 
For our purposes, the weaker estimate
\begin{equation} \label{Eq:2In}
v_2(I_n)\ge \begin{cases} 
{n}/ {4},&\text {if $n\not\equiv 1\ (\mathrm{mod}\ 4)$},\\[2mm] 
({n-1})/ {4},&\text {if $n\equiv 1\ (\mathrm{mod}\ 4)$}.
\end{cases}
\end{equation} 
suffices, which already follows from \cite[Theorem~10]{CHM}.

The following auxiliary result (Lemma~\ref{lem:multi}), 
whose proof will be given in Section~\ref{Subsec:Lem12} in the appendix, 
is needed to bound the $2$-adic valuation 
of the numbers $M_q(qk)$ in the case when $q\geq5$; see
Proposition~\ref{Prop:Nq2Part}.

\begin{lemma} \label{lem:multi}
If $\rh_1\equiv \rh_2\equiv\dots\equiv \rh_\al\equiv1\ (\mathrm{mod}\ 4),$ then
\begin{equation} \label{eq:multi1}
v_2\left(\binom{\rho_1+\rh_2+\cdots+\rho_\al}
{\rho_1,\rho_2,\ldots,\rho_\al}\right)\ge
v_2(\al!).
\end{equation}
If $\rh_1\equiv \rh_2\equiv\dots\equiv \rh_\al\equiv3\ (\mathrm{mod}\ 4),$ then
\begin{equation} \label{eq:multi2}
v_2\left(\binom{\rho_1+\rh_2+\cdots+
\rho_\al}{\rho_1,\rh_2,\ldots,\rho_\al}\right)\ge
v_2\big((3\al)!\big)-\al.
\end{equation}
\end{lemma}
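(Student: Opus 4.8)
The plan is to bypass any direct manipulation of binomial coefficients and instead reduce everything to the behaviour of binary digit sums. The single tool I would set up first is Legendre's formula in base~$2$, namely $v_2(n!) = n - \mathfrak{s}_2(n)$. Writing $N = \rho_1 + \cdots + \rho_\alpha$ and expanding the multinomial coefficient as $N!/(\rho_1!\cdots\rho_\alpha!)$, this yields at once the master identity
\[
v_2\left(\binom{\rho_1+\cdots+\rho_\alpha}{\rho_1,\ldots,\rho_\alpha}\right)
= \sum_{i=1}^\alpha \mathfrak{s}_2(\rho_i) - \mathfrak{s}_2(N),
\]
because the linear contribution $N - \sum_i\rho_i$ cancels. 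After this, the only external facts I need are subadditivity of the binary digit sum, $\mathfrak{s}_2(a+b)\le \mathfrak{s}_2(a)+\mathfrak{s}_2(b)$ (equivalently, that carries in binary addition are non-negative), and the shift-invariance $\mathfrak{s}_2(4c)=\mathfrak{s}_2(c)$.

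For the first assertion I would use that $\rho_i\equiv 1\ (\mathrm{mod}\ 4)$ fixes the two lowest binary digits of $\rho_i$ to be $01$, so that $\rho_i = 1+4c_i$ with $c_i\ge0$ and $\mathfrak{s}_2(\rho_i)=1+\mathfrak{s}_2(c_i)$. Setting $C=c_1+\cdots+c_\alpha$ gives $N=\alpha+4C$, and the master identity becomes $\alpha + \sum_i\mathfrak{s}_2(c_i) - \mathfrak{s}_2(\alpha+4C)$. Subadditivity together with shift-invariance bounds $\mathfrak{s}_2(\alpha+4C)\le \mathfrak{s}_2(\alpha)+\mathfrak{s}_2(C)\le \mathfrak{s}_2(\alpha)+\sum_i\mathfrak{s}_2(c_i)$, whence the expression is at least $\alpha-\mathfrak{s}_2(\alpha)=v_2(\alpha!)$, which is exactly \eqref{eq:multi1}.

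The second assertion runs along identical lines, the only change being the fixed low-order part. Now $\rho_i\equiv 3\ (\mathrm{mod}\ 4)$ forces $\rho_i=3+4c_i$ with $\mathfrak{s}_2(\rho_i)=2+\mathfrak{s}_2(c_i)$, so $N=3\alpha+4C$ and the master identity reads $2\alpha + \sum_i\mathfrak{s}_2(c_i) - \mathfrak{s}_2(3\alpha+4C)$. The same subadditivity estimate gives $\mathfrak{s}_2(3\alpha+4C)\le \mathfrak{s}_2(3\alpha)+\sum_i\mathfrak{s}_2(c_i)$, so the quantity is at least $2\alpha-\mathfrak{s}_2(3\alpha)=(3\alpha-\mathfrak{s}_2(3\alpha))-\alpha=v_2\big((3\alpha)!\big)-\alpha$, which is \eqref{eq:multi2}.

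There is essentially no serious obstacle once the master identity is in hand; both inequalities collapse to a one-line application of digit-sum subadditivity. The only point requiring genuine care is the reduction step itself: one must verify that the congruence hypothesis splits off the low-order contribution cleanly, so that $\mathfrak{s}_2(\rho_i)$ decomposes \emph{additively} as a fixed constant plus $\mathfrak{s}_2(c_i)$ with no interaction between the forced digits and the higher digits scaled by~$4$, and that the resulting lower bounds match the target valuations $v_2(\alpha!)$ and $v_2((3\alpha)!)-\alpha$ precisely through Legendre's formula. The remainder is pure bookkeeping.
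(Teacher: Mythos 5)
Your proof is correct, but it takes a genuinely different route from the paper's. The paper invokes Kummer's theorem in the form: the $2$-adic valuation of the multinomial coefficient equals the number of carries occurring when $\rho_1,\dots,\rho_\alpha$ are added in binary; it then bounds this carry count from below by a cascading floor estimate, using only that the hypothesis $\rho_i\equiv1\pmod 4$ forces all $\alpha$ lowest-order binary digits to equal $1$, so that at least $\sum_{\ell\ge1}\lfloor\alpha/2^\ell\rfloor=v_2(\alpha!)$ carries must occur; the case $\rho_i\equiv3\pmod4$ is only asserted to be ``similar''. You instead start from Legendre's formula, which gives the identity $v_2=\sum_i\mathfrak{s}_2(\rho_i)-\mathfrak{s}_2(N)$, split each $\rho_i$ into its forced low-order part plus $4c_i$, and finish with subadditivity and shift-invariance of $\mathfrak{s}_2$. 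The two arguments rest on equivalent underlying facts --- your master identity is Kummer's theorem restated, and subadditivity of the digit sum is precisely non-negativity of carries --- but the mechanics differ: the paper tracks carry propagation digit by digit, while yours is a static digit-sum computation. What your route buys is uniformity and completeness: both congruence classes are dispatched by the same two-line estimate, and in particular the $\rho_i\equiv3\pmod4$ case, where the paper's cascading-floor argument requires genuine additional care (there both the $0$-th and $1$-st digit counts equal $\alpha$, and the target $v_2\big((3\alpha)!\big)-\alpha$ is less transparent in carry form), falls out of your computation with no extra effort, since $2\alpha-\mathfrak{s}_2(3\alpha)=v_2\big((3\alpha)!\big)-\alpha$ is immediate from Legendre.
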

\begin{proposition} \label{Prop:Nq2Part}
For a prime number $q\geq5$ and an integer $k\geq1,$ the number
$M_q(qk)$ of subgroups in $\mathfrak H(q)$ 
of index $qk$ and with $\mu(\De)=0$ satisfies
\begin{equation} \label{Eq:v2Nqk}
v_2(M_q(qk))\ge
\frac {qk-1} {4}-v_2\big((k-1)!\big)-\fl{\log_2k}.
\end{equation}
In particular, $M_q(qk)$ is even for $q\geq5$ and $k\geq1$.
\end{proposition}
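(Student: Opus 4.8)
The plan is to collapse the sum over $e$ in Corollary~\ref{Cor:SubgpMu=0Enum}(ii) against the explicit formula of Proposition~\ref{Prop:NqComp}, and then to carry out a term-by-term $2$-adic estimate. Summing the identity~\eqref{Eq:NqComp} over all $e\ge0$ is legitimate, since it is a mere rearrangement of the exact M\"obius identity~\eqref{Eq:max} and so holds for every $e$ (contributions with $e>qk/2$ vanish through the convention $1/j!=0$ for $j<0$, and those with $e<k-1$ vanish because $M_{=\hat1}(k;e)=0$ there). The inner geometric sum over each $\alpha_i$ then turns into an involution number via~\eqref{Eq:In}, namely $\sum_{\alpha_i\ge0}\frac{(q\rho_i)!}{2^{\alpha_i}\alpha_i!\,(q\rho_i-2\alpha_i)!}=I_{q\rho_i}$. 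Combined with~\eqref{Eq:SubgpMu=0Enum}, this yields the working formula
\[
M_q(qk)=\frac{1}{q^{k-1}(k-1)!}\sum_{\gamma=1}^{k}\frac{(-1)^{\gamma-1}}{\gamma}\underset{\rho_1+\cdots+\rho_\gamma=k}{\sum_{\rho_1,\dots,\rho_\gamma\ge1}}\binom{k}{\rho_1,\dots,\rho_\gamma}\prod_{i=1}^{\gamma}I_{q\rho_i}.
\]

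Since $q$ is odd, the factor $q^{k-1}$ contributes nothing $2$-adically, so $v_2(M_q(qk))$ equals $-v_2((k-1)!)$ plus $v_2(S)$, where $S$ is the double sum above. By the ultrametric inequality, $v_2(S)$ is at least the minimum over $\gamma$ and over compositions $(\rho_1,\dots,\rho_\gamma)$ of $-v_2(\gamma)+v_2\big(\binom{k}{\rho_1,\dots,\rho_\gamma}\big)+\sum_{i=1}^{\gamma}v_2(I_{q\rho_i})$. As $1\le\gamma\le k$ forces $v_2(\gamma)\le\lfloor\log_2k\rfloor$, it suffices to establish, for every composition,
\[
v_2\Big(\binom{k}{\rho_1,\dots,\rho_\gamma}\Big)+\sum_{i=1}^{\gamma}v_2(I_{q\rho_i})\ \ge\ \frac{qk-1}{4},
\]
which then delivers exactly the bound~\eqref{Eq:v2Nqk}.

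To prove this I would call an index $i$ \emph{deficient} if $q\rho_i\equiv1\pmod4$ and let $s$ count them. The estimate~\eqref{Eq:2In} gives $v_2(I_{q\rho_i})\ge q\rho_i/4$ at non-deficient indices and $v_2(I_{q\rho_i})\ge(q\rho_i-1)/4$ at deficient ones; summing and using $\sum_i q\rho_i=qk$ yields $\sum_i v_2(I_{q\rho_i})\ge(qk-s)/4$, so it remains to show $v_2\big(\binom{k}{\rho_1,\dots,\rho_\gamma}\big)\ge(s-1)/4$. The key observation is that the deficient indices all have $\rho_i$ in a single residue class modulo $4$: if $q\equiv1$ then $q\rho_i\equiv1$ forces $\rho_i\equiv1\pmod4$, while if $q\equiv3$ it forces $\rho_i\equiv3\pmod4$. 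Factoring the multinomial coefficient as $\binom{k}{\rho_1,\dots,\rho_\gamma}=\binom{k}{\Sigma}\binom{\Sigma}{\text{deficient }\rho_i}\binom{k-\Sigma}{\text{remaining }\rho_i}$, where $\Sigma$ is the sum of the deficient parts, and discarding the two integer cofactors, reduces matters to the coefficient built from the $s$ deficient parts alone. Lemma~\ref{lem:multi} then applies with $\alpha=s$: for $q\equiv1$ it gives a valuation at least $v_2(s!)$, and for $q\equiv3$ at least $v_2((3s)!)-s$; a short check of binary digit sums shows both quantities are $\ge(s-1)/4$, completing the estimate.

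The main obstacle is precisely this matching. The formula of Proposition~\ref{Prop:NqComp} only guarantees $v_2(I_{q\rho_i})\ge(q\rho_i-1)/4$ at deficient indices, so one loses $\tfrac14$ per deficient index, and the sole available source to recover the loss is the arithmetic of the multinomial coefficient. Everything hinges on recognizing that deficiency pins $\rho_i$ to one residue class mod $4$, which is exactly the hypothesis under which Lemma~\ref{lem:multi} furnishes the compensating divisibility. Finally, the parenthetical ``in particular'' assertion follows because a direct, if slightly fiddly, estimate shows the right-hand side of~\eqref{Eq:v2Nqk} is $\ge1$ for all $q\ge5$ and $k\ge1$, the tightest instances occurring at $k=2^{j}+1$.
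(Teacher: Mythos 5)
Your proposal is correct and is essentially the paper's own proof: the paper likewise collapses the sum over $e$ into the involution numbers $I_{q\rho_i}$, estimates term by term, isolates the indices with $q\rho_i\equiv1\ (\mathrm{mod}\ 4)$ (all of whose $\rho_i$ then lie in one residue class mod $4$), splits off the sub-multinomial coefficient on those parts, and applies Lemma~\ref{lem:multi} together with \eqref{Eq:2In} to recover the lost quarter per such index, finishing with the same positivity check of the right-hand side of \eqref{Eq:v2Nqk}. The only cosmetic difference is bookkeeping: you demand the bound $(s-1)/4$ for the deficient-part multinomial coefficient, while the paper derives the slightly stronger $\max\{s-1,0\}/2$ and then minimizes over $s$ — both follow from Lemma~\ref{lem:multi} by the same digit-sum estimates.
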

\begin{proof}
By Part (ii) of Corollary~\ref{Cor:SubgpMu=0Enum},
Proposition~\ref{Prop:NqComp}, and 
\eqref{Eq:In}, we have 
\begin{align}
\notag
M_q(qk)&=
\frac {1} {q^{k-1} (k-1)!}\sum_{0\leq e\leq\frac{qk}{2}}
\,\sum_{\gamma=1}^{k}\,\frac{(-1)^{\gamma-1}}{\gamma}\\
\notag
&\kern4cm
\cdot
\underset{\alpha_1+\cdots+\alpha_\gamma=e}
{\sum_{\alpha_1,\ldots,\alpha_\gamma\geq0}}\,
\,\underset{\rho_1+\cdots+\rho_\gamma=k}
{\sum_{\rho_1,\dots,\rho_\gamma\geq1}}
\binom{k}{\rho_1,\ldots,\rho_\gamma}
\prod_{i=1}^\gamma
\frac {(q\rho_i)!}{2^{\al_i}\alpha_i!\,(q\rh_i-2\al_i)!}\\
&=
\sum_{\gamma=1}^{k}\,
\underset{\rho_1+\cdots+\rho_\gamma=k}
{\sum_{\rho_1,\dots,\rho_\gamma\geq1}}
\,\frac {1} {q^{k-1} (k-1)!}\,
\frac{(-1)^{\gamma-1}}{\gamma}
\,\binom{k}{\rho_1,\ldots,\rho_\gamma}
\prod_{i=1}^\gamma
I_{q\rh_i}.
\label{Eq:v2A}
\end{align}
For fixed $\ga$ and $\rh_1,\rh_2,\dots,\rh_\ga$, we bound the
$2$-adic valuation of the
corresponding summand in the sum in the last line, 
$S(\ga;\rh_1,\dots,\rh_\ga)$ say. Namely, without loss
of generality, let $\rh_1,\rh_2,\dots,\rh_\al$ be the $\rh_j$'s for which
$q\rh_j\equiv1\ (\mathrm{mod}\ 4)$. Then we have
$$
\binom{k}{\rho_1,\ldots,\rho_\gamma}
=
\binom{\rh_1+\rh_2+\dots+\rh_\al}{\rho_1,\ldots,\rho_\al}
\binom{k}{\rho_1+\dots+\rh_\al,\rh_{\al+1},\ldots,\rho_\gamma},
$$
and hence, by a combination
of \eqref{Eq:2In} and Lemma~\ref{lem:multi},
\begin{align*}
v_2\big(S(\ga;\rh_1,&\dots,\rh_\ga)\big)\\
&\ge -v_2\big((k-1)!\big)-\fl{\log_2\ga}+
v_2\left(
\binom{\rh_1+\rh_2+\dots+\rh_\al}{\rho_1,\ldots,\rho_\al}
\right)
+\sum_{i=1}^\gamma v_2(I_{q\rh_i})\\
&\ge -v_2\big((k-1)!\big)-\fl{\log_2k}+\frac {\max\{\al-1,0\}} {2}
+\frac {q(\rh_1+\rh_2+\dots+\rh_\ga)-\al} {4}\\
&\ge -v_2\big((k-1)!\big)-\fl{\log_2k}
+\frac{\max\{\al-2,-\al\}} {4}+\frac {qk} 4.
\end{align*}
The $2$-adic valuation of $M_q(qk)$ is at least the minimum of the 
expression displayed in the last line taken over all possible choices of $\al$. This minimum is
exactly the expression on the right-hand side of \eqref{Eq:v2Nqk}.

To see that the right-hand side of \eqref{Eq:v2Nqk} is always
positive for $q\geq5$, one observes that\break $v_2\big((k-1)!\big)\le k-2$
as long as $k\ge2$, and thus
$$v_2(M_q(qk))\ge 
\frac {(q-4)k+7} {4}-\fl{\log_2k}\ge
\frac {k+7} {4}-\fl{\log_2k}>0,
$$
provided that $k\ge2$. For $k=1$, it can be verified directly that
the right-hand side of \eqref{Eq:v2Nqk} is positive.
\end{proof}
Proposition~\ref{Prop:Nq2Part} leaves open the case when $q=3$, which
is settled in Proposition~\ref{Prop:N3Parity} below, making use of
results in \cite{Stothers}. The following auxiliary result 
(Lemma~\ref{Lem:BinomParity}) will be
used in the proof of Proposition~\ref{Prop:N3Parity}; it also bears on
the parity of the numbers $M_q(qk;k-1,k)/(k-1)!$ in the case
when $q$ is a Fermat prime; see Corollary~\ref{Cor:NqTreeCaseParity}. 
The (straightforward but somewhat lengthy) proof of 
Lemma~\ref{Lem:BinomParity} is recorded in Section~\ref{Subsec:Lem14}
in the appendix.

\begin{lemma}
\label{Lem:BinomParity}
Let $\lambda,k\geq1$ be integers. Then
\begin{multline}
\label{Eq:BinomParityCond}
\binom{2^\lambda k+1}{k-1} \equiv 1\ (\mathrm{mod}\ 2)\
\text { if, and only if, }\\
k=\frac{2^{\lambda\sigma}-1}{2^\lambda-1}\,\mbox{ or
}\,k=\frac{2^{\lambda\sigma+1}-2}{2^\lambda-1}\,\mbox{ for some }\,
\sigma\geq1. 
\end{multline}
\end{lemma}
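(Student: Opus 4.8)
The plan is to read off the parity of $\binom{2^\lambda k+1}{k-1}$ from Lucas' theorem, according to which $\binom{m}{j}$ is odd if, and only if, every binary digit of $j$ is at most the corresponding binary digit of $m$ (equivalently, the $1$-bits of $j$ form a subset of the $1$-bits of $m$). First I would pass to the base-$2^\lambda$ expansion $k=\sum_{i\ge0}c_i\,2^{\lambda i}$ with $0\le c_i<2^\lambda$. Since each base-$2^\lambda$ digit occupies a block of $\lambda$ consecutive binary positions and distinct blocks do not overlap, Lucas' condition factorises over these blocks: $\binom{m}{j}$ is odd precisely when, for every $i$, the $i$-th base-$2^\lambda$ digit of $j$ is a binary submask of the $i$-th base-$2^\lambda$ digit of $m$.

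The key computation is that $m=2^\lambda k+1$ has base-$2^\lambda$ digit string $(1,c_0,c_1,c_2,\dots)$: the unit digit is $1$, and the higher digits are those of $k$ shifted up by one place, there being no carry since the bottom $\lambda$ binary digits of $2^\lambda k$ vanish. Writing $d_0,d_1,\dots$ for the base-$2^\lambda$ digits of $k-1$, the oddness criterion becomes ``$d_0$ a submask of $1$'' (that is, $d_0\in\{0,1\}$) together with ``$d_{j}$ a submask of $c_{j-1}$'' for all $j\ge1$. The remaining ingredient is to express the $d_j$ through the $c_i$, which needs a short borrowing analysis: if $c_0\ge1$ then $(d_0,d_1,\dots)=(c_0-1,c_1,c_2,\dots)$, whereas if $c_0=0$ and $t\ge1$ denotes the index of the lowest nonzero digit of $k$, then the bottom $t$ digits of $k-1$ all equal $2^\lambda-1$, followed by $c_t-1$ and then the unchanged digits $c_{t+1},c_{t+2},\dots$.

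Carrying out the two cases, the criterion collapses to a nested submask chain. When $c_0\ge1$, the constraint $d_0\in\{0,1\}$ forces $c_0\in\{1,2\}$, and the surviving conditions read $c_j$ a submask of $c_{j-1}$ for all $j\ge1$, i.e.\ $c_0\supseteq c_1\supseteq c_2\supseteq\cdots$ is a nested (hence eventually zero) chain; starting from $c_0=1$ this yields the digit string $1^\sigma0^\infty$ and $k=(2^{\lambda\sigma}-1)/(2^\lambda-1)$, while starting from $c_0=2$ (legitimate only for $\lambda\ge2$) it yields $2^\sigma0^\infty$ and $k=2(2^{\lambda\sigma}-1)/(2^\lambda-1)=(2^{\lambda\sigma+1}-2)/(2^\lambda-1)$. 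The borrowing case $c_0=0$ is where the real care is needed: for $\lambda\ge2$ the bottom digit $d_0=2^\lambda-1$ is \emph{not} a submask of $1$, so this case is vacuous and both families have already appeared; for $\lambda=1$, however, $2^\lambda-1=1$ \emph{is} a submask of $1$, and chasing the chain conditions forces $t=1$ with digit string $0\,1^\sigma0^\infty$, which reproduces exactly the second family $k=2^{\sigma+1}-2=(2^{\lambda\sigma+1}-2)/(2^\lambda-1)$. Assembling the cases gives the stated equivalence; I expect the main obstacle to be bookkeeping the borrowing case and the swapped roles of the two families for $\lambda=1$ versus $\lambda\ge2$ (together with the fact that the digit $2$ is admissible only when $\lambda\ge2$), and checking that the repetition count matches the parameter $\sigma\ge1$ in each branch.
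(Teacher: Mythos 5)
Your proof is correct, and it rests on the same engine as the paper's — Lucas' congruence modulo $2$ — but the bookkeeping is genuinely different. The paper works with the raw binary digits of $k$, splitting into the cases $k$ odd and $k$ even, and, within the even case, into $\lambda>1$ versus $\lambda=1$; in each branch the bit-domination condition is unravelled into ``$k_j=0$ unless $j\equiv 0$ (resp.\ $1$) $(\mathrm{mod}\ \lambda)$'' plus a chain condition $k_{\mu\lambda}\le k_{(\mu-1)\lambda}$, so the two families $k=(2^{\lambda\sigma}-1)/(2^\lambda-1)$ and $k=(2^{\lambda\sigma+1}-2)/(2^\lambda-1)$ emerge from separate computations. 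You instead regroup the binary positions into $\lambda$-blocks (base-$2^\lambda$ digits $c_i$ of $k$), note that the binary Lucas condition factorizes into block-wise submask conditions, and split according to whether computing $k-1$ borrows out of the bottom block ($c_0=0$) or not ($c_0\ge 1$). This makes both families appear uniformly as the nested digit chains $1^\sigma 0^\infty$ and $2^\sigma 0^\infty$, collapses the paper's three cases to two, and isolates the $\lambda=1$ anomaly precisely where it arises: the borrow digit $2^\lambda-1$ is a submask of $1$ only for $\lambda=1$, which is why the second family is produced by the no-borrow branch ($c_0=2$) when $\lambda\ge2$ but by the borrow branch when $\lambda=1$. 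The price is the two preliminary observations you need (block factorization of the Lucas criterion, and the borrowing rule in base $2^\lambda$), both of which you state correctly and which are easy to justify; in exchange, the classification is arguably more transparent than the paper's case-by-case verification, since every step in your reduction is an equivalence, so both directions of the ``if and only if'' come for free.
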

\begin{corollary}
\label{Cor:NqTreeCaseParity}
Let $q$ be a Fermat prime, and let $k$ be a positive integer. Then 
\begin{multline*}
\frac{M_q(qk;k-1,k)}{(k-1)!}\equiv1\ (\mathrm{mod}\ 2)\\
\text { if, and only if, }\
k=\frac{(q-1)^\sigma-1}{q-2}\mbox{
or } k=2 \frac{(q-1)^\sigma-1}{q-2}\mbox{ for some }\sigma\geq1. 
\end{multline*}
\end{corollary}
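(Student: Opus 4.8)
The plan is to feed the explicit product formula of Proposition~\ref{Prop:GJ} into the parity criterion of Lemma~\ref{Lem:BinomParity}. First observe that for $k=1$ we have $M_q(q;0,1)/0!=1$, which is odd, consistent with the asserted characterisation since $\sigma=1$ in the first family gives $k=((q-1)-1)/(q-2)=1$; so assume $k\ge2$. Proposition~\ref{Prop:GJ} gives $M_q(qk;k-1,k)=q^k\,((q-1)k)((q-1)k-1)\cdots((q-2)k+3)$, and collapsing the product of consecutive integers into factorials this reads $M_q(qk;k-1,k)=q^k\,\dfrac{((q-1)k)!}{((q-2)k+2)!}$. Hence, writing $R_k:=M_q(qk;k-1,k)/(k-1)!$ (an integer by Corollary~\ref{Cor:SubgpMu=0Enum}(i)), we obtain $R_k=q^k\,T_k$ with $T_k:=\dfrac{((q-1)k)!}{(k-1)!\,((q-2)k+2)!}$. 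Since $q$ is odd, $q^k$ contributes nothing to the $2$-adic valuation, so $v_2(R_k)=v_2(T_k)$; in particular $R_k$ is odd if and only if $v_2(T_k)=0$.

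The second step is to convert $T_k$ into the binomial coefficient appearing in Lemma~\ref{Lem:BinomParity}. Here I would exploit that $q$ is a Fermat prime, so $q-1=2^\lambda$ and $q-2=2^\lambda-1$ for a suitable $\lambda\ge1$. A direct comparison of factorials then yields the identity $\binom{2^\lambda k+1}{k-1}=(2^\lambda k+1)\,T_k$, because $\binom{2^\lambda k+1}{k-1}=\dfrac{(2^\lambda k+1)!}{(k-1)!\,((2^\lambda-1)k+2)!}$ and the two right-hand sides differ only by the factor $(2^\lambda k+1)!/(2^\lambda k)!=2^\lambda k+1$. Note that one cannot simply claim $T_k$ is an integer and read off its parity, since it generally is not; this is precisely why the argument must be phrased through $2$-adic valuations. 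As $2^\lambda k+1$ is odd, the identity gives $v_2(T_k)=v_2\!\left(\binom{2^\lambda k+1}{k-1}\right)$, and therefore $R_k$ is odd if and only if $\binom{2^\lambda k+1}{k-1}$ is odd.

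Finally I would invoke Lemma~\ref{Lem:BinomParity}, which asserts that $\binom{2^\lambda k+1}{k-1}\equiv1\pmod 2$ exactly when $k=(2^{\lambda\sigma}-1)/(2^\lambda-1)$ or $k=(2^{\lambda\sigma+1}-2)/(2^\lambda-1)$ for some $\sigma\ge1$. Substituting $2^\lambda=q-1$ and $2^\lambda-1=q-2$ rewrites these as $k=\dfrac{(q-1)^\sigma-1}{q-2}$ and $k=2\,\dfrac{(q-1)^\sigma-1}{q-2}$, which is precisely the stated criterion (and the base case $k=1$ is subsumed by the first family with $\sigma=1$). The only genuinely delicate point is locating the right auxiliary binomial coefficient and recognising that the discrepancy factor $2^\lambda k+1$ is odd; once Lemma~\ref{Lem:BinomParity} is in hand, the remainder is the bookkeeping translation between the $2$-adic data and the arithmetic of $q$.
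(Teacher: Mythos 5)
Your proposal is correct and takes essentially the same route as the paper: both feed Proposition~\ref{Prop:GJ} into Lemma~\ref{Lem:BinomParity} by recognizing $M_q(qk;k-1,k)/(k-1)!$ as $\binom{(q-1)k+1}{k-1}$ times the odd factor $q^k/((q-1)k+1)$, with $2^\lambda=q-1$. Your $2$-adic valuation phrasing is just a more explicit justification of the congruence
\[
\frac{M_q(qk;k-1,k)}{(k-1)!}\equiv\binom{(q-1)k+1}{k-1}\ (\mathrm{mod}\ 2),
\]
which the paper asserts directly after the same multiply-and-divide-by-$((q-1)k+1)$ manipulation.
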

\begin{proof}
For $k\geq2$, Equation~(\ref{Eq:GJ}) gives
\[
M_q(qk;k-1,k) = q^k\,((q-1)k+1)
((q-1)k)\cdots((q-1)k-k+3)\,\frac{1}{(q-1)k+1}
\]
so that, modulo~$2$,
\[
\frac{M_q(qk;k-1,k)}{(k-1)!} = q^k \binom{(q-1)k+1}{k-1} \frac{1}{(q-1)k+1}\, \equiv\ \binom{(q-1)k+1}{k-1},
\]
a congruence which is also seen to hold for $k=1$. Since $q$ is a Fermat prime, we
have $q-1=2^\lambda$ for some $\lambda\geq1$, and our claim follows
from Lemma~\ref{Lem:BinomParity}. 
\end{proof}
\begin{proposition}
\label{Prop:N3Parity}
For $k\geq1,$ the number $M_3(3k)$ of subgroups $\Delta$ of index $3k$
in the modular group $\mathfrak{H}(3)=\mathrm{PSL}_2(\Z)$ with the
property that $\mu(\Delta)=0,$ is even. 
\end{proposition}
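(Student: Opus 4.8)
The plan is to reduce the assertion to the enumerative work of Stothers \cite{Stothers} on the modular group $\mathfrak H(3)=\mathrm{PSL}_2(\Z)$, since—as the discussion preceding the statement signals—the uniform $2$-adic estimate that settles the cases $q\ge 5$ in Proposition~\ref{Prop:Nq2Part} breaks down when $q=3$. First I would restate what has to be counted in the cleanest possible terms. By \eqref{eq:qnmidn} it suffices to treat indices of the form $n=3k$, and by Corollary~\ref{Cor:SubgpMu=0Enum}(ii) the number $M_3(3k)$ is exactly the number of conjugacy classes of index-$3k$ subgroups $\Delta\le\mathfrak H(3)$ with $\mu(\Delta)=0$; equivalently, via Propositions~\ref{Prop:TransReps} and \ref{Prop:TransRepsRefined}, the number of $U_{3k}$-classes of transitive representations $\varphi$ on $[3k]$ for which $\varphi(y)$ is a fixed-point-free product of $k$ three-cycles. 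By \eqref{Eq:Type/IndexRel} with $q=3$ and $\mu=0$, any such $\Delta$ has isomorphism type ${\bf t}(\Delta)=(3k-2e,0,e-k+1)$ for some integer $e$ with $k-1\le e\le 3k/2$, so that $M_3(3k)$ splits as $\sum_{e}$ of the numbers of classes of type $(3k-2e,0,e-k+1)$. These are precisely the subgroups that are free products of copies of $C_2$ and of $C_\infty$.

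Next I would match these quantities with Stothers' classification. Stothers enumerates the finite-index subgroups of the modular group through their coset diagrams, organised by the number of inequivalent elliptic elements of orders $2$ and $3$—equivalently, by the cycle types of the two canonical generators, which are exactly our invariants $\lambda(\Delta)$ and $\mu(\Delta)$. The subgroups counted by $M_3(3k)$ are the ones carrying no elliptic element of order $3$, and these form one of the families for which \cite{Stothers} supplies explicit data (generating functions together with the recursions they satisfy). I would therefore translate Stothers' parameters into the normalisation of Section~\ref{sec:4.2} and thereby express $M_3(3k)$, and in particular its reduction modulo $2$, in terms of Stothers' counts.

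Finally, the evenness would be read off from this structural description rather than from a termwise bound. Because no $2$-adic estimate of the strength of \eqref{Eq:2In} survives for $q=3$, I expect the parity to emerge from Stothers' recursive/generating-function identities: reducing them modulo $2$ and feeding in the base values (for instance $M_3(3)=4$, which one checks directly from Corollary~\ref{Cor:SubgpMu=0Enum}(i)), one argues by induction on $k$ that $v_2\big(M_3(3k)\big)\ge 1$. Where convenient, the tree contribution $e=k-1$ can be separated off using Corollary~\ref{Cor:NqTreeCaseParity}, which applies since $q=3=2^2-1$ is a Fermat prime. The main obstacle is precisely this last step: one must show that the mod-$2$ cancellations among the contributions of the various admissible types $(3k-2e,0,e-k+1)$ are \emph{forced}, and not merely quantitatively plausible. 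This is where the special combinatorics of the modular group uncovered by Stothers—rather than the generic machinery of Proposition~\ref{Prop:NqComp}—does the essential work, and it is the reason the case $q=3$ must be handled separately from $q\ge 5$.
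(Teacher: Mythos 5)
Your setup is sound and heads in the same direction as the paper's proof --- restrict to indices $3k$, decompose $M_3(3k)$ over the admissible types $(3k-2e,0,e-k+1)$ via Corollary~\ref{Cor:SubgpMu=0Enum}, and import Stothers' enumeration --- but the proposal stops exactly where the proof has to begin, and you say so yourself: the ``main obstacle'' you defer is the entire content of the proposition. Concretely, two things are missing. First, the parity of each individual contribution $M_3(3k;e,k)/(k-1)!$: beyond the tree case $e=k-1$ (which you correctly propose to handle via Corollary~\ref{Cor:NqTreeCaseParity}, giving $\binom{2k+1}{k-1}$ mod $2$, odd precisely for $k=2^\sigma-1$ or $k=2(2^\sigma-1)$), one needs Stothers' closed-form evaluations in the remaining regimes: the contribution $2^{2k-2}3^k$ at $e=k$ (even for $k\ge2$), the free case $e=3k/2$, $k=2\ell$, where the contribution is $3^{2\ell-1}f_3(6\ell)$, and the intermediate range $k<e<3k/2$, where it involves $f_3(6(e-k))$; these must be combined with the parity criterion that $f_3(6\lambda)$ is odd if, and only if, $\lambda+1$ is a non-trivial $2$-power. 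Second, and crucially, the exact matching: the intermediate terms contribute $1$ mod $2$ precisely when $k=2^\tau-1$ with $\tau\ge2$, the free term precisely when $k=2(2^\sigma-1)$ with $\sigma\ge1$, and since these two conditions are disjoint and together exhaust exactly the set of $k\ge2$ for which $\binom{2k+1}{k-1}$ is odd (Lemma~\ref{Lem:BinomParity} with $\lambda=1$), the odd terms cancel in pairs, forcing $M_3(3k)\equiv0$ for $k\ge2$; the case $k=1$ is the direct check $M_3(3)=1+3=4$. Note also that the mechanism is \emph{not} the one you anticipate: no mod-$2$ recursion or induction on $k$ is used (and it is unclear which recursion of Stothers you would reduce); the cancellation is a non-inductive, exact coincidence of three explicit digit-counting conditions. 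Without producing the term-by-term parities and this matching, the claim that the cancellations are ``forced'' remains an expectation rather than an argument.

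One smaller correction: $M_q(n)$ counts subgroups, not conjugacy classes of subgroups. Your subsequent reformulation as $U_{3k}$-orbits of transitive representations (Proposition~\ref{Prop:TransReps}) is the correct one, since those orbits biject with subgroups themselves.
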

\begin{proof}
Our starting point is again Corollary~\ref{Cor:SubgpMu=0Enum} (ii),
more precisely, the fact that 
\begin{equation}
\label{Eq:N3SumDecomp}
M_3(3k) = 3^{-(k-1)}\sum_{k-1\leq e\leq \frac{3k}{2}}
\frac{M_3(3k;e,k)}{(k-1)!},\quad k\geq1. 
\end{equation}
Concerning the summands occurring on the right-hand side of
(\ref{Eq:N3SumDecomp}), Stothers shows the following: 
\begin{align}
\frac{M_3(3k;k-1,k)}{(k-1)!} \,&=\, \frac{3^k\, (2k)!}{(k-1)!\,
(k+2)!},\quad k\geq1,\label{Eq:N3k,k-1}\\[2mm] 
\frac{M_3(3k;k,k)}{(k-1)!}\, &=\, 2^{2k-2}\,3^k,\quad
k\geq1,\label{Eq:N3k,k}\\[2mm] 
\frac{M_3(6\ell;3\ell,2\ell)}{(2\ell-1)!}\,
&=\,3^{2\ell-1}\,f_3(6\ell),\quad\ell\geq1, 
\end{align}
and
\begin{equation}
\label{Eq:N3kGeneralCase}
\frac{M_3(3k;e,k)}{(k-1)!}\, =\,
2^{3k-2e-1}\,3^k\,k\,
\frac{\prod_{\ell=0}^{3k-2e-2}(3k-e-2\ell-2)}{(3k-2e)!}\,f_3(6(e-k)),\quad
k< e< \frac{3k}{2}, 
\end{equation}
where $f_3(n)$ is the number of free subgroups of index $n$
in $\mathfrak H(3)$, and the product in (\ref{Eq:N3kGeneralCase}) has to be evaluated as $1$ if $2e=3k-1$;
cf.\ Propositions~1.7 and 1.8 and Formula (3) in \cite{Stothers}. Of
course, Equation~(\ref{Eq:N3k,k-1}) also follows from our
Proposition~\ref{Prop:GJ}. By (\ref{Eq:N3kGeneralCase}) and Legendre's
formula for the $p$-adic valuation of factorials, we have, 
\begin{multline}
\label{Eq:N3General2Part}
v_2\Big(\frac{M_3(3k;e,k)}{(k-1)!}\Big) =
\big(\mathfrak{s}_2(3k-2e)-1\big)\,+\,v_2(k)\,+\,v_2(f_3(6(e-k)))\\
+\,
\sum_{\ell=0}^{3k-2e-2}
v_2(3k-e-2\ell-2),\quad \quad \quad \quad  k<e<\frac{3k}{2}, 
\end{multline}
where $\mathfrak{s}_2(x)$ is the sum of digits in the binary expansion
of $x$. Further, it is known that $f_3(6\lambda)$ is odd if, and only
if, $\lambda+1$ is a non-trivial $2$-power;
cf. \cite[Cor.~1.11]{Stothers} or
\cite[Proposition~6]{MuCAFGVFG}. Using this fact together with
(\ref{Eq:N3General2Part}), one finds that, for $k<e<\frac{3k}{2}$, 
\begin{multline}
\label{Eq:N3kGeneralParity}
\frac{M_3(3k;e,k)}{(k-1)!}\equiv
1\ (\mathrm{mod}\ 2)\
\text { if, and only if, }\\
e=2^{\sigma+1} + 2^\sigma -2\mbox{ and
}k=2^{\sigma+1}-1\mbox{ for some }\sigma\geq1. 
\end{multline}
Indeed, since $2e<3k$, we have $\mathfrak{s}_2(3k-2e)\geq1$; hence,
all summands on the right-hand side of (\ref{Eq:N3General2Part}) are
non-negative. For this right-hand side to vanish, it is thus necessary
and sufficient that (i) $k$ is odd, (ii) $e-k+1=2^\sigma$ for some
$\sigma\geq1$, and (iii) $3k-2e=2^\lambda$ for some $\lambda\geq0$. It
follows from (ii) and (iii) that $k=2^{\sigma+1} + 2^\lambda-2$, which
is odd only for $\lambda=0$, whence (\ref{Eq:N3kGeneralParity}).

The parity behaviour of $f_3(6\lambda)$ also gives that
\begin{equation}
\label{Eq:N3FreeCaseParity}
\frac{M_3(6\ell;3\ell,2\ell)}{(2\ell-1)!}\equiv
1\ (\mathrm{mod}\ 2)\
\text { if, and only if, }\
\ell+1=2^\sigma \mbox{ for some }
\sigma\geq1. 
\end{equation}
From (\ref{Eq:N3SumDecomp})--(\ref{Eq:N3k,k}),
(\ref{Eq:N3kGeneralParity}), and (\ref{Eq:N3FreeCaseParity}), we
deduce that, modulo~$2$, 
\begin{align*}
M_3(3k) &= \frac{3(2k)!}{(k-1)!\, (k+2)!}\,+\,3\cdot
2^{2k-2}\\
&\kern3cm
+\,
\sum_{k<e<\frac{3k}{2}}\frac{M_3(3k;e,k)}{3^{k-1}\,(k-1)!}\,+\,
\left.\begin{cases}
                \frac{\displaystyle
M_3\Big(3k;\frac{3k}{2},k\Big)}{\displaystyle
3^{k-1}\,(k-1)!},&2\mid k\\[3mm]
                0,& 2\nmid  k
                \end{cases}\right\}\\[3mm]
&\equiv \binom{2k+1}{k-1}\,\,+\,\,\left.\begin{cases}
                                 1;& k=2^\tau-1,\,\tau\geq2\\[1mm]
                                 0;& \mbox{otherwise}
                                 \end{cases}\right\}\,\,+\,\,
\left.\begin{cases}
   1;& k=2(2^\sigma-1),\, \sigma\geq1\\[1mm]
   0;&\mbox{otherwise,}
   \end{cases}\right\}
\end{align*}
provided that $k\geq2$. By Lemma~\ref{Lem:BinomParity} with
$\lambda=1$, we have 
\[
\binom{2k+1}{k-1}\equiv 1\ (\mathrm{mod}\ 2)\
\text { if, and only if, }\
  k=2^\sigma-1
\mbox{ or } k=2(2^\sigma-1)\mbox{ for some } \sigma\geq1. 
\]
Combining the last two assertions, we find that 
\[
M_3(3k)\equiv 0\ \mathrm{mod}\ 2,\quad k\geq2.
\]
Finally,
\[
M_3(3) = M_3(3;0,1) + M_3(3;1,1) =1 + 3 \equiv 0\ \mathrm{mod}\ 2,
\]
and the proof is complete.
\end{proof}
Putting together the observation that $M_q(n)=0$ whenever $q\nmid n$
(see the start of this section) with Propositions~\ref{Prop:Nq2Part} and
\ref{Prop:N3Parity}, we obtain the main result of this section.

\begin{theorem}
\label{Thm:NqnParity}
For each odd prime $q$ and every integer $n\geq1,$ the number
$M_q(n)$ of index $n$ subgroups $\Delta$ in $\mathfrak{H}(q)$ with the
property that $\mu(\Delta)=0,$ is even. 
\end{theorem}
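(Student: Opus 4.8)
The plan is to treat the theorem as an assembly of results already established in this section, organised around a single case distinction: whether or not $q$ divides $n$. First I would invoke the observation recorded in \eqref{eq:qnmidn}, namely that a subgroup $\Delta$ of index $n$ with $\mu(\Delta)=0$ forces $q\mid n$, via the type/index relation \eqref{Eq:Type/IndexRel} together with the primality of $q$. Consequently $M_q(n)=0$ whenever $q\nmid n$, and in particular $M_q(n)\equiv 0\ (\mathrm{mod}\ 2)$ throughout that range. This disposes of all indices not divisible by $q$, so it remains only to handle the case $n=qk$ with $k\geq 1$.

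For $n=qk$ I would split according to the size of $q$. When $q\geq 5$, Proposition~\ref{Prop:Nq2Part} already yields the stronger statement that the $2$-adic valuation $v_2(M_q(qk))$ is bounded below by the positive quantity appearing on the right-hand side of \eqref{Eq:v2Nqk}; in particular $M_q(qk)$ is even. For the single remaining prime $q=3$, Proposition~\ref{Prop:N3Parity} establishes directly that $M_3(3k)$ is even for every $k\geq 1$. Combining these two facts with the vanishing from the first paragraph gives $M_q(n)\equiv 0\ (\mathrm{mod}\ 2)$ for every odd prime $q$ and every integer $n\geq 1$, which is exactly the assertion.

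The genuine obstacle does not reside in this assembly step, which is routine, but in the two propositions being invoked, and these have already been proved. The uniform $2$-adic estimate for $q\geq 5$ rests on the explicit formula of Proposition~\ref{Prop:NqComp}, fed through the valuation bound \eqref{Eq:2In} for involution numbers and the multinomial estimates of Lemma~\ref{lem:multi}; the excluded prime $q=3$ resists the same treatment, because the explicit formula is not amenable to this analysis, and one must instead draw on Stothers' closed-form expressions for the summands $M_3(3k;e,k)/(k-1)!$ together with the known parity of the free-subgroup numbers $f_3(6\lambda)$. The only work peculiar to the theorem itself is therefore the verification that the three cases---$q\nmid n$, the case $q\geq 5$, and the case $q=3$---between them exhaust all possibilities, which they plainly do.
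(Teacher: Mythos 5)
Your proposal is correct and matches the paper's own proof exactly: the paper likewise combines the vanishing $M_q(n)=0$ for $q\nmid n$ (from \eqref{eq:qnmidn}) with Proposition~\ref{Prop:Nq2Part} for $q\geq 5$ and Proposition~\ref{Prop:N3Parity} for $q=3$. Nothing further is needed.
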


\section{Generalized parity patterns of Hecke groups}
\label{Sec:GenParHecke}

The main result of this section computes the parity behaviour of the
generalized subgroup numbers of $\mathfrak H(q)$, with $q$ an odd
prime, in the case when $H$ is of even order; see
Theorem~\ref{Thm:SolvSmod2}. Our interest in this
computation stems from the fact that, in this setting, 
$s_q^H(n):=s_{\mathfrak H(q)}^H(n)\equiv
N_q(n)\ (\mathrm{mod}\ 2)$, where the $N_q(n)$'s 
enter into the mod~$2$ calculation of
$s_{\Ga_m(q)}^H(n)$ via Proposition~\ref{Prop:Reduction1}. Indeed, writing
\[
\mathfrak{U} \cong C_2^{\ast\lambda(\mathfrak{U})} \ast
C_q^{\ast\mu(\mathfrak{U})} \ast F_{\nu(\mathfrak{U})} 
\]
for a subgroup $\mathfrak{U}$ in $\mathfrak{H}(q)$,
in accordance with Kurosh's subgroup theorem, we have 
$$
s_q^H(n) = \sum_{(\mathfrak{H}(q):\mathfrak{U})=n}
\vert\Hom(C_2,H)\vert^{\lambda(\mathfrak{U})}\cdot
\vert\Hom(C_q,H)\vert^{\mu(\mathfrak{U})}\cdot \vert
H\vert^{\nu(\mathfrak{U})}.
$$
By Frobenius' theorem (cf.\ \cite{Frob2} or
\cite[Theorem~9.1.2]{MHall}) concerning the equation $x^n=1$ in a
finite group, the assumption 
$\vert H\vert\equiv0\ (\mathrm{mod}\ 2)$ implies that
$\vert\Hom(C_2,H)\vert$ is even. Furthermore, we have
\begin{equation} \label{eq:Cqung}
\vert\Hom(C_q,H)\vert
  = 1 + (q-1)\times\mbox{(number of subgroups 
$U\cong C_q$ in $H$)}\equiv 1\ \mathrm{mod}\ 2.
\end{equation}
Hence, if $\vert H\vert$ is even, it follows that, modulo~2,
\begin{equation}
s^H_q(n)\equiv
\underset{\lambda(\mathfrak{U})=\nu(\mathfrak{U})=0}
{\underset{(\mathfrak{H}(q):\mathfrak{U})=n}{\sum_\mathfrak{U}}}
1 = N_q(n),
\label{eq:s=N}
\end{equation}
as claimed.

On the other hand,
it was shown in particular in \cite{MRepres} that --- just as subgroup
numbers are connected with the enumeration of permutation
representations --- the generalized subgroup numbers $s_\Gamma^H(n)$ of
a finitely generated group $\Gamma$ are related to the function
$\vert\Hom(\Gamma,H\wr S_n)\vert$ counting monomial representations of
$\Gamma$ via the identity 
\begin{equation}
\label{Eq:MonRepsGenFunc}
\sum_{n=0}^\infty 
\vert\Hom(\Gamma, H\wr S_n)\vert \frac{z^n}{\vert H\vert^n n!} =
\exp\bigg(\frac{1}{\vert H\vert}\sum_{n=1}^\infty s_\Gamma^H(n)
\frac{z^n}{n}\bigg) ,
\end{equation}
or, what comes to the same thing, via the recurrence relation
\begin{equation}
\label{Eq:MonRepsRec}
n\,\vert H\vert \,h_\Gamma^H(n) = \sum_{k\geq1} s_\Gamma^H(k)
h_\Gamma^H(n-k),\quad n\in\Z, 
\end{equation}
where
\[
h_\Gamma^H(n):= \begin{cases}
                    \vert\Hom(\Gamma,H\wr S_n)\vert/(\vert H\vert^n
n!),&n\geq0\\[2mm]
                    0,& n<0.
                    \end{cases}
\]
This follows from \cite[Cor.~1]{MRepres} by setting
$\Sigma=\emptyset$, $\Lambda=\N$, $M=\N_0$, and replacing the variable
$z$ by $z/\vert H\vert$. For a (relatively) untechnical account of the
theory of generalized permutation representations, which gives rise
(among other things) to Formula (\ref{Eq:MonRepsGenFunc}), the reader
may consult the survey papers \cite{MKorea} and
\cite{MBrasilia}. Introducing the series 
\[
\mathcal{H}_\Gamma^H(z):= \sum_{n=0}^\infty \vert\Hom(\Gamma, H\wr
S_n)\vert \frac{z^n}{\vert H\vert^n n!} 
\]
and
\[
\mathcal{S}_\Gamma^H(z):= \sum_{n=0}^\infty s_\Gamma^H(n+1) z^n,
\]
Identity (\ref{Eq:MonRepsGenFunc}) takes the form
\begin{equation}
\label{Eq:WreathTransform}
\mathcal{H}_\Gamma^H(z) = \exp\bigg(\frac{1}{\vert H\vert} \int
\mathcal{S}_\Gamma^H(z)\,dz\bigg). 
\end{equation}

In view of the preceding observations, our next goal will be to derive
a linear differential equation with polynomial coefficients for the
generating function $\mathcal H_{\mathfrak H(q)}^H(z)$. Using 
Relation \eqref{Eq:WreathTransform} together with the Fa\`a di Bruno
formula (see \eqref{Eq:Bell} below), this linear differential equation will
give rise to a Riccati-type differental equation for the generating
function $\mathcal S_{\mathfrak H(q)}^H(z)$. 
It is the latter differential equation that we are actually
interested in; see \eqref{Eq:PrincEq}.

\subsection{A system of linear differential equations for generating
functions related to $\mathcal H_{\mathfrak H(q)}^H(z)$}
\label{sec:5.1}

The starting point for our computations concerning the series
$\mathcal H_{\mathfrak H(q)}^H(z)$ is the observation that, by the
universal mapping property of free products,
\begin{equation} \label{eq:Hom=HomHom}
\vert\Hom(\mathfrak H(q),H\wr S_n)\vert=
\vert\Hom(C_2,H\wr S_n)\vert\cdot \vert\Hom(C_q,H\wr S_n)\vert.
\end{equation}
Setting $h:=\vert H\vert$,
$a:=\vert\Hom(C_2,H)\vert$, $b:= \vert\Hom(C_q,H)\vert$, 
\[
\alpha_n:=\begin{cases}
              \vert\Hom(C_2,H\wr S_n)\vert,&n\geq0\\[2mm]
              0,& n<0,
              \end{cases}
\]
and
\[
\beta_n:=\begin{cases}
              \vert\Hom(C_q,H\wr S_n)\vert,&n\geq0\\[2mm]
              0,& n<0.
              \end{cases}
\]
Equation (\ref{Eq:MonRepsRec}) specializes to the relations
\begin{equation}
\label{Eq:2WreathRec}
\alpha_{n+1} = a\alpha_n + hn\alpha_{n-1},\quad n\neq-1
\end{equation}
and
\begin{equation}
\label{Eq:qWreathRec}
\beta_{n+1} = b\beta_n + h^{q-1}
\frac {n!} {(n-q+1)!}
\beta_{n-q+1},\quad n\neq-1, 
\end{equation}
where we use the same convention concerning $\frac{1}{n!}$ as in 
Formula~(\ref{Eq:In}).
For $n,k\in\Z$, let 
\[
A_k(n):= \frac{\alpha_n\,\beta_{n-k}}{h^{n-k}\, (n-k)!},            
\]
still making use of the same convention concerning $\frac{1}{n!}$, 
and set
\[
F_k(z):= \sum_{n\in\Z} A_k(n) z^n.
\]
The reader should note that, in view of \eqref{eq:Hom=HomHom},
we have $F_0(z)=\mathcal H_{\mathfrak H(q)}^H(z)$.

Multiplying (\ref{Eq:2WreathRec}) by $\beta_{n-k}/(h^{n-k}(n-k)!)$ and
(\ref{Eq:qWreathRec}) by $\alpha_{n+k}/(h^nn!)$, we obtain the
relations 
\begin{equation}
\label{Eq:ARelI}
A_{k+1}(n+1) = a A_k(n) + hnA_{k-1}(n-1),\quad n,k\in\Z,\,k\geq0,
\end{equation}
respectively
\begin{equation}
\label{Eq:ARelII}
h(n+1)A_{k-1}(n+k) = b A_k(n+k) + A_{k+q-1}(n+k),\quad n,k\in\Z.
\end{equation}
Multiplication by $z^{n+1}$ and summation over $n\in\Z$, transforms Equation
(\ref{Eq:ARelI}) into the relation 
\begin{equation}
\label{Eq:FRelI}
F_{k+1}(z) = azF_k(z) + hz^2F_{k-1}(z) + hz^3F_{k-1}'(z),\quad k\geq0,
\end{equation}
while multiplication by $z^{n+k}$ and summation over $n\in\Z$ turns
(\ref{Eq:ARelII}) into 
\begin{equation}
\label{Eq:FRelII}
hzF_{k-1}'(z) = h(k-1)F_{k-1}(z) + bF_k(z) + F_{k+q-1}(z),\quad k\in\Z. 
\end{equation}
Clearly, by iterating Relation (\ref{Eq:FRelI}), we can express every
function $F_k$ with $k\geq0$ in terms of derivatives of $F_0$ and $F_1$ alone; more
precisely, define integral coefficient systems $\big(c_k^{(\mu)}\big)$
and $\big(d_k^{(\nu)}\big)$ for $k\geq0$ and
$0\leq\mu\leq\big\lfloor\frac{k}{2}\big\rfloor$, respectively $k\geq1$
and $0\leq\nu\leq\big\lfloor\frac{k-1}{2}\big\rfloor$, via 
\begin{align*}
c_0^{(0)} &= 1,\, c_1^{(0)} = 0,\, c_2^{(0)} = h,\, c_2^{(1)} = 1,\\
d_1^{(0)} &= 1,\, d_2^{(0)} = a,
\end{align*}
\[
c_{k+1}^{(\mu)} = \left.\begin{cases}
                     a c_k^{(0)} + hkc_{k-1}^{(0)},&\mu=0\\[2mm]
                     ac_k^{(\mu)} + h(k+\mu)c_{k-1}^{(\mu)} +
c_{k-1}^{(\mu-1)},&
1\leq\mu\leq\big\lfloor\frac{k-1}{2}\big\rfloor\\[2mm]
                     c_{k-1}^{(\frac{k-1}{2})},&
\mu=\big\lfloor\frac{k+1}{2}\big\rfloor, k\mbox{ odd}\\[2mm]
                     ac_k^{(k/2)} + c_{k-1}^{(\frac{k-2}{2})},&
\mu=\big\lfloor\frac{k+1}{2}\big\rfloor, k\mbox{ even}
                     \end{cases}\right\}\quad(k\geq2),
\]
and
\[
d_{k+1}^{(\nu)} = \left.\begin{cases}
                     a d_k^{(0)} + h(k-1)d_{k-1}^{(0)},&\nu=0\\[2mm]
                     ad_k^{(\nu)} + h(k+\nu-1)d_{k-1}^{(\nu)} +
d_{k-1}^{(\nu-1)},&
1\leq\nu\leq\big\lfloor\frac{k-2}{2}\big\rfloor\\[2mm]
                     ad_k^{(\frac{k-1}{2})} +
d_{k-1}^{(\frac{k-3}{2})},&\nu=\big\lfloor\frac{k}{2}\big\rfloor,
k\mbox{ odd}\\[2mm]
                     d_{k-1}^{(\frac{k-2}{2})},&
\nu=\big\lfloor\frac{k}{2}\big\rfloor, k\mbox{ even}
                     \end{cases}\right\}\quad(k\geq2).
\]
Then we have the following.
\begin{lemma}
\label{Lem:FkRed}
With $\big(c_k^{(\mu)}\big)$ and $\big(d_k^{(\nu)}\big)$ as above,
\begin{equation}
\label{Eq:FkRed}
F_k(z) = \sum_{\mu=0}^{\lfloor\frac{k}{2}\rfloor} c_k^{(\mu)} h^\mu
z^{k+\mu} F_0^{(\mu)}(z)\, +
\,\sum_{\nu=0}^{\lfloor\frac{k-1}{2}\rfloor} d_k^{(\nu)} h^\nu
z^{k+\nu-1} F_1^{(\nu)}(z),\quad k\geq0. 
\end{equation}
\end{lemma}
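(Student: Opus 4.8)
The plan is to prove \eqref{Eq:FkRed} by induction on $k$, using Relation \eqref{Eq:FRelI} as the sole engine and treating the $F_0$-terms and the $F_1$-terms separately, since under \eqref{Eq:FRelI} they never mix.

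First I would dispose of the base cases. For $k=0$ the second sum in \eqref{Eq:FkRed} is empty (its upper limit is $\lfloor -1/2\rfloor=-1$) and the first collapses to $c_0^{(0)}F_0(z)=F_0(z)$, as required; for $k=1$ the first sum contributes $c_1^{(0)}zF_0(z)=0$ while the second gives $d_1^{(0)}F_1(z)=F_1(z)$. It is worth also checking $k=2$ against \eqref{Eq:FRelI} directly, because the value $c_2^{(0)}=h$ already signals that the $c_k^{(\mu)}$ are genuine polynomials in $a,h$ rather than pure constants: the factored-out $h^\mu$ in \eqref{Eq:FkRed} records only part of the $h$-dependence, and overlooking this is the one place the bookkeeping can go astray.

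For the inductive step, assuming \eqref{Eq:FkRed} for all indices $\le k$, I would substitute the formulas for $F_k$ and $F_{k-1}$ into $F_{k+1}=azF_k+hz^2F_{k-1}+hz^3F_{k-1}'$ and collect the coefficient of each monomial $h^\mu z^{(k+1)+\mu}F_0^{(\mu)}$ (and, analogously, of $h^\nu z^{k+\nu}F_1^{(\nu)}$). Differentiating a single term $c_{k-1}^{(\mu)}h^\mu z^{(k-1)+\mu}F_0^{(\mu)}$ and multiplying by $hz^3$ produces exactly two contributions: one proportional to $((k-1)+\mu)z^{(k+1)+\mu}F_0^{(\mu)}$, from differentiating the power of $z$, and one proportional to $z^{(k+2)+\mu}F_0^{(\mu+1)}$, from differentiating $F_0^{(\mu)}$. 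The term $hz^2F_{k-1}$ supplies a further copy of $h\,c_{k-1}^{(\mu)}$ at the same monomial, and $azF_k$ supplies $a\,c_k^{(\mu)}$. After reindexing the $F_0^{(\mu+1)}$ piece by $\mu\mapsto\mu-1$, the total coefficient of $h^\mu z^{(k+1)+\mu}F_0^{(\mu)}$, divided by $h^\mu$, equals
\[
a\,c_k^{(\mu)} + h\bigl(1+(k-1)+\mu\bigr)c_{k-1}^{(\mu)} + c_{k-1}^{(\mu-1)}
= a\,c_k^{(\mu)} + h(k+\mu)c_{k-1}^{(\mu)} + c_{k-1}^{(\mu-1)},
\]
which is exactly the defining recurrence for $c_{k+1}^{(\mu)}$. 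The $F_1$-computation is identical up to the uniform shift $z^{k+\nu-1}$ in place of $z^{k+\mu}$ in the exponent, which replaces $k+\mu$ by $k+\nu-1$ and reproduces the stated recurrence for $d_{k+1}^{(\nu)}$.

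The delicate point, and the part I would write out most carefully, is the behaviour at the top of the summation ranges, namely at $\mu=\lfloor (k+1)/2\rfloor$ (respectively $\nu=\lfloor k/2\rfloor$), where the uniform recurrence must degenerate into the various boundary clauses of the definition. Here one splits according to the parity of $k$ and checks which of $c_k^{(\mu)}$, $c_{k-1}^{(\mu)}$, $c_{k-1}^{(\mu-1)}$ actually lie within range: for $k$ odd only $c_{k-1}^{((k-1)/2)}$ survives, giving $c_{k+1}^{(\mu)}=c_{k-1}^{((k-1)/2)}$, whereas for $k$ even the term $c_{k-1}^{(\mu)}$ drops out and one is left with $a\,c_k^{(k/2)}+c_{k-1}^{((k-2)/2)}$. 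These match precisely the boundary clauses in the definition of $c_{k+1}^{(\mu)}$, and the analogous check settles $d_{k+1}^{(\nu)}$, completing the induction and hence the proof of \eqref{Eq:FkRed}.
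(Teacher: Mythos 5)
Your proposal is correct and follows essentially the same route as the paper's own proof: induction on $k$ driven solely by Relation~(\ref{Eq:FRelI}), with the base cases $k=0,1,2$ checked directly, the generic coefficient collection matching the defining recurrences for $c_{k+1}^{(\mu)}$ and $d_{k+1}^{(\nu)}$, and the boundary clauses at $\mu=\lfloor(k+1)/2\rfloor$ (resp.\ $\nu=\lfloor k/2\rfloor$) verified by parity of $k$. The coefficient bookkeeping you carry out, in particular the identity $a\,c_k^{(\mu)}+h\bigl(1+(k-1)+\mu\bigr)c_{k-1}^{(\mu)}+c_{k-1}^{(\mu-1)}=c_{k+1}^{(\mu)}$ and its $d$-analogue with $k+\nu-1$, is exactly what the paper's appendix computation does.
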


The proof of Lemma~\ref{Lem:FkRed}, which, again, is straightforward 
but somewhat technical, will be given in Section~\ref{Subsec:Lem18} in
the appendix. 

For later usage, we record evaluations of $c_k^{(\mu)}$
and $d_k^{(\nu)}$ modulo~$2$ as given by the following two lemmas.

\begin{lemma}
\label{Lem:CoeffMod2}
For $h\equiv0\ (\mathrm{mod}\ 2),$ we have
\begin{align}
c_k^{(\mu)}&\equiv\delta_{2\mu,k}\ \mathrm{mod}\ 2,\quad
\mu\geq0,\,k\geq2\mu,\label{Eq:cmod2}\\[2mm] 
d_k^{(\nu)}&\equiv\delta_{2\nu+1,k}\ \mathrm{mod}\ 2,\quad\nu\geq0,\,k\geq2\nu+1,
\label{Eq:dmod2}
\end{align}
where $\delta_{s,t}$ is the Kronecker delta.
\end{lemma}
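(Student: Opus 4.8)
The plan is to prove both congruences simultaneously by induction on $k$, after first reducing the defining recurrences modulo~$2$. The crucial preliminary observation is that the hypothesis $h\equiv0\ (\mathrm{mod}\ 2)$ forces $a\equiv0\ (\mathrm{mod}\ 2)$ as well: since $h=\vert H\vert$ is even and $a=\vert\Hom(C_2,H)\vert$ counts the solutions of $x^2=1$ in $H$, Frobenius' theorem (already invoked in \eqref{eq:Cqung}) guarantees that $a$ is divisible by $\gcd(2,\vert H\vert)=2$. Hence, modulo~$2$, both parameters $a$ and $h$ vanish, and every summand in the recurrences for $c_{k+1}^{(\mu)}$ and $d_{k+1}^{(\nu)}$ carrying a factor of $a$ or $h$ disappears.

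First I would record the reduced recurrences. Setting $a\equiv h\equiv0$, the four-case recurrence for $c_{k+1}^{(\mu)}$ collapses: the $\mu=0$ branch gives $c_{k+1}^{(0)}\equiv0$, while in each of the remaining three branches---including the two boundary branches at $\mu=\lfloor\frac{k+1}{2}\rfloor$, which must be treated separately according to the parity of $k$---the surviving summand is exactly $c_{k-1}^{(\mu-1)}$. Thus $c_{k+1}^{(\mu)}\equiv c_{k-1}^{(\mu-1)}\ (\mathrm{mod}\ 2)$ for $\mu\geq1$, and $c_{k+1}^{(0)}\equiv0$ for $k\geq2$. An identical computation for the $d$-system yields $d_{k+1}^{(\nu)}\equiv d_{k-1}^{(\nu-1)}\ (\mathrm{mod}\ 2)$ for $\nu\geq1$, and $d_{k+1}^{(0)}\equiv0$ for $k\geq2$.

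With these in hand, the inductions are immediate. For \eqref{Eq:cmod2} the base values $c_0^{(0)}=1$, $c_1^{(0)}=0$, $c_2^{(0)}=h\equiv0$, and $c_2^{(1)}=1$ all agree with $\delta_{2\mu,k}$. For the inductive step, $c_{k+1}^{(0)}\equiv0=\delta_{0,k+1}$ (as $k+1\geq3$), while for $\mu\geq1$ the reduced recurrence together with the induction hypothesis gives $c_{k+1}^{(\mu)}\equiv c_{k-1}^{(\mu-1)}\equiv\delta_{2(\mu-1),k-1}=\delta_{2\mu,k+1}$. The argument for \eqref{Eq:dmod2} runs in parallel, starting from $d_1^{(0)}=1$ and $d_2^{(0)}=a\equiv0$ and using the shift $\delta_{2(\nu-1)+1,k-1}=\delta_{2\nu+1,k+1}$.

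The only point requiring care---and the closest thing to an obstacle---is the bookkeeping needed to confirm that the two boundary branches of each recurrence (those active only when $\mu$ or $\nu$ equals $\lfloor\frac{k+1}{2}\rfloor$, with their dependence on the parity of $k$) really do reduce modulo~$2$ to the single term $c_{k-1}^{(\mu-1)}$, respectively $d_{k-1}^{(\nu-1)}$. Once this case analysis is carried out, the two diagonal recurrences drive the induction with no further input.
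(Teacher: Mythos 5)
Your proposal is correct and follows essentially the same route as the paper's own proof: observe that $h\equiv0\ (\mathrm{mod}\ 2)$ forces $a\equiv0\ (\mathrm{mod}\ 2)$, collapse all branches of the defining recurrences modulo~$2$ to the diagonal shifts $c_{k+1}^{(\mu)}\equiv c_{k-1}^{(\mu-1)}$ and $d_{k+1}^{(\nu)}\equiv d_{k-1}^{(\nu-1)}$ (with the $\mu=0$, $\nu=0$ branches vanishing), and then induct on $k$ from the stated initial values. Your explicit verification of the boundary branches is exactly the case analysis the paper leaves implicit in its phrase ``the definition of $c_k^{(\mu)}$ simplifies modulo~$2$,'' so there is nothing further to add.
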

\begin{proof}
As we observed at the start of this section, 
the assumption $h\equiv0\bmod{2}$ implies that
$a\equiv0\bmod{2}$.
Hence, the definition of $c_k^{(\mu)}$ simplifies
modulo~$2$ to 
\begin{align*}
c_0^{(0)}&\equiv c_2^{(1)}\equiv 1\\[2mm]
c_1^{(0)} &\equiv c_2^{(0)}\equiv 0\\[2mm]
c_k^{(\mu)} &\equiv \left.\begin{cases}
                        0,& \mu=0\\[2mm]
                        c_{k-2}^{(\mu-1)},&\mu\geq1
                        \end{cases}\right\}\quad(k\geq3).
\end{align*} 
We now argue by induction on $k$. For $k\leq2$, Formula
(\ref{Eq:cmod2}) clearly holds. Supposing that (\ref{Eq:cmod2}) holds
true for $k<K$ with some $K\geq3$, the inductive hypothesis gives 
\[
c_K^{(\mu)} \equiv\left\{ \begin{matrix}
                       0,\hfill&\mu=0\hfill\\[2mm]
                       \delta_{2(\mu-1),K-2},\hfill&\mu\geq1\hfill
                       \end{matrix}\right\}\, =\, \delta_{2\mu,K}
\ \mathrm{mod}\ 2,\quad\mu\leq\Big\lfloor\frac{K}{2}\Big\rfloor, 
\]
as required. The proof of Formula~(\ref{Eq:dmod2}) is similar.
\end{proof}

\begin{lemma}
\label{Lem:CoeffMod2hunger}
For $h\equiv1\ (\mathrm{mod}\ 2),$ we have
\begin{align}
c_k^{(\mu)}&\equiv
\left\{\begin{matrix}
\delta_{2\mu,k}+\delta_{2\mu+1,k};\hfill&\text{if }\mu
\text{ is odd}\hfill\\
\delta_{2\mu,k}+\delta_{2\mu+2,k}+\delta_{2\mu+3,k};\hfill&
\text{if }\mu\text{ is even}\hfill
\end{matrix}\right\}\ \mathrm{mod}\ 2,\quad
\mu\geq0,\,k\geq2\mu,\label{Eq:cmod2hunger}\\[2mm]
d_k^{(\nu)}&\equiv
\left\{\begin{matrix}
\delta_{2\mu+1,k};\hfill&\text{if }\mu\text{ is odd}\hfill\\
\delta_{2\mu+1,k}+\delta_{2\mu+2,k};\hfill&
\text{if }\mu\text{ is even}\hfill
\end{matrix}\right\}\ \mathrm{mod}\ 2,\quad\nu\geq0,\,k\geq2\nu+1,
\label{Eq:dmod2hunger}
\end{align}
where, again, $\delta_{s,t}$ is the Kronecker delta.
\end{lemma}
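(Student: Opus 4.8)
The plan is to run the same kind of induction on $k$ as in the proof of Lemma~\ref{Lem:CoeffMod2}, the essential preliminary being the evaluation of $a=\vert\Hom(C_2,H)\vert$ in the present regime. Since $h=\vert H\vert$ is now odd, a homomorphism $C_2\to H$ must send the generator to an element $g\in H$ with $g^2=1$, and in a group of odd order the only such element is the identity; hence $a=1$, so that both $a$ and $h$ are $\equiv1\pmod2$. With this, the defining recurrences collapse modulo~$2$ to
\[
c_{k+1}^{(\mu)}\equiv c_k^{(\mu)}+(k+\mu)\,c_{k-1}^{(\mu)}+c_{k-1}^{(\mu-1)}\pmod2,\qquad 1\le\mu\le\fl{\tfrac{k-1}2},
\]
together with $c_{k+1}^{(0)}\equiv c_k^{(0)}+k\,c_{k-1}^{(0)}$ and the two boundary branches at $\mu=\fl{\tfrac{k+1}2}$ (which read $c_{k+1}^{(\mu)}\equiv c_{k-1}^{((k-1)/2)}$ for $k$ odd and $c_{k+1}^{(\mu)}\equiv c_k^{(k/2)}+c_{k-1}^{((k-2)/2)}$ for $k$ even), and similarly for the $d_k^{(\nu)}$. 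The one genuine difference from Lemma~\ref{Lem:CoeffMod2} is that the coefficients $hk$ and $h(k+\mu)$ no longer die; they survive as the parities $k\bmod2$ and $(k+\mu)\bmod2$, and it is exactly the interplay of these with the parity of $\mu$ that is responsible for the more elaborate indicator patterns in \eqref{Eq:cmod2hunger}--\eqref{Eq:dmod2hunger}.

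I would prove \eqref{Eq:cmod2hunger} by induction on $k$, the cases $k\le2$ being immediate from the initial data $c_0^{(0)}=c_2^{(1)}=1$, $c_1^{(0)}=0$, $c_2^{(0)}=h\equiv1$. It is cleanest to read the claim as a description of the \emph{support} of $c_\bullet^{(\mu)}$ for each fixed $\mu$: as $k$ runs through $2\mu,2\mu+1,\dots$, one should get $c_k^{(\mu)}\equiv1$ precisely for $k\in\{2\mu,2\mu+1\}$ when $\mu$ is odd and precisely for $k\in\{2\mu,2\mu+2,2\mu+3\}$ when $\mu$ is even. In the inductive step the coupling term $c_{k-1}^{(\mu-1)}$ is what \emph{seeds} the support at level $\mu$ from the already-known support at level $\mu-1$; concretely the boundary branches produce the seed $c_{2\mu}^{(\mu)}\equiv c_{2\mu-2}^{(\mu-1)}\equiv1$ and determine $c_{2\mu+1}^{(\mu)}$, after which the interior recurrence propagates and eventually extinguishes the support, the factor $(k+\mu)\bmod2$ deciding at each step whether the middle term $c_{k-1}^{(\mu)}$ is active.

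The bulk of the work --- and the only real obstacle --- is the parity bookkeeping in the interior branch: for each of the finitely many offsets $k-2\mu\in\{0,1,2,3,4\}$ one must check that substituting the inductive hypothesis reproduces the asserted value, and that the right-hand side vanishes identically once $k-2\mu$ exceeds the top of the support, where all three contributing terms are already zero by hypothesis. Splitting the verification according to the parity of $\mu$ (which pins down both the target pattern and, jointly with the parity of $k$, the value of $(k+\mu)\bmod2$) keeps the case analysis finite and routine; for instance, when $\mu$ is even one checks that the seed $c_{2\mu}^{(\mu)}\equiv1$ is carried to $c_{2\mu+2}^{(\mu)}\equiv c_{2\mu+3}^{(\mu)}\equiv1$ while $c_{2\mu+1}^{(\mu)}\equiv0$, and that $c_k^{(\mu)}\equiv0$ for all $k\ge2\mu+4$. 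The proof of \eqref{Eq:dmod2hunger} is entirely parallel: the $d$-recurrence differs only in that the surviving coefficient $k+\mu$ is replaced by $k+\nu-1$ and in the initial data $d_1^{(0)}=d_2^{(0)}=1$, so running the same induction yields the shorter supports $\{2\nu+1\}$ for $\nu$ odd and $\{2\nu+1,2\nu+2\}$ for $\nu$ even.
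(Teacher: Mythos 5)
Your proposal is correct and takes essentially the same approach as the paper: the paper's entire proof consists of the observation that $h$ odd forces $a=1$, after which it declares the remaining induction on $k$ ``similar to the proof of the preceding lemma'' and omits it. Your justification that $a=1$ (no involutions in a group of odd order) and your induction on $k$ tracking the supports $\{2\mu,2\mu+1\}$ for $\mu$ odd and $\{2\mu,2\mu+2,2\mu+3\}$ for $\mu$ even (with the parity of the surviving coefficients $k$, $k+\mu$, $k+\nu-1$ governing the middle term) is precisely the argument the paper leaves out.
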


\begin{proof}
The assumption $h\equiv1\ (\mathrm{mod}\ 2)$ implies that
$a=1$. The rest of the 
proof is similar to the proof of the preceding lemma,
and is omitted.
\end{proof}

Multiplying (\ref{Eq:FRelII}) for $k=0$ by $z^2$, substituting
(\ref{Eq:FRelI}) with $k=0$, and computing $F_{q-1}(z)$ by means of
Lemma~\ref{Lem:FkRed}, we find that 
\begin{multline}
\label{Eq:LSE0}
(d_{q-1}^{(0)}z^q-1) F_1(z)\,+\,\sum_{\nu=1}^{\frac{q-3}{2}}
d_{q-1}^{(\nu)} h^\nu z^{q+\nu} F_1^{(\nu)}(z) =
-(az+bz^2+c_{q-1}^{(0)}z^{q+1})f(z)\\[2mm] 
-\,\sum_{\mu=1}^{\frac{q-1}{2}} c_{q-1}^{(\mu)} h^\mu z^{q+\mu+1}
f^{(\mu)}(z) 
\end{multline}
with $f(z):=F_0(z)$. Next, taking $k=1$ in (\ref{Eq:FRelII}) and
computing $F_q(z)$ by means of Lemma~\ref{Lem:FkRed}, we obtain the
equation 
\begin{multline}
\label{Eq:LSE1}
(d_q^{(0)}z^{q-1}+b)F_1(z)\,+\,\sum_{\nu=1}^{\frac{q-1}{2}}
d_q^{(\nu)} h^\nu z^{q+\nu-1} F_1^{(\nu)}(z) = -c_q^{(0)}z^qf(z) -
h(c_q^{(1)}z^{q+1}-z)f'(z)\\[2mm] - \sum_{\mu=2}^{\frac{q-1}{2}}
c_q^{(\mu)} h^\mu z^{q+\mu} f^{(\mu)}(z). 
\end{multline}
Similarly, combining Equation~(\ref{Eq:FRelII}) for $k=2,3,\ldots,q-2$
with Lemma~\ref{Lem:FkRed}, we obtain further equations, which
together with (\ref{Eq:LSE0}) and (\ref{Eq:LSE1}) form a system of
$q-1$ linear equations over the field $\Q((z))$ in the variables
$F_1^{(0)}(z)$, $F_1^{(1)}(z)$, \ldots, $F_1^{(q-2)}(z)$. More
specifically, we find that 
\begin{equation}
\label{Eq:LSE}
\bar{\Delta}_q \begin{pmatrix}
         F_1^{(0)}(z)\\
         \vdots\\
         F_1^{(q-2)}(z)
         \end{pmatrix} = \begin{pmatrix}
                           b_0(z)\\
                           \vdots\\
                           b_{q-2}(z)
                           \end{pmatrix},
\end{equation}
where
$\bar{\Delta}_q=(h^\lambda\omega_{\kappa,\lambda})_{0\leq\kappa,\lambda\leq
q-2}$, 
with $\omega_{\kappa,\lambda}$ and $b_{\kappa}(z)$ given explicitly
in terms of $b,h$, the $c^{(\nu)}_k$'s, the $d^{(\nu)}_k$'s, 
and the derivatives of $f(z)$, as described in Section~\ref{app:db}
in the appendix.

\subsection{The determinant of the system, and its minors}
\label{sec:5.2}
Denote by ${}_0\bar{\Delta}_q$ the matrix obtained from
$\bar{\Delta}_q$ by replacing the $0$-th column with the right-hand
side of (\ref{Eq:LSE}), and by ${}_0\Delta_q$ the corresponding matrix
obtained from $\Delta_q:=(\omega_{\kappa,\lambda})$ via the same
operation. Then
$\det\bar{\Delta}_q=h^{\binom{q-1}{2}}\det\Delta_q$,
$\det{}_0\bar{\Delta}_q=h^{\binom{q-1}{2}}\det{}_0\Delta_q$,
and, according to Cramer's Rule and Laplace's expansion theorem, 
\[
F_1(z) = \frac{\det{}_0\Delta_q}{\det\Delta_q} =
(\det\Delta_q)^{-1}\sum_{\kappa=0}^{q-2}(-1)^\kappa
(\det\Delta_{\kappa,0}) b_\kappa(z), 
\] 
where $\Delta_{\kappa,0}$ is the matrix obtained from $\Delta_q$ by
deleting the $\kappa$-th row and $0$-th column. We are assuming here,
of course, that $\det\Delta_q\neq0$, a fact which follows, among
other things, from our next two auxiliary results, 
whose proofs will be given in Sections~\ref{Subsec:Lem21} and 
\ref{Subsec:Lem22} in the appendix, respectively. 

\begin{lemma}
\label{Lem:DetMod2}
For $h\equiv0\ (\mathrm{mod}\ 2),$ we have
\begin{enumerate}
\item[(i)] $\det\Delta_q \equiv \det\Delta_{0,0} \equiv
z^{\frac{3q^2-11q+12}{2}}\ \mathrm{mod}\ 2,$
\vskip2mm

\item[(ii)] $\det\Delta_{\kappa,0} \equiv
0\ \mathrm{mod}\ 2,\quad1\leq\kappa\leq q-2.$ 
\end{enumerate}
\end{lemma}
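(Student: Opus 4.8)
The plan is to collapse the matrix $\Delta_q=(\omega_{\kappa,\lambda})$ modulo $2$ to an almost trivial shape, using the hypothesis $h\equiv0$, and then read off all three assertions from that shape. First I would record the scalar reductions: since $h$ is even, Frobenius' theorem gives $a=\vert\Hom(C_2,H)\vert\equiv0$, while $b=\vert\Hom(C_q,H)\vert\equiv1$ by \eqref{eq:Cqung}; and Lemma~\ref{Lem:CoeffMod2} tells me that modulo $2$ the only surviving structure coefficients are $c_k^{(\mu)}\equiv\delta_{2\mu,k}$ and $d_k^{(\nu)}\equiv\delta_{2\nu+1,k}$.

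The heart of the matter is the mod-$2$ evaluation of $\omega_{k,\lambda}$. For a row $\kappa=k$ with $k\ge1$, I would take \eqref{Eq:FRelII} with that value of $k$, substitute the expansions \eqref{Eq:FkRed} of $F_{k-1},F_k,F_{k+q-1}$, and extract the coefficient of $F_1^{(\lambda)}$, which by definition equals $h^{\lambda}\omega_{k,\lambda}$. The delicate bookkeeping point is that the factored-out power $h^{\lambda}$ exactly cancels the $h^{\nu}$ carried by the $F_1^{(\nu)}$-block of \eqref{Eq:FkRed} when $\nu=\lambda$, so those terms survive mod $2$; by contrast, the full contribution of $h(k-1)F_{k-1}$ and the ``$z$-lowering'' part of $hzF_{k-1}'$ retain a surplus factor $h$ and vanish. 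After dividing by $h^{\lambda}$ one is left, using $b\equiv1$, with
\[
\omega_{k,\lambda}\equiv d_{k-1}^{(\lambda-1)}z^{k+\lambda-2}+d_{k}^{(\lambda)}z^{k+\lambda-1}+d_{k+q-1}^{(\lambda)}z^{k+q+\lambda-2}\pmod 2 .
\]
Feeding in $d_j^{(\mu)}\equiv\delta_{2\mu+1,j}$, each surviving term forces $k\in\{2\lambda,\,2\lambda+1,\,2\lambda+2-q\}$, and a short arithmetic check shows all three exponents collapse, giving
\[
\omega_{k,\lambda}\equiv\delta_{k,2\lambda}\,z^{3\lambda-2}+\delta_{k,2\lambda+1}\,z^{3\lambda}+\delta_{k,2\lambda+2-q}\,z^{3\lambda}\pmod 2,\qquad k\ge1 .
\]
The row $\kappa=0$ must be handled by hand, since it comes from \eqref{Eq:LSE0} where $F_{-1}$ was eliminated: there $\omega_{0,0}\equiv d_{q-1}^{(0)}z^{q}-1\equiv1$ and $\omega_{0,\lambda}\equiv d_{q-1}^{(\lambda)}z^{q+\lambda}\equiv0$ for $\lambda\ge1$, because $\delta_{2\lambda+1,q-1}=0$ for odd $q$. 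Hence row $0$ is $(1,0,\dots,0)$ mod $2$.

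Two of the assertions then fall out immediately. Expanding $\det\Delta_q$ along row $0$ gives $\det\Delta_q\equiv\omega_{0,0}\det\Delta_{0,0}\equiv\det\Delta_{0,0}$, which is the first congruence in (i). For (ii), deleting row $\kappa\ge1$ and column $0$ leaves row $0$ present, and its surviving entries in columns $1,\dots,q-2$ are all $\equiv0$, so the minor has a zero row and $\det\Delta_{\kappa,0}\equiv0\pmod2$.

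It remains to evaluate $\det\Delta_{0,0}\bmod2$, where $\Delta_{0,0}$ runs over rows and columns $1,\dots,q-2$, and here the plan is a forced-matching argument. I split the columns into Range~A ($1\le\lambda\le\frac{q-3}2$), where only rows $2\lambda$ (even, weight $z^{3\lambda-2}$) and $2\lambda+1$ (odd, weight $z^{3\lambda}$) occur, and Range~B ($\frac{q-1}2\le\lambda\le q-2$), where the sole nonzero entry lies in the odd row $2\lambda+2-q$ (weight $z^{3\lambda}$). As $\lambda$ sweeps Range~B the rows $2\lambda+2-q$ run through all odd rows $1,3,\dots,q-2$ exactly once, so in any nonvanishing term of the determinant the Range~B columns are pinned to those odd rows; this blocks the odd option in each Range~A column and forces it onto its even row $2\lambda$, the even rows $2,4,\dots,q-3$ again being used exactly once. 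Thus there is a unique non-zero permutation, so $\det\Delta_{0,0}$ reduces mod $2$ to a single monomial of exponent
\[
\sum_{\lambda=1}^{(q-3)/2}(3\lambda-2)+\sum_{\lambda=(q-1)/2}^{q-2}3\lambda=\frac{3q^2-11q+12}{2},
\]
the final equality being an elementary summation (cross-checked at $q=3$ and $q=5$, where it gives $3$ and $16$). The step I expect to be most error-prone is the mod-$2$ evaluation of $\omega_{k,\lambda}$: one must scrupulously distinguish the factor $h^{\lambda}$ that is pulled out (and cancels) from genuine surplus factors of $h$ (which annihilate their terms), and treat the exceptional row $\kappa=0$ separately because $F_{-1}$ lies outside the scope of Lemma~\ref{Lem:FkRed}. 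After the three-term formula is in hand, the determinant computation is purely combinatorial and essentially forced.
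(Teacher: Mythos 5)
Your proposal is correct and follows essentially the same route as the paper's proof: reduce the entries $\omega_{\kappa,\lambda}$ modulo $2$ (your three-term formula, after reindexing, reproduces exactly the table \eqref{Eq:DeltaMod2} that the paper obtains from Lemma~\ref{Lem:CoeffMod2} together with $a\equiv0$, $b\equiv1$), observe that the Leibniz expansion of the determinant admits a unique surviving permutation, and deduce part~(ii) and the congruence $\det\Delta_q\equiv\det\Delta_{0,0}$ from the vanishing of row $0$ beyond column $0$. The only cosmetic differences are that you re-derive the mod-$2$ entries directly from \eqref{Eq:FRelII} and \eqref{Eq:FkRed} instead of citing the explicit expressions for $\omega_{\kappa,\lambda}$, and that you run the forced-matching argument column-by-column where the paper argues row-by-row; the permutation singled out and the exponent sum are identical.
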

\begin{lemma}
\label{Lem:DetMod2hunger}
For $h\equiv1\ (\mathrm{mod}\ 2),$ we have
\begin{enumerate}
\item[(i)] 
$\displaystyle
\det\Delta_q \equiv\left\{\begin{matrix}
z^{\frac{3q^2-11q+12}{2}};\hfill&\text{if }q\equiv
1\ (\mathrm{mod}\ 4)\\[2mm]
z^{\frac{3q^2-11q+12}{2}}+z^{\frac{3q^2-8q+9}{2}};\hfill&\text{if }q\equiv
3\ (\mathrm{mod}\ 4)
\end{matrix}\right\}
\ \mathrm{mod}\ 2;$
\vskip2mm

\item[(ii)] 
$\det\Delta_{0,0} \equiv
z^{\frac{3q^2-11q+12}{2}}\ (\mathrm{mod}\ 2),$
$\det\Delta_{1,0} \equiv
0\ (\mathrm{mod}\ 2),$\newline
and for $2\le \kappa\le q-2$,
$$\det\Delta_{\kappa,0} \equiv
\left\{\begin{matrix}
0;\hfill&\text{if }\kappa\ge2\text{ and }q\equiv
1\ (\mathrm{mod}\ 4)\hfill\\[2mm]
0;\hfill&\text{if }\kappa\equiv1\ (\mathrm{mod}\ 4)\text{ and }q\equiv
3\ (\mathrm{mod}\ 4)\hfill\\[2mm]
z^{\frac{3q^2-8q+5-12\kappa'}{2}};\hfill&
\text{if }\kappa=4\kappa'+2\text{ and }q\equiv
3\ (\mathrm{mod}\ 4)\hfill\\[2mm]
z^{\frac{3q^2-8q+3-12\kappa'}{2}};\hfill&\text{if }\kappa=4\kappa'+3\text{
and }q\equiv
3\ (\mathrm{mod}\ 4)\hfill\\[2mm]
z^{\frac{3q^2-8q+1-12\kappa'}{2}};\hfill&\text{if }\kappa=4\kappa'+4\text{
and }q\equiv
3\ (\mathrm{mod}\ 4)\hfill\\[2mm]
\end{matrix}\right\}\ \mathrm{mod}\ 2.$$
\end{enumerate}
\end{lemma}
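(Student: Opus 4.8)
The plan is to follow the same strategy that underlies the companion Lemma~\ref{Lem:DetMod2} for the case $h\equiv0\pmod 2$, but to feed in the richer mod~$2$ reductions of Lemma~\ref{Lem:CoeffMod2hunger} in place of the simpler ones of Lemma~\ref{Lem:CoeffMod2}. First I would recall from Section~\ref{app:db} the explicit expressions for the entries $\omega_{\kappa,\lambda}$ of $\Delta_q$; these are polynomials in $z$ whose coefficients are the integers $c_k^{(\mu)}$, $d_k^{(\nu)}$ together with $a$, $b$ and $h$. Since $h\equiv1\pmod 2$ forces $a=1$ (as noted in the proof of Lemma~\ref{Lem:CoeffMod2hunger}, because $H$ of odd order has no involutions) and $b\equiv1\pmod 2$ by \eqref{eq:Cqung}, every entry $\omega_{\kappa,\lambda}$ reduces to a polynomial over $\mathbb{F}_2$ once the $c$'s and $d$'s are replaced by their values from \eqref{Eq:cmod2hunger} and \eqref{Eq:dmod2hunger}.

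Next I would exhibit the sparse structure of $\Delta_q\bmod 2$. In the even case each $c_k^{(\mu)}$, $d_k^{(\nu)}$ survives only for a single index relation, so each $\omega_{\kappa,\lambda}$ is essentially a monomial and $\det\Delta_q$ is governed by a single transversal, producing $z^{(3q^2-11q+12)/2}$. When $h\equiv1$, the Kronecker deltas in \eqref{Eq:cmod2hunger}--\eqref{Eq:dmod2hunger} carry one or two additional terms; in particular the three-term form $\delta_{2\mu,k}+\delta_{2\mu+2,k}+\delta_{2\mu+3,k}$ for even $\mu$ is responsible for the novel behaviour. The principal monomial of each surviving entry still reproduces exactly the transversal of the even case, so $z^{(3q^2-11q+12)/2}$ reappears as the leading contribution to $\det\Delta_q$ for every~$q$. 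The secondary deltas populate a neighbouring diagonal with a few extra nonzero entries, and I would argue that these complete a full transversal (hence contribute) only when $q\equiv3\pmod 4$, producing the extra summand $z^{(3q^2-8q+9)/2}$, whose exponent exceeds the principal one by exactly $3(q-1)/2$. The parity of $(q-1)/2$ is what decides whether the secondary entries line up, which is the source of the $q\bmod 4$ dichotomy in part~(i).

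For the cofactors $\det\Delta_{\kappa,0}$ in part~(ii) I would delete the $\kappa$-th row and $0$-th column and re-examine the resulting $(q-2)\times(q-2)$ matrix mod~$2$. Deleting column~$0$ removes the entries of lowest $z$-degree, so the surviving transversals shift along the staircase by an amount governed by $\kappa$; this accounts for the linear dependence of the claimed exponents on $\kappa'$ (a decrease of $12$ in the exponent as $\kappa$ increases by $4$, visible across the three surviving residue classes). Tracking which residue of $\kappa$ modulo~$4$ permits the remaining secondary entries, available only when $q\equiv3\pmod 4$, to complete a permutation yields the five-way case distinction; the vanishing for $\kappa\equiv1\pmod 4$, and for all $\kappa\ge2$ when $q\equiv1\pmod 4$, mirrors the single-transversal rigidity already recorded in Lemma~\ref{Lem:DetMod2}(ii). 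The boundary cases $\det\Delta_{0,0}$ and $\det\Delta_{1,0}$ I would check directly from the first two rows of the reduced matrix.

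The main obstacle will be the determinant bookkeeping. Because several reduced entries are genuine binomials over $\mathbb{F}_2$ rather than monomials, the Leibniz expansion admits more than one \emph{a priori} surviving permutation, and one must verify that all but the claimed contributions either cancel in pairs or fail the transversal condition. I would organise this by performing explicit row and column operations over $\mathbb{F}_2[z]$ that clear the secondary entries wherever possible, reducing $\Delta_q\bmod 2$ to a (near) block-triangular form whose diagonal blocks are small and can be evaluated by hand; the residues $q\bmod 4$ and, for the minors, $\kappa\bmod 4$ then enter only through the size and parity of these blocks. Since the computation is routine but lengthy, it is natural to place the full version in the appendix (Section~\ref{Subsec:Lem22}), in parallel with the treatment of Lemma~\ref{Lem:DetMod2}.
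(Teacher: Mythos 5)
Your plan is essentially the paper's own proof, which is likewise only a sketch (Section~\ref{Subsec:Lem22}): reduce the entries of $\Delta_q$ modulo~$2$ via the coefficient congruences for $h$ odd (with $a=1$ and $b$ odd), observe that the principal monomials single out the same unique transversal \eqref{eq:perm} as in the case $h$ even, show that the extra entries can complete further nonvanishing permutations only when $q\equiv3\ (\mathrm{mod}\ 4)$, and settle the bookkeeping by reduction to small blocks --- the paper does this by expanding along row~$0$ (whose second nonzero entry, in column $(q-3)/2$, exists precisely when $q\equiv3\ (\mathrm{mod}\ 4)$) and factoring the resulting minor into powers of $z$ and $2\times2$ blocks whose cross terms cancel modulo~$2$, which is the same device as your block-triangular reduction.

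One correction, so that it does not propagate when you actually carry out the computation: the matrix $\Delta_q$ contains no $c$-coefficients at all. As the explicit formulae in Section~\ref{app:db} show, the entries $\omega_{\kappa,\lambda}$ are built exclusively from the $d_k^{(\nu)}$ (together with $b$ and $h$); the $c_k^{(\mu)}$ occur only in the right-hand sides $b_\kappa(z)$. Hence the ``novel behaviour'' for $h$ odd is driven not by the three-term congruence for $c_k^{(\mu)}$ in \eqref{Eq:cmod2hunger}, but by the two-term congruence $d_k^{(\nu)}\equiv\delta_{2\nu+1,k}+\delta_{2\nu+2,k}$ for even $\nu$ in \eqref{Eq:dmod2hunger}: it is the secondary delta $\delta_{2\nu+2,k}$ that produces both the binomial entries of $\Delta_q$ modulo~$2$ and the decisive extra entry $\omega_{0,(q-3)/2}\equiv z^{(3q-3)/2}$, which survives exactly when $(q-3)/2$ is even, i.e.\ when $q\equiv3\ (\mathrm{mod}\ 4)$. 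A second, smaller slip: in part~(ii) the exponent drops by $6$, not $12$, each time $\kappa$ increases by $4$, since the term $12\kappa'$ sits in a numerator that is divided by $2$. Neither slip invalidates your approach, because writing out the entries correctly is its first step; but as stated, the mechanism you name is not the one that operates.
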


\subsection{A functional equation for $\mathcal S_{\mathfrak H(q)}^H(z)$}
\label{sec:5.3}

The following auxiliary result, 
whose proof is recorded in Section~\ref{Subsec:Lem23} in the appendix, 
is concerned with computing
the expressions 
\begin{equation}
\label{Eq:GenLogDer}
h^\nu \Big(\frac{d^\nu}{dz^\nu}
\mathcal{H}_\Gamma^H(z)\Big)\Big/\mathcal{H}_\Gamma^H(z),\quad\nu\in\N_0
\end{equation}
over $\Z$ and modulo~$2$ in terms of the series $\mathcal{S}_\Gamma^H(z)$.
(The reader should recall \eqref{Eq:WreathTransform}.)
\begin{lemma}
\label{Lem:GenLogDerComp}
Let $\Gamma$ be a finitely generated group, $H$ a finite group,
and let $h=\vert H\vert$. 
\begin{enumerate}
\item For each $\nu\in\N_0,$ we have
\begin{multline}
\label{Eq:HigherDerHComp}
h^\nu \Big(\frac{d^\nu}{dz^\nu}
\mathcal{H}_\Gamma^H(z)\Big)\Big/\mathcal{H}_\Gamma^H(z) \\=
\underset{{\pi_1+2\pi_2+\dots+\nu \pi_\nu=\nu}}
{\sum_{\pi_1,\dots,\pi_\nu\ge0}} h^{\nu-\pi_1-\dots-\pi_\nu}
\,\frac{\nu!}{\prod_{j\geq1}(j!)^{\pi_j} \pi_j!}
\,\prod_{j\geq1}\Big(\big(\mathcal{S}_\Gamma^H(z)\big)^{(j-1)}\Big)^{\pi_j},
\end{multline}
and the coefficients ${\nu!}\big/\big({\prod_{j\geq1}(j!)^{\pi_j} \pi_j!}\big)$
are integers.
\vskip2mm

\item For each $\nu\in\N_0,$ the series {\em (\ref{Eq:GenLogDer})} is
an integral power series in $z,$ and satisfies the congruence 
\begin{equation}
\label{Eq:HigherDerHCompMod2}
h^\nu \Big(\frac{d^\nu}{dz^\nu}
\mathcal{H}_\Gamma^H(z)\Big)\Big/\mathcal{H}_\Gamma^H(z) \equiv
\sum_{\mu=0}^{\lfloor\frac{\nu}{2}\rfloor} h^\mu \binom{\nu}{2\mu}
\Big(\big(\mathcal{S}_\Gamma^H(z)\big)'\Big)^\mu
\big(\mathcal{S}_\Gamma^H(z)\big)^{\nu-2\mu}\bmod{2}. 
\end{equation}
In particular, if $h\equiv0\ (\mathrm{mod}\ 2),$ then
\begin{equation}
\label{Eq:HigherDerHCompMod2Heven}
h^\nu \Big(\frac{d^\nu}{dz^\nu}
\mathcal{H}_\Gamma^H(z)\Big)\Big/\mathcal{H}_\Gamma^H(z) \equiv
\big(\mathcal{S}_\Gamma^H(z)\big)^\nu\ \mathrm{mod}\ 2,\quad\nu\geq0. 
\end{equation}
\end{enumerate}
\end{lemma}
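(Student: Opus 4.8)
The plan is to treat part~(i) as an instance of the Fa\`a di Bruno formula and then to obtain part~(ii) by a purely $2$-adic inspection of the resulting expansion.

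For part~(i), I would write $\mathcal{H}_\Gamma^H(z)=\exp\big(G(z)\big)$ with $G(z):=\frac1h\int\mathcal{S}_\Gamma^H(z)\,dz$, as permitted by \eqref{Eq:WreathTransform}. Applying the Fa\`a di Bruno formula \eqref{Eq:Bell} to the outer function $\exp$ expresses $\frac{1}{\mathcal{H}_\Gamma^H}\frac{d^\nu}{dz^\nu}\mathcal{H}_\Gamma^H$ as the complete Bell polynomial
\[
\underset{\pi_1+2\pi_2+\dots+\nu\pi_\nu=\nu}{\sum_{\pi_1,\dots,\pi_\nu\ge0}}
\frac{\nu!}{\prod_{j\ge1}(j!)^{\pi_j}\pi_j!}\,\prod_{j\ge1}\big(G^{(j)}(z)\big)^{\pi_j}
\]
evaluated at the derivatives of $G$. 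Since $G'=\mathcal{S}_\Gamma^H/h$, one has $G^{(j)}=\frac1h\big(\mathcal{S}_\Gamma^H\big)^{(j-1)}$ for every $j\ge1$; substituting this, collecting the factor $h^{-(\pi_1+\dots+\pi_\nu)}$, and multiplying through by $h^\nu$ yields \eqref{Eq:HigherDerHComp}, the exponent $\nu-\pi_1-\dots-\pi_\nu$ being non-negative because $\pi_1+\dots+\pi_\nu\le\pi_1+2\pi_2+\dots+\nu\pi_\nu=\nu$. The coefficient ${\nu!}\big/\big(\prod_{j}(j!)^{\pi_j}\pi_j!\big)$ counts the set partitions of $[\nu]$ with exactly $\pi_j$ blocks of size $j$ for each $j$, hence is a non-negative integer; combined with the fact that $\mathcal{S}_\Gamma^H$ has non-negative integer coefficients and that differentiation preserves integrality of power series, this also establishes the integrality assertion of part~(ii).

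For part~(ii) the decisive observation is that $\big(\mathcal{S}_\Gamma^H\big)^{(j)}\equiv0\pmod{2}$ for every $j\ge2$. Indeed, if $\mathcal{S}_\Gamma^H(z)=\sum_{n\ge0}a_nz^n$ with $a_n\in\Z$, then the coefficient of $z^n$ in the $j$-th derivative equals $(n+1)(n+2)\cdots(n+j)\,a_{n+j}$, a product of $j$ consecutive integers and therefore divisible by $j!$, which is even once $j\ge2$. Consequently, in \eqref{Eq:HigherDerHComp} every monomial $\prod_{j\ge1}\big((\mathcal{S}_\Gamma^H)^{(j-1)}\big)^{\pi_j}$ in which some $\pi_j>0$ with $j\ge3$ is $\equiv0\pmod{2}$, so only the partition types with $\pi_j=0$ for all $j\ge3$ survive. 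Writing $\mu:=\pi_2$ and $\pi_1=\nu-2\mu$ for these, one has $\pi_1+\dots+\pi_\nu=\nu-\mu$, so the surviving coefficient is
\[
\frac{\nu!}{(\nu-2\mu)!\,2^\mu\,\mu!}\,h^{\mu}
=\binom{\nu}{2\mu}\,\frac{(2\mu)!}{2^\mu\mu!}\,h^\mu
=\binom{\nu}{2\mu}\,(2\mu-1)!!\,h^\mu\equiv\binom{\nu}{2\mu}\,h^\mu\pmod{2},
\]
since the double factorial $(2\mu-1)!!$ of an odd number is odd. As the $\mu$ parts of size $2$ contribute $\big((\mathcal{S}_\Gamma^H)'\big)^\mu$ and the $\pi_1$ parts of size $1$ contribute $(\mathcal{S}_\Gamma^H)^{\nu-2\mu}$, summing over $0\le\mu\le\fl{\nu/2}$ gives exactly \eqref{Eq:HigherDerHCompMod2}. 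Finally, when $h\equiv0\pmod{2}$ the factor $h^\mu$ annihilates every term with $\mu\ge1$, leaving only $\binom{\nu}{0}(\mathcal{S}_\Gamma^H)^\nu=(\mathcal{S}_\Gamma^H)^\nu$, which is \eqref{Eq:HigherDerHCompMod2Heven}.

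The computation is essentially bookkeeping once the two elementary number-theoretic facts are in place; the step demanding genuine care is the first observation in part~(ii), that all derivatives of $\mathcal{S}_\Gamma^H$ of order $\ge2$ vanish modulo~$2$, since it is precisely this that kills the higher Bell-polynomial terms and collapses the full Fa\`a di Bruno expansion to the one-parameter family indexed by $\mu$.
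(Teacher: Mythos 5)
Your proposal is correct and follows essentially the same route as the paper's proof: Fa\`a di Bruno applied to $\exp$ composed with $h^{-1}\int\mathcal{S}_\Gamma^H(z)\,dz$, followed by the two mod-$2$ reductions (vanishing of $\big(\mathcal{S}_\Gamma^H\big)^{(j)}$ for $j\ge2$ and the congruence $\nu!/\big((\nu-2\mu)!\,2^\mu\mu!\big)\equiv\binom{\nu}{2\mu}$). Your write-up even supplies the short divisibility argument for $\big(\mathcal{S}_\Gamma^H\big)^{(j)}\equiv0\pmod 2$, $j\ge2$, which the paper merely asserts.
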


Write
\[
\big((\det\Delta_q)^{-1}\big)^{(\nu)} =
\frac{\Theta_\nu(z)}{(\det\Delta_q)^{\nu+1}},\quad\nu\geq0, 
\]
where $\{\Theta_\nu(z)\}_{\nu\geq0}$ is the family of polynomials in
$z$ determined recursively via 
\begin{align*}
\Theta_{\nu+1}(z) &= (\det\Delta_q) \Theta_\nu'(z) - (\nu+1)
(\det\Delta_q)'
\Theta_\nu(z),\quad\nu\geq0\\[2mm] 
\Theta_0(z) &= 1.
\end{align*}
Then, by Leibniz's formula for the derivatives of a product function, we have
\begin{equation}
\label{Eq:F1Ders}
F_1^{(\nu)}(z) = \sum_{\lambda=0}^\nu \binom{\nu}{\lambda}\,
\frac{\Theta_{\nu-\lambda}(z)}{(\det\Delta_q)^{\nu-\lambda+1}}
\,(\det{}_0\Delta_q)^{(\lambda)},\quad\nu\geq0. 
\end{equation}
Inserting (\ref{Eq:F1Ders}) into (\ref{Eq:LSE1}), multiplying
throughout by $(\det\Delta_q)^{{\frac{q+1}{2}}}$, and dividing
by the series $f(z)$, we obtain the relation 
\begin{multline}
\label{Eq:PrincEq}
c_q^{(0)} z^q (\det\Delta_q)^{{\frac{q+1}{2}}} +
z(c_q^{(1)}z^q-1)(\det\Delta_q)^{{\frac{q+1}{2}}}hf'(z)/f(z) \\
+
\sum_{\mu=2}^{\frac{q-1}{2}} c_q^{(\mu)} z^{q+\mu}
(\det\Delta_q)^{{\frac{q+1}{2}}} h^\mu f^{(\mu)}(z)/f(z)\\
+
(d_q^{(0)}z^{q-1}+b)(\det\Delta_q)^{{\frac{q-1}{2}}}
(\det{}_0\Delta_q)/f(z)\\
+ \sum_{\nu=1}^{\frac{q-1}{2}} \sum_{\lambda=0}^\nu
\binom{\nu}{\lambda} d_q^{(\nu)} z^{q+\nu-1}
\Theta_{\nu-\lambda}(z)(\det\Delta_q)^{{\frac{q+1}{2}}-(\nu-\lambda+1)}
h^\nu (\det{}_0\Delta_q)^{(\lambda)}/f(z) = 0, 
\end{multline}
where the reader should recall that $f(z)=F_0(z)=\mathcal
H_{\mathfrak H(q)}^H(z)$.
Two kinds of results may be derived from the last identity: first, an
integral recurrence relation for the function
$s_{\mathfrak{H}(q)}^H(n)$ for each given $q$ in terms of the
parameters $h=\vert H\vert$, $a=\vert\Hom(C_2,H)\vert$, and
$b=\vert\Hom(C_q,H)\vert$; second, a description of the parity
behaviour of $s_{\mathfrak{H}(q)}^H(n)$ for all $q$. 
We illustrate the first type of result below with the case of the
modular group $\mathfrak{H}(3)$. As concerns the second kind of result, 
we note that $s_{\mathfrak{H}(q)}^H(n) \equiv s_q(n)\ (\mathrm{mod}\ 2)$ 
for $\vert H\vert$ odd. Since the parity behaviour of $s_q(n)$ is known 
(see Theorem~\ref{Thm:sqn}), we shall concentrate here on the case where 
$\vert H\vert$ is even, which yields the parity of the numbers $N_q(n)$.

\subsection{The case of $\mathfrak H(3)$}
\label{sec:5.4}

For $q=3$, Eq.~(\ref{Eq:PrincEq}) reduces to 
\begin{multline}
\label{Eq:PrincEQq=3}
c_3^{(0)} z^3 (\det\Delta_3)^2 +
z(c_3^{(1)}z^3-1)(\det\Delta_3)^2 h
f'(z)/f(z)\\
+\big\{(d_3^{(0)}z^2+b)(\det\Delta_3)+d_3^{(1)}h
z^3\Theta_1(z)\big\}(\det{}_0\Delta_3)/f(z)\\[2mm] +
d_3^{(1)}z^3(\det\Delta_3) h(\det{}_0\Delta_3)'/f(z) = 0. 
\end{multline}
We have
\begin{align*}
c_3^{(0)} &= ac_2^{(0)}+2hc_1^{(0)} = ah,\\[2mm]
c_3^{(1)} &= ac_2^{(1)}+c_1^{(0)} = a,\\[2mm]
d_3^{(0)} &= ad_2^{(0)}+hd_1^{(0)} = a^2+h,\\[2mm]
d_3^{(1)} &= d_1^{(0)} = 1,\\[2mm]
\det\Delta_3 &= \det\begin{pmatrix}
                         az^3-1 & 0\\
                         (a^2+h)z^2+b & z^3
                         \end{pmatrix} = z^3(az^3-1),\\[2mm]
\Theta_1(z) &= -(\det\Delta_3)' = -3z^2(2az^3-1), 
\end{align*}
and
\begin{align*}
\det{}_0\Delta_3& = \det\begin{pmatrix}
                            -z(a+bz+hz^3)f(z)-hz^5f'(z) & 0\\
                            -ahz^3f(z)-hz(az^3-1)f'(z) & z^3
                            \end{pmatrix}\\
& = -z^4(a+bz+hz^3)f(z)-hz^8f'(z). 
\end{align*}
Inserting these values into (\ref{Eq:PrincEQq=3}), dividing both sides
by $z^7$, and collecting terms, we find that 
\begin{multline}
\label{Eq:ModularHomDiffEq}
(z^7-az^{10}) h^2f''(z)/f(z) + \big(-1+4az^3+2bz^4+(7h-3a^2)z^6 -
2abz^7-4ahz^9\big) h f'(z)/f(z)\\[2mm] +
ab+b^2z+a(a^2+3h)z^2+4bhz^3-ab^2z^4+(5h^2-a^4)z^5-ab(a^2+h)z^6-2ah^2z^8
= 0. 
\end{multline}
Substituting
\[
S_3^H(z) = hf'(z)/f(z)
\]
and 
\[
h(S_3^H(z))' + (S_3^H(z))^2 = h^2f''(z)/f(z)
\]
in (\ref{Eq:ModularHomDiffEq}) according to
Lemma~\ref{Lem:GenLogDerComp}(i), we find the Riccati-type
differential equation 
\begin{multline}
\label{Eq:ModularGenSubDiffEq}
\big(z^7-az^{10}\big)\big(h(\mathcal{S}_3^H(z))'+(\mathcal{S}_3^H(z))^2\big)
\\[2mm]
+ \big(-1+4az^3+2bz^4+(7h-3a^2)z^6 - 2abz^7-4ahz^9\big)
\mathcal{S}_3^H(z)\\[2mm] +
ab+b^2z+a(a^2+3h)z^2+4bhz^3-ab^2z^4+(5h^2-a^4)z^5-ab(a^2+h)z^6-2ah^2z^8
= 0 
\end{multline}
for the generating function
$\mathcal{S}_3^H(z):=\mathcal{S}_{\mathfrak{H}(3)}^H(z)$. Comparing
coefficients in (\ref{Eq:ModularGenSubDiffEq}), we finally obtain the
following result, which generalizes Theorem~1 in \cite{GIR}. 
\begin{proposition}
\label{Prop:ModularHRec}
The function $s_3^H(n):=s_{\mathfrak{H}(3)}^H(n)$ satisfies the
recurrence relation 
\begin{multline}
\label{Eq:s3HRec}
s_3^H(n) = 4a s_3^H(n-3)\,+\,2b
s_3^H(n-4)\,+\,(hn-3a^2)s_3^H(n-6)\,-\,2ab s_3^H(n-7)\\[2mm]
-\,ah(n-6)s_3^H(n-9)\,+\,\sum_{\mu=1}^{n-7} s_3^H(\mu)
s_3^H(n-\mu-6)\,-\,a\sum_{\mu=1}^{n-10} s_3^H(\mu)
s_3^H(n-\mu-9),\quad n\geq10, 
\end{multline}
with initial values
\begin{align*}
s_3^H(1) &= ab,\\
s_3^H(2) &= b^2,\\
s_3^H(3) &= a(a^2+3h),\\
s_3^H(4) &= 4b(a^2+h),\\
s_3^H(5) &= 5ab^2,\\
s_3^H(6) &= 3a^4+2b^3+5h^2+12a^2h,\\
s_3^H(7) &= 14ab(a^2+2h),\\
s_3^H(8) &= 8b^2(3a^2+2h),\\
s_3^H(9) &= 3a(3a^4+6b^3+15h^2+16a^2h).
\end{align*}
\end{proposition}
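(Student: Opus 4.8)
The plan is to read off both the recurrence \eqref{Eq:s3HRec} and the nine initial values directly from the Riccati-type differential equation \eqref{Eq:ModularGenSubDiffEq} by comparing coefficients of powers of $z$. First I would record the elementary power-series data for $\mathcal S:=\mathcal S_{3}^H(z)=\sum_{n\ge0}s_3^H(n+1)z^n$, namely
\[
[z^k]\mathcal S=s_3^H(k+1),\qquad [z^k]\mathcal S'=(k+1)s_3^H(k+2),\qquad [z^k]\mathcal S^2=\sum_{i=0}^{k}s_3^H(i+1)\,s_3^H(k-i+1),
\]
so that, upon inserting these into \eqref{Eq:ModularGenSubDiffEq}, every one of the three groups of terms (the factor $(z^7-az^{10})$ times $h\mathcal S'$ and times $\mathcal S^2$, the linear term in $\mathcal S$, and the explicit inhomogeneous polynomial) becomes a power series whose coefficients are polynomial, respectively convolution, expressions in the numbers $s_3^H(j)$.

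To obtain the recurrence I would extract the coefficient of $z^{n-1}$ from \eqref{Eq:ModularGenSubDiffEq}, using that the left-hand side vanishes identically. The inhomogeneous polynomial has degree $8$, so it contributes nothing as soon as $n-1\ge 9$, that is for $n\ge 10$; this is exactly why the recurrence is asserted only in that range and the small cases are listed separately. The monomial $-1\cdot\mathcal S$ of the linear term supplies $-s_3^H(n)$, which I would move to the left-hand side and solve for. The remaining monomials $4az^3\mathcal S$, $2bz^4\mathcal S$, $-2abz^7\mathcal S$ of the linear term give the plain terms $4a\,s_3^H(n-3)$, $2b\,s_3^H(n-4)$, $-2ab\,s_3^H(n-7)$, while $z^7\mathcal S^2$ and $-az^{10}\mathcal S^2$ produce precisely the two convolution sums, with upper limits $n-7$ and $n-10$ and arguments $s_3^H(n-6-\mu)$ and $s_3^H(n-9-\mu)$ after the index shift $\mu=i+1$. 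The two genuinely $n$-dependent coefficients arise by fusing the contribution of $\mathcal S$ with that of $\mathcal S'$: the pieces $(7h-3a^2)z^6\mathcal S$ and $z^7h\mathcal S'$ combine as $(7h-3a^2)+h(n-7)=hn-3a^2$ multiplying $s_3^H(n-6)$, and $-4ahz^9\mathcal S$ together with $-az^{10}h\mathcal S'$ combine as $-4ah-ah(n-10)=-ah(n-6)$ multiplying $s_3^H(n-9)$. Collecting everything reproduces \eqref{Eq:s3HRec}.

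For the initial values $s_3^H(1),\dots,s_3^H(9)$ I would instead extract the coefficients of $z^{0},z^{1},\dots,z^{8}$ in turn. For $z^{j}$ with $j\le 6$ the factor $z^7$ kills both the derivative term and the quadratic term, so only the linear term $-\mathcal S$ and the inhomogeneous polynomial survive; each such equation therefore determines one $s_3^H(j+1)$ from the corresponding explicit polynomial coefficient together with the already-known lower-order values (for instance $[z^0]$ gives $s_3^H(1)=ab$, and $[z^3]$ gives $-s_3^H(4)+4a\,s_3^H(1)+4bh=0$). From $z^{7}$ onward the terms $z^7\mathcal S^2$ and $z^7h\mathcal S'$ also enter, contributing convolutions and weighted values of the previously computed quantities; solving the equations in order of increasing $j$ yields the remaining listed values.

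There is no conceptual obstacle here: the whole statement is a mechanical consequence of \eqref{Eq:ModularGenSubDiffEq}. The only real care needed is in the bookkeeping, specifically tracking the index shifts so that the two convolution sums acquire exactly the stated upper limits and arguments, and correctly merging the $n$-linear output of the derivative term $h\mathcal S'$ with the constant output of the neighbouring $\mathcal S$ term to produce the coefficients $hn-3a^2$ and $-ah(n-6)$. I expect this routine but error-prone coefficient extraction, rather than any genuine difficulty, to be the main thing to get right.
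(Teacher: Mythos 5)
Your proposal is correct and follows exactly the paper's own route: the paper obtains Proposition~\ref{Prop:ModularHRec} precisely by comparing coefficients in the Riccati-type equation \eqref{Eq:ModularGenSubDiffEq}, which is the coefficient extraction you describe, including the key mergers $h(n-7)+(7h-3a^2)=hn-3a^2$ and $-ah(n-10)-4ah=-ah(n-6)$ and the index shifts giving the convolution limits $n-7$ and $n-10$. Spot-checking the coefficients of $z^0,\dots,z^8$ confirms that your scheme reproduces all nine listed initial values, so there is no gap.
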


\subsection{The parity behaviour of $N_q(n)$}
\label{sec:5.5}

We now use
Lemmas~\ref{Lem:CoeffMod2}, \ref{Lem:DetMod2}, and
\ref{Lem:GenLogDerComp}(ii) to simplify identity~(\ref{Eq:PrincEq})
under the assumption that $h=\vert H\vert\equiv0\ (\mathrm{mod}\ 2)$. 
Throughout, we set
$\mathcal{S}_{\mathfrak{H}(q)}^H(z)=:\mathcal{S}_q^H(z)$ and
$\alpha_0:=3q^2-11q+12$.

First, $c_q^{(0)}\equiv c_q^{(1)}\equiv0\ (\mathrm{mod}\ 2)$, since $q>2$; hence,
\[
c_q^{(0)}z^q(\det\Delta_q)^{{\frac{q+1}{2}}} \equiv 0\ \mathrm{mod}\ 2
\]
and
\[
z(c_q^{(1)}z^q-1)(\det\Delta_q)^{{\frac{q+1}{2}}} h f'(z)/f(z)
\equiv z^{{\frac{q+1}{4}}\alpha_0+1} \mathcal{S}_q^H(z)\ \mathrm{mod}\ 2. 
\]
Similarly, we have $c_q^{(\mu)}\equiv0\ (\mathrm{mod}\ 2)$ for $\mu<q/2$; thus
\[
\sum_{\mu=2}^{\frac{q-1}{2}} c_q^{(\mu)} z^{q+\mu}
(\det\Delta_q)^{{\frac{q+1}{2}}} h^\mu f^{(\mu)}(z)/f(z) \equiv
0\ \mathrm{mod}\ 2. 
\]
Next, inspection shows that, in view of
Lemma~\ref{Lem:GenLogDerComp}(ii), $b_\kappa(z)/f(z)$ is always an
integral power series; hence, $(\det{}_0\Delta_q)/f(z)$ is an
integral power series, and modulo~$2$ we have 
\begin{align*}
(\det{}_0\Delta_q)/f(z) &= \sum_{\kappa=0}^{q-2} (-1)^\kappa
(\det\Delta_{\kappa,0}) b_\kappa(z)/f(z)\\[2mm] 
&\equiv (\det\Delta_{0,0}) b_0(z)/f(z)\\[2mm]
&\equiv z^{\alpha_0/2}\big\{z^2 + z^{\frac{3q+1}{2}} h^{\frac{q-1}{2}}
f^{(\frac{q-1}{2})}(z)/f(z)\big\}\\[2mm] 
&\equiv z^{2+\alpha_0/2} + z^{\frac{\alpha_0+3q+1}{2}}
\big(\mathcal{S}_q^H(z)\big)^{\frac{q-1}{2}}, 
\end{align*}
and so, modulo~$2$,
\begin{align*}
(d_q^{(0)}z^{q-1}+b) (\det\Delta_q)^{{\frac{q-1}{2}}}
(\det{}_0\Delta_q)/f(z) &\equiv
(\det\Delta_q)^{{\frac{q-1}{2}}}
(\det{}_0\Delta_q)/f(z)\\[2mm] 
&\kern-10pt
\equiv
z^{\frac{\alpha_0}{2}({\frac{q+1}{2}}-1)}\Big\{z^{2+\alpha_0/2} +
z^{\frac{\alpha_0+3q+1}{2}}
\big(\mathcal{S}_q^H(z)\big)^{\frac{q-1}{2}}\Big\}\\[2mm] 
&\kern-10pt
= z^{2+{\frac{q+1}{4}}\alpha_0} +
z^{({\frac{q+1}{2}}\alpha_0+3q+1)/2}
\big(\mathcal{S}_q^H(z)\big)^{\frac{q-1}{2}}. 
\end{align*}
Here we have used Lemma~\ref{Lem:DetMod2} to evaluate
$\det\Delta_q$ and $\det\Delta_{\kappa,0}$ modulo~$2$, the
facts, following from Lemma~\ref{Lem:CoeffMod2}, that
$d_q^{(0)}\equiv0\ (\mathrm{mod}\ 2)$ and that 
\[
c_{q-1}^{(\mu)} \equiv
1\ (\mathrm{mod}\ 2)\
\text { if, and only if, }\
\mu=\frac{q-1}{2},\quad0\leq\mu\leq\frac{q-1}{2},
\]
plus part~(ii) of Lemma~\ref{Lem:GenLogDerComp} to rewrite the term
$h^{\frac{q-1}{2}} f^{(\frac{q-1}{2})}(z)/f(z)$. This leaves the
double sum $\Sigma$ on the right-hand side of
(\ref{Eq:PrincEq}). Clearly, for $0\leq\kappa\leq q-2$ and
$0\leq\eta\leq\nu$, the term $h^\nu b_\kappa^{(\eta)}(z)/f(z)$ is an
integral power series, and we have 
\[
h^\nu b_\kappa^{(\eta)}(z)/f(z) \equiv 0\ \mathrm{mod}\ 2,\quad \eta<\nu.
\]
From this observation it follows that, for $0\leq\lambda\leq\nu$, the
expression 
\[
h^\nu (\det{}_0\Delta_q)^{(\lambda)}/f(z) = \sum_{\kappa=0}^{q-2}
\sum_{\eta=0}^\lambda (-1)^\kappa\binom{\lambda}{\eta}
(\det\Delta_{\kappa,0})^{(\lambda-\eta)} h^\nu
b_\kappa^{(\eta)}(z)/f(z) 
\]
is an integral power series, and that
\begin{equation}
\label{Eq:SigmaEval1}
h^\nu (\det{}_0\Delta_q)^{(\lambda)}/f(z) \equiv\left\{ \begin{matrix}
z^{\alpha_0/2} h^\nu b_0^{(\nu)}(z)/f(z),\hfill&\lambda=\nu\hfill\\[2mm]
0,\hfill&\lambda<\nu\hfill
\end{matrix}\right\}\ \mathrm{mod}\ 2.
\end{equation}
From (\ref{Eq:SigmaEval1}) in turn, we conclude that each summand of
$\Sigma$ (and hence $\Sigma$ itself) is an integral power series, and
that 
\begin{equation}
\label{Eq:SigmaEval2}
\Sigma \equiv \sum_{\nu=1}^{\frac{q-1}{2}} d_q^{(\nu)} z^{q+\nu-1}
(\det\Delta_q)^{{\frac{q+1}{2}}-1} h^\nu
(\det{}_0\Delta_q)^{(\nu)}/f(z)\ \mathrm{mod}\ 2. 
\end{equation}
Next, using the fact that, by Lemma~\ref{Lem:CoeffMod2},
\[
d_q^{(\nu)} \equiv 1\ (\mathrm{mod}\ 2)\
\text { if, and only if, }\
\nu=\frac{q-1}{2},\quad0\leq\nu\leq\frac{q-1}{2}, 
\]
the right-hand side of (\ref{Eq:SigmaEval2}) in its turn is congruent
modulo~$2$ to 
\[
z^{\frac{3q-3}{2}} (\det\Delta_q)^{{\frac{q-1}{2}}}
h^{\frac{q-1}{2}} (\det{}_0\Delta_q)^{(\frac{q-1}{2})}/f(z), 
\]
which, in view of (\ref{Eq:SigmaEval1}) and
Lemmas~\ref{Lem:DetMod2}(i) and ~\ref{Lem:GenLogDerComp}(ii),
simplifies further to give 
\begin{align*}
\Sigma &\equiv z^{\frac{\alpha_0+3q-3}{2}}
(\det\Delta_q)^{{\frac{q-1}{2}}} h^{\frac{q-1}{2}}
b_0^{(\frac{q-1}{2})}(z)/f(z)\\[2mm] 
&\equiv z^{\frac{{\frac{q+1}{2}}\alpha_0+3q-3}{2}}\Big\{z^2
h^{\frac{q-1}{2}} f^{(\frac{q-1}{2})}(z)/f(z) + z^{\frac{3q+1}{2}}
h^{q-1} f^{(q-1)}(z)/f(z)\Big\}\\[2mm] 
&\equiv
z^{\frac{{\frac{q+1}{2}}\alpha_0+3q+1}{2}}
\big(\mathcal{S}_q^H(z)\big)^{\frac{q-1}{2}}
+ z^{\frac{{\frac{q+1}{2}}\alpha_0+6q-2}{2}}
\big(\mathcal{S}_q^H(z)\big)^{q-1}. 
\end{align*}
Putting all the pieces together and dividing by
$z^{{\frac{q+1}{4}}\alpha_0+1}$, we obtain the surprisingly elegant
congruence 
\begin{equation}
\label{Eq:Smod2}
z + \mathcal{S}_q^H(z) + z^{3q-2}\big(\mathcal{S}_q^H(z)\big)^{q-1}
\equiv 0\ \mathrm{mod}\ 2,\quad \vert H\vert\equiv0\ (\mathrm{mod}\ 2). 
\end{equation}
We are now in a position to establish our second main result.
\begin{theorem}
\label{Thm:SolvSmod2}
Let $q$ be an odd prime number. Then the number $N_q(n)$ of index $n$
subgroups in the Hecke group $\mathfrak{H}(q)$ isomorphic to a free
product of cyclic groups of order $q$ is odd if, and only if,
$n=2+4(q-1)\eta,$ where $\eta$ is a non-negative integer 
satisfying the condition that  
\begin{equation}
\label{Eq:etaCond}
  \mathfrak{s}_2((q-1)\eta+1) = \mathfrak{s}_2(\eta)
+ \mathfrak{s}_2((q-2)\eta+1). 
\end{equation}
Here, as before, 
$\mathfrak{s}_2(x)$ is the sum
of digits in the binary expansion of the positive integer $x$. 
\end{theorem}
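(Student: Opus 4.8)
The plan is to turn the congruence \eqref{Eq:Smod2} into an explicit, solvable functional equation over the field with two elements and then read off the parity of $N_q(n)$ from a Fuss--Catalan coefficient. By \eqref{eq:s=N} we have $s_q^H(n)\equiv N_q(n)\pmod 2$ whenever $\vert H\vert$ is even, so the reduction modulo $2$ of $\mathcal S_q^H(z)=\sum_{n\ge0}s_q^H(n+1)z^n$ is $\sum_{n\ge0}N_q(n+1)z^n$; write $S(z)$ for this series, regarded as an element of $\mathbb F_2[[z]]$. Since $-1\equiv1$, \eqref{Eq:Smod2} becomes the functional equation
\[
S(z)=z+z^{3q-2}S(z)^{q-1}.
\]
Comparing constant terms forces $[z^0]S=0$ (equivalently $N_q(1)=0$), so $S$ has a factor $z$.

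First I would substitute $S(z)=z\,T(z)$. A short computation turns the displayed equation into $T=1+z^{4(q-1)}T^{q-1}$, because $z^{3q-2}(zT)^{q-1}=z^{4q-3}T^{q-1}$ and division by $z$ leaves $z^{4(q-1)}T^{q-1}$. As the right-hand side involves $z$ only through $z^{4(q-1)}$, the unique power-series solution $T$ is itself a series in $w:=z^{4(q-1)}$; writing $T=\widetilde T(w)$, we obtain the clean algebraic equation $\widetilde T=1+w\,\widetilde T^{\,q-1}$ over $\mathbb F_2$.

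Next I would solve this last equation. The same equation over $\Z$ has a unique power-series solution whose reduction modulo $2$ solves the $\mathbb F_2$-version, so it suffices to compute the integer coefficients and reduce. Setting $R=\widetilde T-1$, Lagrange--B\"urmann inversion applied to $R=w(1+R)^{q-1}$ gives, for $\eta\ge1$,
\[
t_\eta:=[w^\eta]\widetilde T=\frac1\eta\binom{(q-1)\eta}{\eta-1}
=\frac{1}{(q-2)\eta+1}\binom{(q-1)\eta}{\eta}
=\frac{((q-1)\eta)!}{\eta!\,\big((q-2)\eta+1\big)!},
\]
a Fuss--Catalan number, with $t_0=1$. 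Undoing the substitutions, $S(z)=\sum_{\eta\ge0}t_\eta\,z^{1+4(q-1)\eta}$, so $[z^{n-1}]S=N_q(n)\bmod2$ vanishes unless $n=2+4(q-1)\eta$, in which case it equals $t_\eta\bmod2$. Thus $N_q(n)$ is odd precisely when $n=2+4(q-1)\eta$ and $t_\eta$ is odd.

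Finally I would determine when $t_\eta$ is odd via Legendre's formula $v_2(m!)=m-\mathfrak s_2(m)$. Since the integer parts $(q-1)\eta-\eta-((q-2)\eta+1)$ collapse to $-1$, this yields
\[
v_2(t_\eta)=-1+\mathfrak s_2(\eta)+\mathfrak s_2\big((q-2)\eta+1\big)-\mathfrak s_2\big((q-1)\eta\big).
\]
The decisive observation is that $q-1$ is even, so $(q-1)\eta$ is even and hence $\mathfrak s_2((q-1)\eta+1)=\mathfrak s_2((q-1)\eta)+1$; substituting gives $v_2(t_\eta)=\mathfrak s_2(\eta)+\mathfrak s_2((q-2)\eta+1)-\mathfrak s_2((q-1)\eta+1)$. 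Since $t_\eta$ is a positive integer this quantity is $\ge0$, and it equals $0$ exactly when \eqref{Eq:etaCond} holds, which is the claim (the case $\eta=0$, $n=2$, being checked directly: $t_0=1$ and $\mathfrak s_2(1)=\mathfrak s_2(0)+\mathfrak s_2(1)$). The only genuinely non-mechanical points are recognizing that the substitution $S=zT$ reveals $T$ as a series in $z^{4(q-1)}$ --- which collapses the problem to a single-variable Fuss--Catalan equation --- and the parity-shift identity for $\mathfrak s_2$ that is made available precisely by the evenness of $q-1$; everything else is routine Lagrange inversion and a digit-sum computation.
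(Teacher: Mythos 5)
Your proposal is correct and follows essentially the same route as the paper's proof: starting from the congruence \eqref{Eq:Smod2} and \eqref{eq:s=N}, substituting $\mathcal{S}=zT$ to obtain the functional equation $T=1+z^{4(q-1)}T^{q-1}$, recognizing $T$ as a series in $z^{4(q-1)}$, applying Lagrange inversion to get the Fuss--Catalan coefficients, and then extracting the $2$-adic condition. The only (cosmetic) difference is that you compute $v_2(t_\eta)$ via Legendre's formula applied to the factorial form $\frac{((q-1)\eta)!}{\eta!\,((q-2)\eta+1)!}$ together with the parity shift $\mathfrak{s}_2((q-1)\eta+1)=\mathfrak{s}_2((q-1)\eta)+1$, whereas the paper quotes Kummer's theorem for $v_2\bigl(\binom{(q-1)\eta+1}{\eta}\bigr)$ directly.
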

\begin{proof}
Recall that, by \eqref{eq:s=N}, the number $N_q(n)$ is the same
modulo~2 as the number $s_q^H(n)$ for a group $H$ of even order. 
On the other hand, Equation~\eqref{Eq:Smod2} is a functional equation
for the generating function of the $s_q^H(n)$'s, taken modulo~2.
We now use this functional equation to determine $s_q^H(n)$ modulo~$2$, and
thereby $N_q(n)$ modulo~$2$ as well.

Define an integral power series $\hat{\mathcal{S}}_q^H(z)$ by means of
the equation 
\[
\hat{\mathcal{S}}_q^H(z) = z + z^{3q-2}
\big(\hat{\mathcal{S}}_q^H(z)\big)^{q-1}. 
\]
Then, in view of (\ref{Eq:Smod2}), we have
\[
\hat{\mathcal{S}}_q^H(z) \equiv S_q^H(z)\ \mathrm{mod}\ 2.
\]
Setting
\[
\hat{\mathcal{T}}_q^H(z):= z^{-1} \hat{\mathcal{S}}_q^H(z) = 1 +
\sum_{n\geq1} \hat{t}_q^H(n) z^n, 
\]
we obtain an integral power series with functional equation
\begin{equation}
\label{Eq:TqHFuncEq}
\hat{\mathcal{T}}_q^H(z) = 1 +
z^{4(q-1)}\big(\hat{\mathcal{T}}_q^H(z)\big)^{q-1}. 
\end{equation}
From the functional equation it is obvious that $\hat T_q^H(z)$ must
be a power series in $z^{4(q-1)}$; that is, 
the coefficients $\hat{t}_q^H(n)$ must satisfy
\begin{equation}
\label{Eq:tqHnon-trivial}
n\not\equiv0\bmod{4(q-1)}\,\Longrightarrow\, \hat{t}_q^H(n)=0,\quad n\geq1.
\end{equation}
Hence, introducing a series $\hat{\mathcal{U}}_q^H(v)$ via
\[
\hat{\mathcal{U}}_q^H(v) = \sum_{\eta\geq1} \hat{t}_q^H(4(q-1)\eta) v^\eta,
\]
we have
\[
\hat{\mathcal{T}}_q^H(z) - 1 = \hat{\mathcal{U}}_q^H(z^{4(q-1)}),
\]
and, setting $v=z^{4(q-1)}$, we find from (\ref{Eq:TqHFuncEq}) that
$\hat{\mathcal{U}}_q^H(v)$ satisfies the functional equation 
\[
\frac {\hat{\mathcal{U}}_q^H(v)}
{(\hat{\mathcal{U}}_q^H(v)+1)^{q-1}} = v.
\]
Using the notation $\big\langle v^\eta\big\rangle f(v)$ for the
coefficient of $v^\eta$ in the power series $f(v)$,
Lagrange inversion 
(cf.\ \cite[Theorem~5.4.2]{StanleyI}) 
implies that, for $\eta\geq1$, we have
\begin{align*}
\big\langle v^\eta\big\rangle \hat{\mathcal{U}}_q^H(v) &=
\eta^{-1}\big\langle\zeta^{\eta-1}\big\rangle
(1+\zeta)^{(q-1)\eta}\\[2mm]
&= \eta^{-1}\binom{(q-1)\eta}{\eta-1}\\[2mm]
&= \frac{1}{(q-1)\eta+1} \binom{(q-1)\eta+1}{\eta}.
\end{align*}
Substituting back, $\hat{\mathcal{S}}_q^H(z)$ is found to be given
explicitly by 
\[
\hat{\mathcal{S}}_q^H(z) = \sum_{\eta\geq0}\, \frac{1}{(q-1)\eta+1}\,
\binom{(q-1)\eta+1}{\eta}\, z^{4(q-1)\eta+1}, 
\]
which in turn allows us to deduce that
\begin{equation}
\label{Eq:sqHmod2expl}
s_q^H(n) \equiv \left\{\begin{matrix}
                 0;\hfill& n\not\equiv 2\ (\mathrm{mod }\ 4q-4)\hfill\\[2mm]
                 \frac{1}{(q-1)\eta+1} \binom{(q-1)\eta+1}{\eta};\hfill&
n=2+4(q-1)\eta,\,\eta\geq0 \hfill
                 \end{matrix}\right\}\ \mathrm{mod}\ 2,\quad n\geq1.
\end{equation}
Since $q$ is an odd prime, and, hence, $(q-1)\eta+1$ is odd,
we have shown that, for $\vert H\vert$ even, $s_q^H(n)$ is odd if, and only if,
$n=2+4(q-1)\eta$ for some $\eta\geq0$ satisfying 
\[
v_2\left(\binom{(q-1)\eta+1}{\eta}\right)=0.
\]
Applying Kummer's formula
(cf.\ \cite[pp.~115--116]{Kummer})
\[
v_2\left(\binom{a}{b}\right) = \mathfrak{s}_2(b) + \mathfrak{s}_2(a-b) -
\mathfrak{s}_2(a)
\]
for the $2$-adic valuation of a binomial coefficient, we find that
\[
v_2\left(\binom{(q-1)\eta+1}{\eta}\right) = \mathfrak{s}_2(\eta) +
\mathfrak{s}_2((q-2)\eta+1) - \mathfrak{s}_2((q-1)\eta+1), 
\]
and our statement concerning $s_q^H(n)$ can be rewritten as
\begin{equation*}
s_q^H(n)\equiv1\ (\mathrm{mod}\ 2)\
\text { if, and only if, }\
n=2+4(q-1)\eta\mbox{ for
some }\eta\geq0\mbox{ satisfying }(\ref{Eq:etaCond}). 
\end{equation*}
In view of \eqref{eq:s=N}, this establishes the theorem.
\end{proof}
In the case where $q$ is a Fermat prime we can be more explicit. 
\begin{corollary}
\label{Cor:GenParHecke}
If $q$ is a Fermat prime, then $N_q(n)$ is odd if, and only if,
$n=\frac{4(q-1)^{\sigma+1} - 2q}{q-2}$ for some $\sigma\geq0$. 
\end{corollary}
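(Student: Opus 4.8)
The plan is to specialise the criterion of Theorem~\ref{Thm:SolvSmod2} using the defining feature of a Fermat prime, namely that $q-1=2^\lambda$ for some $\lambda\ge1$ (and hence $q-2=2^\lambda-1$). By Theorem~\ref{Thm:SolvSmod2}, $N_q(n)$ is odd precisely when $n=2+4(q-1)\eta$ for some $\eta\ge0$ satisfying
$$\mathfrak{s}_2((q-1)\eta+1)=\mathfrak{s}_2(\eta)+\mathfrak{s}_2((q-2)\eta+1),$$
so the entire task reduces to determining which $\eta$ satisfy this digit-sum identity and then rewriting $n=2+4(q-1)\eta$ in closed form.

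First I would simplify the left-hand side. Since $q-1=2^\lambda$, the integer $(q-1)\eta=2^\lambda\eta$ has its $\lambda$ least significant binary digits equal to $0$, so adding $1$ merely switches on the lowest bit; this gives $\mathfrak{s}_2((q-1)\eta+1)=\mathfrak{s}_2(\eta)+1$ (with the natural convention $\mathfrak{s}_2(0)=0$ this also covers $\eta=0$). Substituting into the condition and cancelling $\mathfrak{s}_2(\eta)$ collapses it to the single requirement
$$\mathfrak{s}_2((q-2)\eta+1)=1,$$
that is, $(q-2)\eta+1=(2^\lambda-1)\eta+1$ must be a power of $2$.

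Next I would solve $(2^\lambda-1)\eta+1=2^t$ over the nonnegative integers. Writing it as $(2^\lambda-1)\eta=2^t-1$, I would invoke the standard divisibility fact that $2^a-1\mid 2^b-1$ if and only if $a\mid b$ (a consequence of $\gcd(2^a-1,2^b-1)=2^{\gcd(a,b)}-1$) to conclude that a solution exists exactly when $\lambda\mid t$, i.e. $t=\lambda\sigma$ with $\sigma\ge0$, in which case $\eta=\frac{2^{\lambda\sigma}-1}{2^\lambda-1}=\frac{(q-1)^\sigma-1}{q-2}$. Finally, inserting this value of $\eta$ into $n=2+4(q-1)\eta$ and combining over the common denominator $q-2$ yields
$$n=2+\frac{4(q-1)\big((q-1)^\sigma-1\big)}{q-2}=\frac{4(q-1)^{\sigma+1}-2q}{q-2},\qquad\sigma\ge0,$$
which is exactly the asserted form (the value $\sigma=0$ reproducing $\eta=0$, $n=2$).

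I do not expect a serious obstacle here, as the argument is an elementary specialisation of Theorem~\ref{Thm:SolvSmod2}. The only point requiring genuine care is the binary-digit computation $\mathfrak{s}_2((q-1)\eta+1)=\mathfrak{s}_2(\eta)+1$, which hinges on $q-1$ being a $2$-power—and thus would fail for non-Fermat $q$—together with the clean divisibility criterion for $2^a-1\mid 2^b-1$ used to parametrise the solutions.
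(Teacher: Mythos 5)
Your proposal is correct and follows essentially the same route as the paper's own proof: specialise Theorem~\ref{Thm:SolvSmod2}, use $q-1=2^\lambda$ to get $\mathfrak{s}_2((q-1)\eta+1)=\mathfrak{s}_2(\eta)+1$, reduce the condition to $(q-2)\eta+1$ being a $2$-power, and parametrise the solutions of $(2^\lambda-1)\eta=2^\tau-1$ by $\tau=\lambda\sigma$. The only (welcome) difference is that you make explicit the divisibility fact $2^a-1\mid 2^b-1\iff a\mid b$, which the paper leaves implicit.
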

\begin{proof}
If $q$ is a Fermat prime, that is, $q-1=2^\lambda$ is a non-trivial
$2$-power, then 
\[
\mathfrak{s}_2((q-1)\eta+1)=1+\mathfrak{s}_2(\eta), 
\]
and Equation (\ref{Eq:etaCond}) simplifies to
\begin{equation}
\label{Eq:etaCondFermat}
\mathfrak{s}_2((q-2)\eta+1) =1.
\end{equation}
Therefore $(q-2)\eta+1$ must be a 2-power, $(q-2)\eta+1=2^\tau$, say.
Equivalently, this is
$$(2^\la-1) \eta=2^\tau-1,$$
from which we infer that $\tau$ must be a multiple of $\la$,
$\tau=\la\si$ say. 
Hence, we find that $\eta$ must be of the form 
\[
\eta = \frac{2^{\lambda\sigma}-1}{2^\lambda-1},\quad\sigma\geq0;
\]
and, conversely, for these values of $\eta$, Equation
(\ref{Eq:etaCondFermat}) indeed holds true. 
We conclude that, in the case when $q$ is a Fermat prime, $N_q(n)$ is odd if, and only if, 
\[
n = 2+4(q-1)\frac{(q-1)^\sigma-1}{q-2} =
\frac{4(q-1)^{\sigma+1}-2q}{q-2},\quad\sigma\geq0, 
\]
and the corollary is proven.
\end{proof}

\begin{remark}
A detailed analysis of \eqref{Eq:PrincEq} in the case when $h=\vert
H\vert$ is odd, making use of Lemma~\ref{Lem:DetMod2hunger},
eventually leads to a new proof of Theorem~\ref{Thm:sqn}, the case 
where $q\equiv3\ (\mathrm{mod}\ 4)$ being particularly involved. 
\end{remark}

\section{The main result}
\label{sec:main}

Here we summarize our findings concerning the parity behaviour of the
generalized subgroup numbers of the groups $\Ga_m(q)$ from the
previous sections. If we combine Proposition~\ref{Prop:Reduction1} with 
Theorems~\ref{Thm:sqn}, \ref{Thm:NqnParity}, and \ref{Thm:SolvSmod2}, 
then we obtain the following result.

\begin{theorem} \label{thm:main}
For $m,n\geq1,$ an odd prime $q,$ and a finite group $H$ with
$\gcd(m,\vert H\vert)=1$ we have 
$$s_{\Gamma_m(q)}^H(n) = \sum_{d\mid m}\,s_{\Gamma_d(q)}^H(1,n),$$
where
$s_{\Gamma_d(q)}^H(1,n) \equiv 1\ (\mathrm{mod}\ 2)$ if, and only
if,
\begin{equation*} 
n=2d(1+2(q-1)\eta)\text{ with }
  \mathfrak{s}_2((q-1)\eta+1) = \mathfrak{s}_2(\eta)
+ \mathfrak{s}_2((q-2)\eta+1)\text{ and $q\nmid d$,}\\
\end{equation*}
or
\begin{equation*}
n=d(1+2(q-1)\eta)\text{ with }
  \mathfrak{s}_2((q-1)\eta+1) = \mathfrak{s}_2(\eta)
+ \mathfrak{s}_2((q-2)\eta+1)\text{ and $2\nmid d$, $q\nmid d$,
$2\nmid\vert H\vert$}.
\end{equation*}
\end{theorem}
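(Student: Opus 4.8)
The plan is to assemble the statement directly from the reduction formula \eqref{Eq:Reduction1} together with the three parity results already established, the entire content being a careful case analysis with no new computation. First I would invoke \eqref{Eq:Reduction1}, which gives $s_{\Gamma_m(q)}^H(n) = \sum_{d\mid m} s_{\Gamma_d(q)}^H(1,n)$ under the hypothesis $\gcd(m,\vert H\vert)=1$; this is exactly the displayed identity, so it remains only to determine the parity of each summand $s_{\Gamma_d(q)}^H(1,n)$.

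For this I would apply Proposition~\ref{Prop:Reduction1} with $m$ replaced by the divisor $d$. This expresses $s_{\Gamma_d(q)}^H(1,n)$ modulo~$2$ in five cases, the parities being, respectively, $N_q(n/d)$, $s_q(n/d)$, $N_q(n/d)$, $M_q(n/d)$, and $0$. The first simplification is to discard the $M_q$ case: by Theorem~\ref{Thm:NqnParity} the numbers $M_q(\cdot)$ are always even, so whenever $q\mid d$ the summand vanishes modulo~$2$. Thus oddness forces $q\nmid d$, and only the $N_q$ and $s_q$ cases can survive. Next I would substitute the explicit parity criteria. By Theorem~\ref{Thm:SolvSmod2}, $N_q(N)$ is odd exactly when $N=2+4(q-1)\eta$ subject to the digit-sum condition \eqref{Eq:etaCond}; with $N=n/d$ this reads $n=2d(1+2(q-1)\eta)$. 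By Theorem~\ref{Thm:sqn}, $s_q(N)$ is odd exactly when $N=1+2(q-1)\eta$ or $N=2+4(q-1)\eta$ under the same condition, which translate to $n=d(1+2(q-1)\eta)$ and $n=2d(1+2(q-1)\eta)$, respectively.

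The main bookkeeping---and the one place where care is genuinely needed---is to verify that the three scenarios producing the index value $n/d=2+4(q-1)\eta$ collapse into a single clause. These are: $2\mid d$ (first case of Proposition~\ref{Prop:Reduction1}, via $N_q$); $2\nmid d$ with $2\mid\vert H\vert$ (third case, via $N_q$); and $2\nmid d$ with $2\nmid\vert H\vert$ (second case, via the even branch of $s_q$). In each of these, $s_{\Gamma_d(q)}^H(1,n)$ is odd precisely when $n=2d(1+2(q-1)\eta)$ and \eqref{Eq:etaCond} holds, the only surviving hypothesis being $q\nmid d$, with no residual constraint on the parity of $d$ or of $\vert H\vert$. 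This is exactly the first clause of the theorem.

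The remaining odd contribution, $n/d=1+2(q-1)\eta$, arises only from the second case of Proposition~\ref{Prop:Reduction1} (the odd branch of $s_q$), which carries the hypotheses $2\nmid d$, $q\nmid d$, $2\nmid\vert H\vert$; this is the second clause. Since the two index forms lie in different residue classes---$n/d\equiv2\pmod 4$ in the first, $n/d$ odd in the second---the two clauses are mutually exclusive for any fixed $d$, so there is neither overlap nor double counting, and cases four and five of Proposition~\ref{Prop:Reduction1} contribute nothing. I would close by remarking that no further cases remain, so assembling these observations yields precisely the stated characterization. The step I expect to require the most attention is confirming that the parity constraints on $d$ and on $\vert H\vert$ attached to the individual cases of Proposition~\ref{Prop:Reduction1} really do disappear after merging the three $N_q/s_q$ contributions at index $2+4(q-1)\eta$; everything else is a direct substitution of the quoted criteria.
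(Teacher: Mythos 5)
Your proposal is correct and follows exactly the route the paper takes: the paper's entire proof is the single sentence that Theorem~\ref{thm:main} follows by combining Proposition~\ref{Prop:Reduction1} (applied to each divisor $d$ of $m$, via \eqref{Eq:Reduction1}) with Theorems~\ref{Thm:sqn}, \ref{Thm:NqnParity}, and \ref{Thm:SolvSmod2}. Your case analysis --- discarding the $M_q$ case by Theorem~\ref{Thm:NqnParity}, merging the three sources of the index form $n/d=2+4(q-1)\eta$ into the first clause with no residual condition on the parity of $d$ or $\vert H\vert$, and isolating the odd branch of $s_q$ as the second clause --- is precisely the bookkeeping the paper leaves implicit, and it is carried out correctly.
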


In the case where $q$ is a Fermat prime, we can be more explicit.
Namely, if we combine Proposition~\ref{Prop:Reduction1} with the
particular statement in Theorem~\ref{Thm:sqn},
Theorem~\ref{Thm:NqnParity}, and Corollary~\ref{Cor:GenParHecke},
we arrive at the following conclusion.

\begin{corollary} \label{cor:main}
For $m,n\geq1,$ a Fermat prime $q,$ and a finite group $H$ with\break
$\gcd(m,\vert H\vert)=1$ we have 
$$s_{\Gamma_m(q)}^H(n) = \sum_{d\mid m}\,s_{\Gamma_d(q)}^H(1,n),$$
where
$s_{\Gamma_d(q)}^H(1,n) \equiv 1\ (\mathrm{mod}\ 2)$ if, and only
if,
\begin{equation*} 
n=2d\,\frac {2(q-1)^\si-q} {q-2}\text{ with $\si\ge1$
  and $q\nmid d,$}
\end{equation*}
or
\begin{equation*}
n=d\,\frac {2(q-1)^\si-q} {q-2}\text{ with $\si\ge1,$
  $2\nmid d,$ $q\nmid d,$ and 
$2\nmid\vert H\vert$}.
\end{equation*}
\end{corollary}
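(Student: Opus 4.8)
The plan is to specialize Theorem~\ref{thm:main}, or equivalently to re-run the combination leading to it with the sharper Fermat-prime inputs. The opening identity $s_{\Gamma_m(q)}^H(n)=\sum_{d\mid m}s_{\Gamma_d(q)}^H(1,n)$ is precisely Equation~\eqref{Eq:Reduction1}, which holds because $\gcd(m,\vert H\vert)=1$; since every $d$ occurring in the sum divides $m$, we also have $\gcd(d,\vert H\vert)=1$, so Proposition~\ref{Prop:Reduction1} applies verbatim with $d$ in place of $m$. Thus it remains to read off, for each $d\mid m$, when $s_{\Gamma_d(q)}^H(1,n)$ is odd.

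First I would discard the fourth line of Proposition~\ref{Prop:Reduction1}: by Theorem~\ref{Thm:NqnParity} the quantities $M_q(\,\cdot\,)$ are always even, so the case $q\mid d$ never produces an odd value. The surviving odd contributions therefore all satisfy $q\nmid d$ and are governed by $s_q(n/d)$ or $N_q(n/d)$. Writing $A_\sigma:=\frac{2(q-1)^\sigma-q}{q-2}$, so that $2A_\sigma=\frac{4(q-1)^\sigma-2q}{q-2}$, the Fermat-prime evaluation~\eqref{eq:sqnFermat} states that $s_q(N)$ is odd exactly when $N=A_\sigma$ or $N=2A_\sigma$ for some $\sigma\ge1$, while Corollary~\ref{Cor:GenParHecke}, after the reindexing $\sigma\mapsto\sigma-1$, states that $N_q(N)$ is odd exactly when $N=2A_\sigma$ for some $\sigma\ge1$.

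The remaining, and only delicate, step is the bookkeeping that merges the case distinctions. Substituting the two evaluations into the three surviving lines of Proposition~\ref{Prop:Reduction1} yields four scenarios in which $s_{\Gamma_d(q)}^H(1,n)$ is odd: from the two $N_q$-lines, $n=2dA_\sigma$ under ($2\mid d$, $q\nmid d$) and under ($2\nmid d$, $q\nmid d$, $2\mid\vert H\vert$); and from the $s_q$-line, $n\in\{dA_\sigma,\,2dA_\sigma\}$ under ($2\nmid d$, $q\nmid d$, $2\nmid\vert H\vert$). The key observation is that $\gcd(d,\vert H\vert)=1$ forces the conditions ``$2\mid d$'' and ``$2\mid\vert H\vert$'' to be mutually exclusive; consequently the three scenarios carrying $n=2dA_\sigma$ together exhaust every admissible combination of parities and collapse to the single hypothesis ``$q\nmid d$ and $n=2dA_\sigma$'', which is the first alternative of the statement, with no constraint on $\vert H\vert$ or on the parity of $d$. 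The one remaining scenario $n=dA_\sigma$, requiring $2\nmid d$, $q\nmid d$ and $2\nmid\vert H\vert$, is exactly the second alternative. I expect this merging to be the main obstacle, as it is easy to miscount the cases; to close the argument cleanly I would finally verify that the two families $\{2dA_\sigma\}$ and $\{dA_\sigma\}$ are disjoint, which follows from a one-line parity check: $A_{\sigma}$ is odd (numerator $\equiv -q\equiv1$ and denominator $q-2$ both odd), so $2A_\sigma$ is even and can never equal an odd $A_{\sigma'}$. This makes the resulting description a genuine ``if and only if'', completing the proof.
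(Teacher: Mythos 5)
Your proof is correct and follows essentially the same route as the paper: the paper's entire proof of this corollary consists of the remark that one combines Proposition~\ref{Prop:Reduction1} with the Fermat-prime statement \eqref{eq:sqnFermat} of Theorem~\ref{Thm:sqn}, Theorem~\ref{Thm:NqnParity}, and Corollary~\ref{Cor:GenParHecke}, which is exactly your plan. Your write-up merely makes explicit the case-merging bookkeeping (and the harmless, not strictly necessary, parity check that $A_\sigma$ is odd) that the paper leaves to the reader.
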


Let us fix a Fermat prime $q=2^{\la}+1$.
Suppose that $p$ is an odd prime with the property that 
\begin{equation} \label{eq:Cq}
\hspace{5mm}\mbox{\it the multiplicative group generated by $q-1$ does not 
contain $2^{-1}q$ modulo $p$.}
\end{equation}
Then the equation
\begin{equation}
\label{Eq:Overlap}
t_1(2(q-1)^{\si_1}-q)=t_2(2(q-1)^{\si_2}-q)
\end{equation}
has no solutions $(t_1, t_2, \si_1, \si_2)$ where $p\mid t_1$, but $p\nmid t_2$,
since otherwise we would have 
\[
2(q-1)^{\sigma_2}\equiv q\ \mathrm{mod}\ p,
\] 
contrary to our assumption on $p$. Hence, if $q$ is a Fermat prime, 
and if the prime divisors of $m$
are among the set consisting of 2, $q$, and primes $p$ satisfying 
Condition~\eqref{eq:Cq}, then  
we obtain for all $H$ with $\gcd(m, |H|)=1$ that
\begin{multline} 
s_{\Gamma_m(q)}^H(n) \equiv 1\ (\mathrm{mod}\ 2) \mbox{ if, and only if, } 
n = t\,\frac{2(q-1)^\sigma-q}{q-2}\mbox{ with }\sigma\geq1,\, t\mid 2m,\,
q\nmid t,\\
\mbox{and $t$ even for $\vert H\vert$ even.}
\label{Eq:FermatBed}
\end{multline}
Statement (\ref{Eq:FermatBed}) raises the question which primes satisfy 
Condition~\eqref{eq:Cq} for a given Fermat prime $q$. This problem is 
addressed in our next result, for $q=3,5,17$. For larger Fermat primes the 
calculations involved, though essentially trivial, become unwieldy, and are 
omitted.
\begin{proposition}\leavevmode
\begin{enumerate}
\item[(i)] All prime numbers $p\equiv 7,17\ (\mathrm{mod}\ 24)$
satisfy Condition~\eqref{eq:Cq} with $q=3$.
\item[(ii)] All prime numbers 
$p\equiv 7, 11, 17, 19, 21, 23, 29, 33\ (\mathrm{mod}\ 40)$ 
satisfy Condit\-ion~\eqref{eq:Cq} with $q=5$.
\item[(iii)] All prime numbers 
\begin{multline*}
p\equiv 7, 13, 19, 21, 23, 31, 35, 39,  41, 43, 53, 57, 59, 63, 65, 67, 69, 71, 73, 77, 79, 83,\\ 93,  95, 97, 105, 113, 115, 117,  123, 125, 129 \ 
(\mathrm{mod}\ 136)
\end{multline*}
satisfy Condition~\eqref{eq:Cq} with $q=17$.
\end{enumerate}
\end{proposition}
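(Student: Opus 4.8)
The plan is to first recast Condition~\eqref{eq:Cq} in a form amenable to quadratic reciprocity. Writing $q-1=2^\lambda$ (so $\lambda=1,2,4$ for $q=3,5,17$), the multiplicative group generated by $q-1$ modulo $p$ is the cyclic group $\langle 2^\lambda\rangle\le(\Z/p\Z)^\times$, and it contains $2^{-1}q$ precisely when $q\equiv 2\cdot(q-1)^\sigma=2^{\lambda\sigma+1}\pmod p$ for some integer $\sigma$. Thus Condition~\eqref{eq:Cq} is equivalent to the assertion that $q$ does not lie in the coset $2\langle 2^\lambda\rangle$ modulo $p$, and it suffices to exhibit, for each $p$ in the claimed residue classes, a multiplicative character that separates $q$ from this coset.

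The key device is the Legendre symbol $\chi(\cdot)=\left(\frac{\cdot}{p}\right)$. On the coset $2\langle 2^\lambda\rangle$ it takes the values $\chi(2)^{\,1+\lambda\sigma}$. First I would treat the case $\lambda$ even (that is, $q=5,17$): here $\chi(2)^{\lambda\sigma}=1$, so $\chi$ is constantly equal to $\chi(2)$ on the whole coset. Consequently, if $\left(\frac{2}{p}\right)\neq\left(\frac{q}{p}\right)$, then $q\notin 2\langle 2^\lambda\rangle$, and Condition~\eqref{eq:Cq} holds. For $q=3$ ($\lambda=1$) the values $\chi(2)^{1+\sigma}$ run through $\{\chi(2),1\}$; this collapses to the single value $1$ exactly when $\left(\frac{2}{p}\right)=1$, and then membership of $q=3$ in the coset would force $\left(\frac{3}{p}\right)=1$. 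Hence for $q=3$ it suffices that $\left(\frac{2}{p}\right)=1$ and $\left(\frac{3}{p}\right)=-1$.

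It remains to translate these Legendre-symbol conditions into congruences modulo $8q$ and to verify that they reproduce exactly the listed residue classes. I would use the supplementary law $\left(\frac{2}{p}\right)=1\iff p\equiv\pm1\pmod 8$, together with quadratic reciprocity: for the Fermat primes $q\equiv1\pmod4$ (i.e.\ $q=5,17$) one has $\left(\frac{q}{p}\right)=\left(\frac{p}{q}\right)$, so $\left(\frac{q}{p}\right)$ depends only on $p\bmod q$; for $q=3$ one uses $\left(\frac{3}{p}\right)=1\iff p\equiv\pm1\pmod{12}$. Combining the condition on $p\bmod 8$ with that on $p\bmod q$ via the Chinese Remainder Theorem yields residue classes modulo $8q$; for instance, for $q=3$ the pair ``$\left(\frac2p\right)=1$, $\left(\frac3p\right)=-1$'' sieves down to $p\equiv7,17\pmod{24}$, while for $q=5$ the condition $\left(\frac{2}{p}\right)\neq\left(\frac5p\right)$ (equivalently $\left(\frac{10}{p}\right)=-1$) produces precisely the eight classes modulo $40$ listed in~(ii).

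The conceptually substantive step is the character argument of the second paragraph, which is short; the real labour---and the only genuine obstacle---is the purely mechanical Chinese-Remainder bookkeeping, which for $q=17$ amounts to tabulating the quadratic residues modulo $17$ and crossing them with the admissible classes modulo $8$, producing the thirty-two classes modulo $136$ in~(iii). This is exactly the ``essentially trivial but unwieldy'' computation alluded to in the statement. I would organise it by listing $\left(\frac{2}{p}\right)$ against $p\bmod 8$ and $\left(\frac{q}{p}\right)$ against $p\bmod q$, and then reading off those combinations for which $\left(\frac2p\right)\neq\left(\frac qp\right)$; one should check as a sanity test that the resulting count ($\tfrac12\varphi(8q)$ classes when $\lambda$ is even) matches the length of the printed list.
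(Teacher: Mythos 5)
Your proposal is correct and follows essentially the same route as the paper: both arguments observe that failure of Condition~\eqref{eq:Cq} forces a Legendre-symbol identity (your ``$\chi$ is constant on the coset $2\langle 2^\lambda\rangle$ when $\lambda$ is even'' is exactly the paper's $\big(\tfrac{2q}{p}\big)=1$ step, and your $q=3$ case matches its $\big(\tfrac{3}{p}\big)=1$ or $\big(\tfrac{6}{p}\big)=1$ dichotomy collapsed via $\big(\tfrac{2}{p}\big)=1$), after which quadratic reciprocity and the Chinese Remainder Theorem convert the sufficient conditions $\big(\tfrac{2}{p}\big)\neq\big(\tfrac{q}{p}\big)$, respectively $\big(\tfrac{2}{p}\big)=1$ and $\big(\tfrac{3}{p}\big)=-1$, into the listed residue classes modulo $8q$. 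The coset/character phrasing and the $\tfrac{1}{2}\varphi(8q)$ sanity check are nice touches, but the underlying argument is the same.
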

\begin{proof}
This is a simple application of quadratic reciprocity. Let us first consider 
the case when $q=3$. If $p\neq3$ is an odd prime not satisfying 
Condition~\eqref{eq:Cq} with $q=3$, then we have
\[
2^\alpha \equiv 3\ \mathrm{mod}\ p
\]
for some $\alpha\geq1$, implying $\big(\frac{3}{p}\big)=+1$ or 
$\big(\frac{6}{p}\big)=+1$, according to whether or not $\alpha$ is even. 
Hence, every prime $p\equiv\pm1\ (\mathrm{mod}\ 8)$ not satisfying 
\eqref{eq:Cq} with $q=3$ has the property that $\big(\frac{3}{p}\big)=+1$. By 
quadratic reciprocity,
\[
\Big(\frac{p}{3}\Big) = \begin{cases}
                           +1,&p\equiv 1\ (\mathrm{mod}\ 4),\\[2mm]
                           -1,&p\equiv3\ (\mathrm{mod}\ 4),
                           \end{cases}
\]
for each prime $p$ not satisfying \eqref{eq:Cq} with $q=3$ and such that 
$p\equiv\pm1\ (\mathrm{mod}\ 8)$. It follows from this that every prime $p$ 
such that
\[
p\equiv\pm1\ (\mathrm{mod}\ 8)\,\mbox{ and }\,\Big(\frac{p}{3}\Big)=\begin{cases}
                            -1,& p\equiv 1\ (\mathrm{mod}\ 4),\\[2mm]
                            +1,& p\equiv 3\ (\mathrm{mod}\ 4),
                            \end{cases}
\]
will satisfy Condition~\eqref{eq:Cq} with $q=3$. Thus, both
\[
p\equiv 1\ (\mathrm{mod}\ 8)\,\mbox{ and }\,p\equiv 2\ (\mathrm{mod}\ 3)
\]
as well as
\[
p\equiv 7\ (\mathrm{mod}\ 8)\,\mbox{ and }\,p\equiv 1\ (\mathrm{mod}\ 3)
\]
are hypotheses sufficient to ensure that $p$ meets 
Condition~\eqref{eq:Cq} with $q=3$. The first pair of congruences is 
equivalent to $p\equiv 17\ (\mathrm{mod}\ 24)$, while the second one is 
equivalent to $p\equiv 7\ (\mathrm{mod}\ 24)$, whence Assertion~(i).

For 
a Fermat prime $q$ with $q>3$, we have $q\equiv 1\ (\mathrm{mod}\ 4)$, so
\[
\Big(\frac{p}{q}\Big) = \Big(\frac{q}{p}\Big)
\]
for any odd prime $p\neq q$ by quadratic reciprocity. Consequently, if $p$ is 
such that $p\equiv\pm1\ (\mathrm{mod}\ 8)$ and \eqref{eq:Cq} does not hold 
for $p$, then we find that
\[
1 = \Big(\frac{2q}{p}\Big) = \Big(\frac{2}{p}\Big)\Big(\frac{q}{p}\Big) = 
\Big(\frac{q}{p}\Big) =  \Big(\frac{p}{q}\Big),
\]
while for $p\equiv\pm3\ (\mathrm{mod}\ 8)$ such that \eqref{eq:Cq} does 
not hold, we obtain $\big(\frac{p}{q}\big)=-1$. Hence, every prime $p$ 
satisfying either
\[
p\equiv\pm 1\ (\mathrm{mod}\ 8)\,\mbox{ and }\,\Big(\frac{p}{q}\Big)=-1
\]
or
\[
p\equiv\pm 3\ (\mathrm{mod}\ 8)\,\mbox{ and }\,\Big(\frac{p}{q}\Big)=+1
\]
will meet Condition~\eqref{eq:Cq}. Thus, it only remains to translate the 
statement concerning $\big(\frac{p}{q}\big)$ into a congruence condition 
modulo~$q$, and to solve the resulting pairs of congruences, each pair leading 
to an equivalent congruence conditon for $p$ modulo~$8q$. 

For $q=5$, we have
\[
\Big(\frac{p}{5}\Big)=+1\,\mbox{ if, and only if, }\, p\equiv\pm1\ 
(\mathrm{mod}\ 5),
\]
so both
\[
p\equiv\pm1\ (\mathrm{mod}\ 8)\,\mbox{ and }\,p\equiv 2,3\ (\mathrm{mod}\ 5)
\]
as well as
\[
p\equiv\pm3\ (\mathrm{mod}\ 8)\,\mbox{ and }\,p\equiv1,4\ (\mathrm{mod}\ 5)
\]
ensure that $p$ meets Condition~\eqref{eq:Cq} with $q=5$. 
This yields Part~(ii), 
while a corresponding calculation for $q=17$ gives Assertion~(iii).
\end{proof}
For $q$ a Fermat prime and $m$ subject only to the condition that 
$\gcd(m,\vert H\vert)=1$, we claim that (\ref{Eq:FermatBed})
holds true for $n\geq\frac{4m^2}{q-2}$. 
To prove this, it suffices to give an upper bound for non-diagonal
solutions of Equation~(\ref{Eq:Overlap}). 
Assume that $t_2>t_1$. Then
\begin{equation}
\label{Eq:FermatBound}
0 < \sigma_1 - \sigma_2 < \frac{\log t_2/t_1}{\log (q-1)},
\end{equation}
and we compute: 
\begin{align*}
t_2\big(2(q-1)^{\si_2}-q\big) & = 
\gcd\Big(t_1\big(2(q-1)^{\si_1}-q\big), t_2\big(2(q-1)^{\si_2}-q\big)\Big)\\[2mm]
  & \leq  \gcd\Big(t_1t_2\big(2(q-1)^{\si_1}-q\big)-t_1t_2\big(2(q-1)^{\si_1}-
q (q-1)^{\si_1-\si_2}\big),\\[1mm]
&\kern8.5cm
t_2\big(2(q-1)^{\si_2}-q\big)\Big)\\[2mm]
  & \leq  \gcd\Big(qt_1t_2\big((q-1)^{{\si_1}-{\si_2}}-1\big),
  t_2\big(2(q-1)^{\si_2}-q\big)\Big)\\[2mm]
  & \leq t_2 \gcd\Big(t_1\big((q-1)^{\sigma_1-\sigma_2}-1\big), 2(q-1)^{\sigma_2}-q\Big)\\[2mm]
  & < t_2^2,
\end{align*}
where we have used (\ref{Eq:FermatBound}) in the last step. 
Our claim follows now from Corollary~\ref{cor:main} 
and the fact that $t_2\leq 2m$. 

In particular, we
see from the last observation and (\ref{Eq:FermatBed}), 
that the parity behaviour of $s_{\Gamma_m}^H(n)$
as a function in $n$ determines $m$ up to a power of
$q$, provided that $\gcd(m,|H|)=1$.

As a final example, we compute the parity behaviour of 
$s_{\Gamma_m(q)}^H(n)$ for $q=3$ and $m=625$. We have to determine the 
non-diagonal solutions of the equation
\begin{equation}
\label{Eq:Ueberlapp2}
t_1(2^{\sigma_1+1}-3) = t_2(2^{\sigma_2+1}-3)
\end{equation}
for $\sigma_1,\sigma_2\geq1$ and $t_1,t_2\mid 1250$. We have $t_1\equiv 
t_2\ (\mathrm{mod}\ 2)$; thus, we may concentrate on the case when both $t_1$ 
and $t_2$ are odd, bearing in mind that each non-diagonal solution of 
(\ref{Eq:Ueberlapp2}) with $t_1,t_2$ odd gives rise to two non-diagonal 
solutions, namely $(t_1,t_2,\sigma_1,\sigma_2)$ and $(2t_1,2t_2,\sigma_1,
\sigma_2)$.

Suppose without loss of generality that $t_2>t_1$, and let 
$\delta:=v_5(t_2)-v_5(t_1)$. For $\delta=1$, we deduce that $2^{\sigma_1+1} 
\equiv 3\ (\mathrm{mod}\ 5)$, which is equivalent to $\sigma_1 \equiv 2\ 
(\mathrm{mod}\ 4)$. The case where $\sigma_1=2$ leads to the solutions
\begin{equation}
\label{Eq:Ueberlapp3}
(t_1,t_2,\sigma_1,\sigma_2) = (5^a,5^{a+1},2,1),\quad a=0,1,2,3.
\end{equation}
If $\sigma_1=6,10$, then $2^{\sigma_2+1}$ would have to equal $28$ 
respectively $412$, which is impossible; while for $\sigma_1\geq14$, we 
deduce that
\[
2^{\sigma_2+1} - 3 \geq \frac{2^{15}-3}{5} = 6553,
\]
contradicting the estimate
\[
2^{\sigma_2+1}-3 < t_2\leq 625
\]
obtained above. Hence, the only non-diagonal solutions of 
(\ref{Eq:Ueberlapp2}) with $t_1$ and $t_2$ odd and $\delta=1$ are given by 
(\ref{Eq:Ueberlapp3}).

For $\delta=2$, we get that $2^{\sigma_1+1} \equiv 3\ (\mathrm{mod}\ 25)$, 
which is equivalent to  $\sigma_1 \equiv 6\ (\mathrm{mod}\ 20)$. The case 
where $\sigma_1=6$ leads to the solutions
\begin{equation}
\label{Eq:Ueberlapp4}
(t_1,t_2,\sigma_1,\sigma_2) = (5^a,5^{a+2}, 6,2),\quad a=0,1,2.
\end{equation}
If $\sigma_1\geq 26$, then we deduce that
\[
2^{\sigma_2+1}-3 \geq \frac{2^{27}-3}{5^2} = 5368709,
\]
again contradicting the estimate $2^{\sigma_2+1}-3 < t_2$. Consequently 
(\ref{Eq:Ueberlapp4}) describes the only non-diagonal solutions of 
(\ref{Eq:Ueberlapp2}) with $t_1,t_2$ odd and $\delta=2$.

For $\delta=3$, we have $2^{\sigma_1+1}\equiv 3\ (\mathrm{mod}\ 125)$, 
which is equivalent to $\sigma_1\equiv 6\ (\mathrm{mod}\ 100)$, leading to 
the solutions
\begin{equation}
\label{Eq:Ueberlapp5}
(t_1,t_2,\sigma_1,\sigma_2) = (5^a, 5^{a+3},6,1),\quad a=0,1.
\end{equation}
Finally, for $\delta=4$, we have $2^{\sigma_1+1}\equiv 3\ 
(\mathrm{mod}\ 625)$, which is equivalent to $\sigma_1\equiv\break 106\ 
(\mathrm{mod}\ 500)$. But $106$ is already well above the range allowed for 
$\sigma_1$. Hence, there is precisely one non-diagonal solution of 
(\ref{Eq:Ueberlapp2}) for $n=5,10,25,50,3125,6250$; there are exactly three 
non-diagonal solutions for $n=125,250,625,1250$; and there are no non-diagonal 
solutions for other values of $n$. Consequently, for $\gcd(5,\vert H\vert)=1$, 
Assertion (\ref{Eq:FermatBed}) holds true for $n>6250$, while for $n\leq 6250$ 
we have
\begin{multline*}
s_{\Gamma_{625}(3)}^H(n) \equiv 1\ \mathrm{mod}\ 2\,\mbox{ if, 
and only if, }\\
n= 1,2,13,26,29,58,61,65,122,125,130,145,250,253,290,305,325,506,\\
509,610,625,650,725,1018,1021,1250,1265,1450,1525,1625,2042,2045,\\
2530,2545,3050,3250,3625,4090,4093,5090,5105
\end{multline*}
if $\vert H\vert$ is odd, and
\begin{multline*}
s_{\Gamma_{625}(3)}^H(n) \equiv 1\ \mathrm{mod}\ 2\,\mbox{ if, 
and only if, }\\
n=2,26,58,122,130,250,290,506,610,650,1018,1250,1450,2042,2530,\\
3050,3250,4090,5090
\end{multline*}
for $\vert H\vert$ even.

\section*{Acknowledgements}
The authors are indebted to the anonymous referee for a careful 
reading of the manuscript, and for helpful 
suggestions concerning the presentation of the material.

\bigskip

\appendix

\global\def\theTheorem{\mbox{A
}}
\setcounter{theorem}{0}

\def\thesection{\!\!\!}
\section{Proofs of some auxiliary results}

\def\thesection{A}

\subsection{Proof of Lemma~\ref{lem:multi}}
\label{Subsec:Lem12}
Suppose first that $\rh_1\equiv \rh_2\equiv\dots\equiv
\rh_\al\equiv1\ (\mathrm{mod}\ 4)$. It is well known that the 2-adic valuation
of the multinomial coefficient 
$\binom{\rho_1+\rh_2+\ldots+\rho_\al}
{\rho_1,\rh_2\ldots,\rho_\al}$ is equal to the number of carries, $C$
say, occurring during addition of the numbers
${\rho_1,\rh_2\ldots,\rho_\al}$. If we assume that, in total, we find
$E_\ell$ 1's as the $\ell$-th digit in the binary representations of
${\rho_1,\rh_2\ldots,\rho_\al}$, then 
\begin{align*}
C&=\fl{\frac {E_0}2}+\fl{\frac {E_1+\fl{\frac {E_0}2}}2}
+\fl{\frac {E_2+\fl{\frac{E_1+\fl{\frac {E_0}2}}2}}2}+\cdots\\
&\ge
\fl{\frac {E_0}2}+\fl{\frac {\fl{\frac {E_0}2}}2}
+\fl{\frac {\fl{\frac {\fl{\frac {E_0}2}}2}}2}+\cdots
=\sum _{\ell\ge1} ^{}\fl{\frac {E_0}{2^\ell}}.
\end{align*}
According to our assumption, we have $E_0=\al$, whence
$$v_2\left(\binom{\rho_1+\rh_2+\ldots+\rho_\al}
{\rho_1,\rh_2\ldots,\rho_\al}\right) = C\,\ge\,
\sum _{\ell\ge1}\fl{\frac {\al}{2^\ell}}=
v_2(\al!).
$$
The second assertion is established in a similar manner.\quad \quad \qed

\subsection{Proof of Lemma~\ref{Lem:BinomParity}}
\label{Subsec:Lem14}
Let $k=\sum_{j\geq0}k_j2^j$ be the $2$-adic expansion of $k$. Then
\[
2^\lambda k+1 = 1\cdot2^0 + 0\cdot2^1 +\cdots+0\cdot 2^{\lambda-1} +
\sum_{j\geq0} k_j2^{j+\lambda} 
\]
is the $2$-adic expansion of $2^\lambda k+1$. Suppose first that $k$
is odd. Then $k_0=1$, and so 
\[
k-1 = 0\cdot2^0 + \sum_{j\geq1} k_j 2^j.
\]
By Lucas' congruence
(cf., for instance,
\cite[Theorem~3.4.1]{Cameron}), $\binom{2^\lambda
k+1}{k-1}\equiv1$~(mod~2) is equivalent to the conjunction of 
\begin{equation}
\label{Eq:BinomParity1}
k_1=k_2=\cdots=k_{\lambda-1}=0
\end{equation}
and 
\begin{equation}
\label{Eq:BinomParity2}
k_{\mu+\lambda} \leq k_\mu,\quad \mu\geq0.
\end{equation}
The conjunction of (\ref{Eq:BinomParity1}) and (\ref{Eq:BinomParity2})
in turn is equivalent to 
\begin{equation}
\label{Eq:BinomParity3}
k_j=0\,\,(j\not\equiv0\ (\mathrm{mod}\ \lambda))\,\,\mbox{ and
}\,\,k_{\mu\lambda}\leq k_{(\mu-1)\lambda}\,\,(\mu\geq1). 
\end{equation}
Let $\sigma\geq1$ be smallest with the property that $k_{\sigma\lambda}=0$. Then
$k_{\mu\lambda}=1$ for $\mu=0,1,\ldots,\sigma-\nolinebreak1$ and $k_{\mu\lambda}=0$
for $\mu\geq\sigma$, so, by (\ref{Eq:BinomParity3}),
\[
k = \sum_{0\leq \mu\leq\sigma-1} 2^{\lambda \mu} =
\frac{2^{\lambda\sigma}-1}{2^\lambda-1},\quad\sigma\geq1. 
\]
Hence, 
\begin{equation}
\label{Eq:BinomParityCond1}
\binom{2^\lambda k+1}{k-1} \equiv 1\ (\mathrm{mod}\ 2)\
\text { if, and only if, }\
k=\frac{2^{\lambda\sigma}-1}{2^\lambda-1}\,\mbox{ for some }\,
\sigma\geq1,\quad k\mbox{ odd.} 
\end{equation}
Now suppose that $k$ is even, and write
\[
k=0\cdot2^0 +\cdots+0\cdot2^{r-1} + 1\cdot2^r + \sum_{j>r} k_j2^j
\]
with some $r\geq1$ (such $r$ must exist since $k>0$). Then
\[
k-1 = 1\cdot2^0+\cdots+1\cdot2^{r-1} + 0\cdot2^r + \sum_{j>r} k_j2^j
\]
is the $2$-adic expansion of $k-1$ in this case. We now distinguish
two cases, according to whether or not $\lambda=1$.

(i) If $\lambda>1$, then the coefficient of $2^1$ in $2^\lambda k+1$
vanishes, while, for $r\geq2$, the corresponding coefficient of $k-1$
equals $1$. Hence, $\binom{2^\lambda k+1}{k-1}\equiv1$~(mod~2)
together with $\lambda>1$ forces $r=1$, so that the expansion of $k-1$
reads 
\[
k-1 = 1\cdot2^0 + 0\cdot2^1 + \sum_{j\geq2} k_j2^j.
\]
Now, again by Lucas' theorem, $\binom{2^\lambda
k+1}{k-1}\equiv1$~(mod~2) is equivalent to the conjunction of 
\begin{equation}
\label{Eq:BinomParity4}
k_0=k_2=\cdots=k_{\lambda-1}=0,\, k_1=1
\end{equation}
and
\begin{equation}
\label{Eq:BinomParity5}
k_j\leq k_{j-\lambda},\quad j\geq\lambda.
\end{equation}
The conjunction of (\ref{Eq:BinomParity4}) and (\ref{Eq:BinomParity5})
in turn is equivalent to 
\begin{equation}
\label{Eq:BinomParity6}
k_j=0\,\,(j\not\equiv1\ (\mathrm{mod}\ \lambda))\,\mbox{ and }\,k_{\mu\lambda+1}
\leq k_{(\mu-1)\lambda+1}\, (\mu\geq1). 
\end{equation}
Let $\sigma\geq1$ be smallest with the property that
$k_{\sigma\lambda+1}=0$, so that $k_{\mu\lambda+1}=1$ for
$0\leq\mu\leq\sigma-1$ and $k_{\mu\lambda+1}=0$ for
$\mu\geq\sigma$. Thus, by (\ref{Eq:BinomParity6}), 
\[
k = \sum_{0\leq\mu\leq\sigma-1} 2^{\lambda\mu+1}
= 2\bigg(\sum_{0\leq\mu\leq\sigma-1} 2^{\lambda\mu}\bigg)
= \frac{2^{\lambda\sigma+1}-2}{2^\lambda-1}.
\]
Hence, 
\begin{multline}
\label{Eq:BinomParityCond2}
\binom{2^\lambda k+1}{k-1} \equiv 1\ (\mathrm{mod}\ 2)\
\text { if, and only if, }\\
k=\frac{2^{\lambda\sigma+1}-2}{2^\lambda-1}\,\mbox{ for some }\,
\sigma\geq1,\quad(2\mid k,\,\lambda\geq2). 
\end{multline}
(ii) Now let $\lambda=1$, so that
\[
2^\lambda k+1 = 1\cdot2^0 + \sum_{j\geq0} k_j 2^{j+1}.
\]
Application of Lucas' congruence shows that, in this case,
$\binom{2^\lambda k+1}{k-1}\equiv1\ (\mathrm{mod}\ 2)$ is equivalent to the
conjunction of 
\begin{equation}
\label{Eq:BinomParity7}
k_0=k_1=\cdots=k_{r-2}=1,\,k_{r-1}=0,\,k_r=1, 
\end{equation}
and
\begin{equation}
\label{Eq:BinomParity8}
k_j\leq k_{j-1},\quad j>r.
\end{equation}
Since, by definition of $r$, we have $k_j=0$ for $0\leq j<r$, we again
conclude that $r=1$, so that the conjunction of
(\ref{Eq:BinomParity7}) and (\ref{Eq:BinomParity8}) is now equivalent
to 
\begin{equation}
\label{Eq:BinomParity9}
k_0=0,\, k_1=1,\, k_j\leq k_{j-1}\, (j\geq2).
\end{equation}
Let $\sigma\geq1$ be smallest with the property that
$k_{\sigma+1}=0$. Then $k_\mu=1$ for $1\leq \mu\leq\sigma$, and
$k_\mu=0$ for $\mu>\sigma$, and so, by (\ref{Eq:BinomParity9}), 
\[
k = \sum_{1\leq\mu\leq\sigma} 2^\mu = 2^{\sigma+1}-2. 
\]
Thus, 
\begin{equation}
\label{Eq:BinomParityCond3}
\binom{2^\lambda
k+1}{k-1}\equiv1\ (\mathrm{mod}\ 2)\
\text { if, and only if, }\
k=2^{\sigma+1}-2\,\mbox{
for some }\sigma\geq1,\quad(2\mid k,\,\lambda=1). 
\end{equation}
Our claim (\ref{Eq:BinomParityCond}) follows now from
(\ref{Eq:BinomParityCond1}), (\ref{Eq:BinomParityCond2}), and
(\ref{Eq:BinomParityCond3}).\quad \quad \qed 

\subsection{Proof of Lemma~\ref{Lem:FkRed}}
\label{Subsec:Lem18}
We use induction on $k$. For $k=0,1$, Equation (\ref{Eq:FkRed}) is
trivial, while for $k=2$ it follows from (\ref{Eq:FRelI}) with
$k=1$. Suppose that (\ref{Eq:FkRed}) holds for $k\leq K$ with some
integer $K\geq2$. Then, by (\ref{Eq:FRelI}) with $k=K$, the inductive hypothesis,
and the definition of the coefficient systems $\big(c_k^{(\mu)}\big)$
and $\big(d_k^{(\nu)}\big)$, we have  
\begin{align*}
F_{K+1} &= azF_K(z) + hz^2F_{K-1}(z) + hz^3F_{K-1}'(z)\\[2mm]
&= az\bigg\{\sum_{\mu=0}^{\lfloor\frac{K}{2}\rfloor} c_K^{(\mu)} h^\mu
z^{K+\mu} F_0^{(\mu)}\, +\, \sum_{\nu=0}^{\lfloor\frac{K-1}{2}\rfloor}
d_K^{(\nu)} h^\nu z^{K+\nu-1} F_1^{(\nu)}\bigg\}\\ 
&\hspace{1.5cm}+\,hz^2\bigg\{\sum_{\mu=0}^{\lfloor\frac{K-1}{2}\rfloor}
c_{K-1}^{(\mu)} h^\mu z^{K+\mu-1} F_0^{(\mu)}\, +\,
\sum_{\nu=0}^{\lfloor\frac{K-2}{2}\rfloor} d_{K-1}^{(\nu)} h^\nu
z^{K+\nu-2} F_1^{(\nu)}\bigg\}\\ 
&\hspace{1.5cm}+\,hz^3\bigg\{\sum_{\mu=0}^{\lfloor\frac{K-1}{2}\rfloor}
c_{K-1}^{(\mu)} h^\mu z^{K+\mu-1} F_0^{(\mu)}\, +\,
\sum_{\nu=0}^{\lfloor\frac{K-2}{2}\rfloor} d_{K-1}^{(\nu)} h^\nu
z^{K+\nu-2} F_1^{(\nu)}\bigg\}'\\[2mm] 
&=(ac_K^{(0)}+hKc_{K-1}^{(0)})z^{K+1}F_0(z)\\
&\hspace{1.5cm}+\,\sum_{\mu=1}^{\lfloor\frac{K-1}{2}\rfloor}
(ac_K^{(\mu)}+h(K+\mu)c_{K-1}^{(\mu)}+c_{K-1}^{(\mu-1)})h^\mu
z^{K+\mu+1}F_0^{(\mu)}(z)\\ 
&\hspace{1.5cm}+\,h^{\lfloor\frac{K+1}{2}\rfloor}
z^{K+\lfloor\frac{K+1}{2}\rfloor+1}
F_0^{(\lfloor\frac{K+1}{2}\rfloor)}(z)\times\begin{cases} 
c_{K-1}^{(\frac{K-1}{2})},& K\mbox{ odd}\\[2mm]
ac_K^{(K/2)}+c_{K-1}^{(\frac{K-2}{2})},& K\mbox{ even}
\end{cases}\\
&\hspace{1.5cm}+\,(ad_K^{(0)}+h(K-1)d_{K-1}^{(0)})z^K F_1(z)\\
&\hspace{1.5cm}+\,\sum_{\nu=1}^{\lfloor\frac{K-2}{2}\rfloor}
(ad_K^{(\nu)}+h(K+\nu-1)d_{K-1}^{(\nu)}+d_{K-1}^{(\nu-1)})z^{K+\nu}
h^\nu F_1^{(\nu)}(z)\\ 
&\hspace{1.5cm}+\,h^{\lfloor\frac{K}{2}\rfloor}
z^{K+\lfloor\frac{K}{2}\rfloor}
F_1^{(\lfloor\frac{K}{2}\rfloor)}(z)\times\begin{cases} 
ad_K^{(\frac{K-1}{2})} + d_{K-1}^{(\frac{K-3}{2})},&K\mbox{ odd}\\[2mm]
d_{K-1}^{(\frac{K-2}{2})},&K\mbox{ even}
\end{cases}\\[2mm]
&= c_{K+1}^{(0)} z^{K+1}
F_0(z)\,+\,\sum_{\mu=1}^{\lfloor\frac{K-1}{2}\rfloor} c_{K+1}^{(\mu)}
h^\mu z^{K+\mu+1} F_0^{(\mu)}(z)\\ 
&\hspace{1.5cm}+\,c_{K+1}^{(\lfloor\frac{K+1}{2}\rfloor)}
h^{\lfloor\frac{K+1}{2}\rfloor} z^{K+\lfloor\frac{K+1}{2}\rfloor+1}
F_0^{(\lfloor\frac{K+1}{2}\rfloor)}(z)\\ 
&\hspace{1.5cm}+\,d_{K+1}^{(0)} z^K
F_1(z)\,+\,\sum_{\nu=1}^{\lfloor\frac{K-2}{2}\rfloor} d_{K+1}^{(\nu)}
h^\nu z^{K+\nu} F_1^{(\nu)}(z)\\ 
&\hspace{1.5cm}+\,d_{K+1}^{(\lfloor\frac{K}{2}\rfloor)}
h^{\lfloor\frac{K}{2}\rfloor} z^{K+\lfloor\frac{K}{2}\rfloor}
F_1^{(\lfloor\frac{K}{2}\rfloor)}(z)\\[2mm] 
&= \sum_{\mu=0}^{\lfloor\frac{K+1}{2}\rfloor} c_{K+1}^{(\mu)} h^\mu
z^{K+\mu+1}
F_0^{(\mu)}(z)\,+\,\sum_{\nu=0}^{\lfloor\frac{K}{2}\rfloor}
d_{K+1}^{(\nu)} h^\nu z^{K+\nu} F_1^{(\nu)}(z), 
\end{align*}
as required.\quad \quad \qed

\subsection{Explicit description of the numbers 
$\omega_{\kappa,\lambda}$ and $b_{\kappa}(z)$}
\label{app:db}

We have
\begin{multline*}
\omega_{\kappa,\lambda} =\\[3mm] 
\begin{cases}
                             d_{q-1}^{(0)}z^q-1;&\kappa=\lambda=0\\[2mm]
                             d_{q-1}^{(\lambda)}
z^{\lambda+q};&\kappa=0,\,\lambda=1,\ldots,\frac{q-3}{2}\\[2mm]
                             d_q^{(0)}z^{q-1}+b;&\kappa=1,\,\lambda=0\\[2mm]
                             d_q^{(\lambda)}z^{\lambda+q-1};&
\kappa=1,\,\lambda=1,\ldots,\frac{q-1}{2}\\[2mm]
                             d_{\kappa-1}^{(0)}hz^{\kappa-2}+
bd_\kappa^{(0)}z^{\kappa-1}\\
\kern1cm +d_{\kappa+q-1}^{(0)}z^{\kappa+q-2};&3\leq\kappa\leq q-2,\,
\kappa\mbox{ odd},\,\lambda=0\\[2mm]
                             -((\lambda-1)d_{\kappa-1}^{(\lambda)}h+
d_{\kappa-1}^{(\lambda-1)})z^{\kappa+\lambda-2}\\
\kern1cm +bd_\kappa^{(\lambda)}z^{\kappa+\lambda-1}+
d_{\kappa+q-1}^{(\lambda)}z^{\kappa+\lambda+q-2};&
3\leq\kappa\leq q-2,\,\kappa\mbox{ odd},\,\lambda=1,\ldots,
\frac{\kappa-3}{2}\\[2mm]
                             -d_{\kappa-1}^{(\frac{\kappa-3}{2})}
z^{\frac{3\kappa-5}{2}}+
bd_\kappa^{(\frac{\kappa-1}{2})}z^{\frac{3\kappa-3}{2}}\\
\kern1cm +d_{\kappa+q-1}^{(\frac{\kappa-1}{2})}z^{\frac{3\kappa+2q-5}{2}};&
3\leq\kappa\leq q-2,\,\kappa\mbox{ odd},\,\lambda=
\frac{\kappa-1}{2}\\[2mm]
                             d_{\kappa+q-1}^{(\lambda)}z^{\kappa+\lambda+q-2};&
3\leq\kappa\leq q-2,\,\kappa\mbox{
odd},\,\lambda=\frac{\kappa+1}{2},\ldots,
\frac{\kappa+q-2}{2}\\[2mm]
                             d_{\kappa-1}^{(0)}hz^{\kappa-2}+
bd_\kappa^{(0)}z^{\kappa-1}\\
\kern1cm +d_{\kappa+q-1}^{(0)}z^{\kappa+q-2};&
2\leq\kappa\leq q-3,\,\kappa\mbox{ even},\,\lambda=0\\[2mm]
                             -((\lambda-1)d_{\kappa-1}^{(\lambda)} 
h+d_{\kappa-1}^{(\lambda-1)})z^{\kappa+\lambda-2}\\
\kern1cm +bd_\kappa^{(\lambda)}z^{\kappa+\lambda-1}+
d_{\kappa+q-1}^{(\lambda)}z^{\kappa+\lambda+q-2};&
2\leq\kappa\leq q-3,\,\kappa\mbox{ even},\,\lambda=1,\ldots,
\frac{\kappa-2}{2}\\[2mm]
                             -d_{\kappa-1}^{(\frac{\kappa-2}{2})}
z^{\frac{3\kappa-4}{2}}
+d_{\kappa+q-1}^{(\kappa/2)}z^{\frac{3\kappa+2q-4}{2}};&
2\leq\kappa\leq q-3,\,\kappa\mbox{ even},\,\lambda
=\frac{\kappa}{2}\\[2mm]
                             d_{\kappa+q-1}^{(\lambda)}z^{\kappa+\lambda+q-2};&
2\leq\kappa\leq q-3,\,\kappa\mbox{ even},\,\lambda
=\frac{\kappa+2}{2},\ldots,\frac{\kappa+q-3}{2}\\[2mm]
                             0;&\mbox{otherwise,}
                             \end{cases}
\end{multline*}
and
\begin{multline*}
b_\kappa(z) =\\[3mm] 
\begin{cases}
                  -(az+bz^2+c_{q-1}^{(0)}z^{q+1})f(z)
-
\sum\limits_{\mu=1}^{\frac{q-1}{2}}c_{q-1}^{(\mu)}h^\mu
z^{\mu+q+1}f^{(\mu)}(z);&\kern-1cm \kappa=0\\[2mm]
                  -c_q^{(0)}z^qf(z) - h(c_q^{(1)}z^{q+1}-z)f'(z) -
\sum\limits_{\mu=2}^{\frac{q-1}{2}} c_q^{(\mu)} h^\mu
z^{\mu+q}f^{(\mu)}(z);&\kern-1cm \kappa=1\\[2mm]
                  -(bc_\kappa^{(0)}z^\kappa+
c_{\kappa+q-1}^{(0)}z^{\kappa+q-1})f(z)\\ 
\kern1cm - \sum\limits_{\mu=1}^{\frac{\kappa-1}{2}}
((c_{\kappa-1}^{(\mu-1)}-\mu h c_{\kappa-1}^{(\mu)}) 
z^{\kappa+\mu-1}\\ 
\hspace{2cm} + bc_\kappa^{(\mu)} z^{\kappa+\mu} + 
c_{\kappa+q-1}^{(\mu)} z^{\kappa+\mu+q-1})h^\mu f^{(\mu)}(z)\\ 
\kern1cm - (c_{\kappa-1}^{(\frac{\kappa-1}{2})}z^{\frac{3\kappa-1}{2}}
+c_{\kappa+q-1}^{(\frac{\kappa+1}{2})}z^{\frac{3\kappa+2q-1}{2}})
h^{\frac{\kappa+1}{2}} f^{(\frac{\kappa+1}{2})}(z)\\ 
\kern1cm - \sum\limits_{\mu=\frac{\kappa+3}{2}}^{\frac{\kappa+q-2}{2}} 
c_{\kappa+q-1}^{(\mu)} h^\mu z^{\kappa+\mu+q-1} 
f^{(\mu)}(z);&\kern-1cm  3\leq\kappa\leq q-2,\,\kappa\mbox{ odd}\\[2mm]
                  -(bc_\kappa^{(0)}z^\kappa+c_{\kappa+q-1}^{(0)}
z^{\kappa+q-1})f(z)\\ 
\kern1cm - \sum\limits_{\mu=1}^{\frac{\kappa-2}{2}}
((-\mu h c_{\kappa-1}^{(\mu)}-c_{\kappa-1}^{(\mu-1)})
z^{\kappa+\mu-1}\\ 
\hspace{2cm}+ bc_\kappa^{(\mu)} z^{\kappa+\mu} + c_{\kappa+q-1}^{(\mu)} 
z^{\kappa+\mu+q-1})h^\mu f^{(\mu)}(z)\\ 
\kern1cm - (-c_{\kappa-1}^{(\frac{\kappa-2}{2})}z^{\frac{3\kappa-2}{2}} 
+ bc_\kappa^{(\kappa/2)}z^{3\kappa/2} 
+ c_{\kappa+q-1}^{(\kappa/2)}z^{\frac{3\kappa+2q-2}{2}})h^{\kappa/2} 
f^{(\kappa/2)}(z)\\
\kern1cm -
\sum\limits_{\mu=\frac{\kappa+2}{2}}^{\frac{\kappa+q-1}{2}}
c_{\kappa+q-1}^{(\mu)} h^\mu z^{\kappa+\mu+q-1}
f^{(\mu)}(z);&\kern-1cm 2\leq\kappa\leq q-3,\,\kappa\mbox{ even.}
\end{cases} \kern-8pt
\end{multline*}

\subsection{Proof of Lemma~\ref{Lem:DetMod2}}
\label{Subsec:Lem21}
Using Lemma~\ref{Lem:CoeffMod2} plus the facts that $h$ is even and that 
$b$ is odd (recall \eqref{eq:Cqung}), 
we find that, for $0\leq\kappa,\lambda\leq q-2$,
\begin{equation}
\label{Eq:DeltaMod2}
\omega_{\kappa,\lambda} \equiv \left\{\begin{matrix}
                                 1;\hfill&\kappa=\lambda=0\hfill\\[2mm]
                                 z^{\frac{3\kappa-3}{2}};\hfill&
1\leq\kappa\leq q-2,\, \kappa\mbox{ odd},\,
\lambda=\frac{\kappa-1}{2}\hfill\\[2mm]
                                 z^{\frac{3\kappa+3q-6}{2}};\hfill&
1\leq\kappa\leq q-2,\, \kappa\mbox{ odd},\,
\lambda=\frac{\kappa+q-2}{2}\hfill\\[2mm]
                                 z^{\frac{3\kappa-4}{2}};\hfill&
2\leq\kappa\leq q-3,\,\kappa\mbox{
even},\,\lambda=\frac{\kappa}{2}\hfill\\[2mm]
                                 0;\hfill&\mbox{otherwise}\hfill
                                 \end{matrix}\right\}\ \mathrm{mod}\ 2.
\end{equation}
(i) Writing
\begin{equation} \label{eq:detdef}
\det\Delta_q \equiv \sum_{\sigma\in\mbox{\scriptsize
Sym}(\{0,1,\ldots,q-2\})} \omega_{0,\sigma(0)}
\omega_{1,\sigma(1)}\cdots \omega_{q-2,\sigma(q-2)}\ \mathrm{mod}\ 2, 
\end{equation}
we see that the only non-zero contribution on the right-hand side
occurs for the permutation $\sigma$ given by 
\begin{equation} \label{eq:perm} 
\sigma(\kappa) = \begin{cases}
                     \frac{\kappa+q-2}{2};& 1\leq\kappa\leq q-2,\,
\kappa\mbox{ odd}\\[2mm]
                     \frac{\kappa}{2};& 0\leq\kappa\leq q-3,\,
\kappa\mbox{ even.}
                     \end{cases}
\end{equation}
Indeed, we clearly must have $\sigma(0)=0$, which in turn forces
$\sigma(1)=\frac{q-1}{2}$; further, for $\kappa\geq2$ even, we must
have $\sigma(\kappa)=\kappa/2$, and this in turn forces
$\sigma(\kappa)=\frac{\kappa+q-2}{2}$ for $\kappa\geq3$ odd, since the
alternative value $\frac{\kappa-1}{2}$ already occurs as image of the
even number $\kappa-1$. It follows that, modulo~$2$, 
\[
\det\Delta_q \equiv
z^{\frac{3q-3}{2}}\cdot\underset{\kappa\mathrm{\,
odd}}{\prod_{3\leq\kappa\leq q-2}}z^{\frac{3\kappa+3q-6}{2}}\cdot
\underset{\kappa\,\mathrm{even}}{\prod_{2\leq\kappa\leq q-3}}
z^{\frac{3\kappa-4}{2}} \equiv z^e, 
\]
where
\begin{align*}
e &=
\frac{3(q-1)}{2}\,+\,\underset{\kappa\,\mathrm{odd}}{\sum_{3\leq\kappa\leq
q-2}}
\frac{3\kappa+3q-6}{2}\,+\,\underset{\kappa\,\mathrm{even}}
{\sum_{2\leq\kappa\leq
q-3}} \frac{3\kappa-4}{2}\\[2mm] 
&= \frac{3(q-1)}{2}\,+\,\frac{9(q-1)(q-3)}{8}\,+\,
\frac{3(q-1)(q-3)}{8}\,-\,(q-3)\\[2mm]
&= \frac{3q^2-11q+12}{2}.
\end{align*}
The fact that
\[
\det\Delta_q \equiv \det\Delta_{0,0}\ \mathrm{mod}\ 2
\]
follows immediately by expanding $\det\Delta_q$ modulo~$2$ with
respect to the 0-th row.

(ii) For $1\leq\kappa\leq q-2$, we have
\[
\det\Delta_{\kappa,0} \equiv \sum_\sigma
\omega_{0,\sigma(0)}\cdots
\omega_{\kappa-1,\sigma(\kappa-1)}\omega_{\kappa+1,\sigma(\kappa+1)}\cdots
\omega_{q-2,\sigma(q-2)} \equiv 0\ \mathrm{mod}\ 2. 
\]
Here, $\sigma$ runs through all bijections of the set
$\{0,\ldots,\kappa-1,\kappa+1,\ldots,q-2\}$ onto the set
$\{1,2,\ldots,q-2\}$, and $\omega_{0,\sigma(0)}\equiv0\ (\mathrm{mod}\ 2)$ for
each such $\sigma$.\quad \quad \qed

\subsection{Proof of Lemma~\ref{Lem:DetMod2hunger} (sketch)}
\label{Subsec:Lem22}
Using Lemma~\ref{Lem:CoeffMod2} plus the facts that $h$ and $b$ are
odd, we find that, for $0\leq\kappa,\lambda\leq q-2$,
\begin{equation}
\label{Eq:DeltaMod2hunger}
\omega_{\kappa,\lambda} \equiv \left\{\begin{matrix}
                                 1;\hfill&\kappa=\lambda=0\hfill\\[2mm]
                                 z^{\frac{3q-3}{2}};\hfill&\kappa=0,\,
\lambda=\frac{q-3}{2}\text{ even}\hfill\\[2mm]
                                 z^{\frac{3\kappa-7}{2}};\hfill&
3\leq\kappa\leq q-2,\, \kappa\mbox{ odd},\,
\lambda=\frac{\kappa-3}{2}\text{ even}\hfill\\[2mm]
            z^{\frac{3\kappa-5}{2}}+z^{\frac{3\kappa-3}{2}};\hfill&
3\leq\kappa\leq q-2,\, \kappa\mbox{ odd},\,
\lambda=\frac{\kappa-1}{2}\text{ odd}\hfill\\[2mm]
            z^{\frac{3\kappa-3}{2}};\hfill&
1\leq\kappa\leq q-2,\, \kappa\mbox{ odd},\,
\lambda=\frac{\kappa-1}{2}\text{ even}\hfill\\[2mm]
                                 z^{\frac{3\kappa+3q-6}{2}};\hfill&
1\leq\kappa\leq q-2,\, \kappa\mbox{ odd},\,
\lambda=\frac{\kappa+q-2}{2}\hfill\\[2mm]
              z^{\frac{3\kappa-6}{2}}+z^{\frac{3\kappa-4}{2}};\hfill&
2\leq\kappa\leq q-3,\,\kappa\mbox{
even},\,\lambda=\frac{\kappa-2}{2}\text{ even}\hfill\\[2mm]
                                 z^{\frac{3\kappa-4}{2}};\hfill&
2\leq\kappa\leq q-3,\,\kappa\mbox{
even},\,\lambda=\frac{\kappa}{2}\hfill\\[2mm]
                                 z^{\frac{3\kappa+3q-7}{2}};\hfill&
2\leq\kappa\leq q-3,\,\kappa\mbox{
even},\,\lambda=\frac{\kappa+q-3}{2}\text{ even}\hfill\\[2mm]
                                 0;\hfill&\mbox{otherwise}\hfill
                                 \end{matrix}\right\}\ \mathrm{mod}\ 2.
\end{equation}

First suppose that $q\equiv1\ (\mathrm{mod}\ 4)$. For illustration, we display
$\Delta_{13}$ modulo~2:
$$
\setcounter{MaxMatrixCols}{15}
\left(\begin{matrix}
1& 0& 0& 0& 0& 0& 0& 0& 0& 0& 0& 0\\
1& 0& 0& 0& 0& 0& z^{18}& 0& 0& 0& 0& 0\\
1+z& z& 0& 0& 0& 0& z^{19}& 0& 0& 0& 0& 0\\
z& z^2+z^3& 0& 0& 0& 0& 0& z^{21}& 0& 0& 0& 0\\
0& 0& z^4& 0& 0& 0& 0& 0& 0& 0& 0& 0\\
0& 0& z^6& 0& 0& 0& 0& 0& z^{24}& 0& 0& 0\\
0& 0& z^6+z^7& z^7& 0& 0& 0& 0& z^{25}& 0& 0& 0\\
0& 0& z^7& z^8+z^9& 0& 0& 0& 0& 0& z^{27}& 0& 0\\
0& 0& 0& 0& z^{10}& 0& 0& 0& 0& 0& 0& 0\\
0& 0& 0& 0& z^{12}& 0& 0& 0& 0& 0& z^{30}& 0\\
0& 0& 0& 0& z^{12}+z^{13}& z^{13}& 0& 0& 0& 0& z^{31}& 0\\
0& 0& 0& 0& z^{13}& z^{14}+z^{15}& 0& 0& 0& 0& 0& z^{33}
\end{matrix}\right)$$
In the same way as in the proof of the preceding lemma, it can be seen
that there is exactly one permutation in $\mathrm{Sym}(\{0,1,\dots,q-2\})$ 
leading to a non-zero contribution in the expansion \eqref{eq:detdef}
of the determinant
of $\Delta_q$ modulo~2. This permutation is in fact the one in
\eqref{eq:perm}. The rest of the argument is completely analogous to
the one before.

Now let $q\equiv3\ (\mathrm{mod}\ 4)$. We illustrate this situation by displaying
$\Delta_{11}$ modulo~2: 
\begin{equation} \label{eq:det11}
\setcounter{MaxMatrixCols}{15}
\begin{pmatrix}
1& 0& 0& 0& z^{15}& 0& 0& 0& 0& 0\\
1& 0& 0& 0& 0& z^{15}& 0& 0& 0& 0\\
1+z& z& 0& 0& 0& 0& 0& 0& 0& 0\\
z& z^2+z^3& 0& 0& 0& 0& z^{18}& 0& 0& 0\\
0& 0& z^4& 0& 0& 0& z^{19}& 0& 0& 0\\
0& 0& z^6& 0& 0& 0& 0& z^{21}& 0& 0\\
0& 0& z^6+z^7& z^7& 0& 0& 0& 0& 0& 0\\
0& 0& z^7& z^8+z^9& 0& 0& 0& 0& z^{24}& 0\\
0& 0& 0& 0& z^{10}& 0& 0& 0& z^{25}& 0\\
0& 0& 0& 0& z^{12}& 0& 0& 0& 0& z^{27}
\end{pmatrix}.
\end{equation}
Here, there are more permutations leading to non-zero contributions in
the expansion \eqref{eq:detdef}
of $\det\Delta_q$ modulo~2. Namely, there are precisely
two non-zero entries modulo~2 in the $0$-th row, occurring in
columns~0 and $(q-3)/2$. If we decree that $\si(0)=0$, then there is precisely
one way to complete this into a permutation leading to a non-zero
contribution in the expansion of $\det\Delta_q$ modulo~2, namely the
permutation \eqref{eq:perm}. If, on the other hand, we set
$\si(0)=\frac {q-3} {2}$, then the sum of the contributions
corresponding to this subset of permutations is given by the minor of
$\Delta_q$ formed by deleting the $0$-th row and the $\frac {q-3}
{2}$-th column. This minor can be further reduced by observing that
every other column in the right half of the minor contains precisely
one non-zero entry modulo~2. 
(In \eqref{eq:det11}, these are the $5$-th, $7$-th, and $9$-th
column.)
In our running example, 
it is seen in this way that $\Delta_{11}$ modulo~2 equals the
determinant
$$
\setcounter{MaxMatrixCols}{15}
\det\begin{pmatrix}
1+z& z& 0& 0& 0& 0\\
z& z^2+z^3& 0& 0& z^{18}& 0\\
0& 0& z^4& 0& z^{19}& 0\\
0& 0& z^6+z^7& z^7& 0& 0\\
0& 0& z^7& z^8+z^9& 0& z^{24}\\
0& 0& 0& 0& 0& z^{25}\\
\end{pmatrix}.$$
From the form of the above matrix, it is obvious that its determinant is
equal to
$$\det\begin{pmatrix} 
1+z& z\\z& z^2+z^3
\end{pmatrix}\cdot z^{19}
\cdot\det\begin{pmatrix} 
z^6+z^7\\z^7& z^8+z^9
\end{pmatrix}\cdot z^{25}=\pm z^{60}(z^2+2z)^2\equiv z^{64}\ \mathrm{mod}\ 2.$$
In general, this pattern that the minor under consideration can be
expanded as a product of powers of $z$ and $2\times 2$-minors, each
of which contributes only a power of $z$, persists and, in the end,
leads to the result asserted in (i).

The argument for establishing the formulae for the minors is 
similar.\quad \quad \qed

\subsection{Proof of Lemma~\ref{Lem:GenLogDerComp}}
\label{Subsec:Lem23}
We shall use the formula
\begin{equation}
\label{Eq:Bell}
\frac{d^\nu}{dz^\nu} A(B(z)) = 
\underset{{\pi_1+2\pi_2+\dots+\nu \pi_\nu=\nu}}
{\sum_{\pi_1,\dots,\pi_\nu\ge0}}
\frac{\nu!}{\prod_{j\geq1} (j!)^{\pi_j} \pi_j!}
\,\bigg[\prod_{j\geq1}\big(B^{(j)}(z)\big)^{\pi_j}\bigg]
\,A^{(\pi_1+\dots+\pi_\nu)}(B(z))
\end{equation}
for the derivatives of a composite function, which is known as
the Fa\`a di Bruno formula (cf.\ \cite[Sec.~3.4]{ComtAA};
but see also \cite{CraiAA,JohWAE}). In view of the left-hand side, 
Formula~\eqref{Eq:Bell} implies in particular the integrality assertion 
in Item~(i). In fact, the coefficients occurring in (\ref{Eq:Bell}) have 
a natural combinatorial interpretation (see \cite[Theorem~13.2]{Andrews}). 

Applying (\ref{Eq:Bell})
with $A(t)=e^t$ and $B(z) = h^{-1}\int\mathcal{S}_\Gamma^H(z)\,dz$,
Equation~(\ref{Eq:HigherDerHComp}) follows immediately in view of
(\ref{Eq:WreathTransform}).

Finally, using the facts that
$\big(\mathcal{S}_\Gamma^H(z)\big)^{(j)}\equiv0\ (\mathrm{mod}\ 2)$ for $j\geq2$,
and that 
\[
\frac{\nu!}{(\nu-2\mu)! 2^\mu \mu!} =
\binom{\nu}{2\mu}\cdot\prod_{1\leq k\leq\mu}(2k-1) \equiv
\binom{\nu}{2\mu}\ \mathrm{mod}\ 2, 
\]
we find from (\ref{Eq:HigherDerHComp}) that, modulo~$2$,
\begin{align*}
h^\nu \Big(\frac{d^\nu}{dz^\nu}
\mathcal{H}_\Gamma^H(z)\Big)\Big/\mathcal{H}_\Gamma^H(z) &\equiv
\underset{\pi_1+2\pi_2=\nu}{\sum_{\pi_1,\pi_2\ge0}} h^{\nu-\pi_1-\pi_2}
\frac{\nu!}{\prod_{j=1}^2(j!)^{\pi_j} \pi_j!}
\prod_{j=1}^2\Big(\big(\mathcal{S}_\Gamma^H(z)\big)^{(j-1)}\Big)^{\pi_j}
\\[2mm]
&\equiv \sum_{\mu=0}^{\lfloor\frac{\nu}{2}\rfloor} h^\mu
\binom{\nu}{2\mu} \Big(\big(\mathcal{S}_\Gamma^H(z)\big)'\Big)^\mu
\big(\mathcal{S}_\Gamma^H(z)\big)^{\nu-2\mu}, 
\end{align*}
in accordance with (\ref{Eq:HigherDerHCompMod2}). Our last claim
(\ref{Eq:HigherDerHCompMod2Heven}) is an immediate consequence of
(\ref{Eq:HigherDerHCompMod2}).\quad \quad \qed

\end{document}